\def\N{\mathbb{N}}
\def\R{\mathbb{R}}
\def\C{C^{\infty}(M,\mathbb{R})}
\def\lcf{\lbrack\! \lbrack}
\def\rcf{\rbrack\! \rbrack}
\newtheorem{definition}{Definition}[section]
\newtheorem{proposition}[definition]{Proposition}
\newtheorem{theorem}[definition]{Theorem}
\newtheorem{remark}[definition]{Remark}
\newtheorem{corol}[definition]{Corollary}
\newenvironment{proof}{\noindent{\bf Proof.}}{\hfill $\blacklozenge$}
\begin{document}

\title{On twisted contact groupoids and on integration of twisted Jacobi manifolds}
\author{Fani Petalidou\\ \emph{Department of Mathematics and Statistics}
\\ \emph{University of Cyprus} \\ \emph{1678 Nicosia, Cyprus} \\ \vspace{8mm} {\small  \emph{E-mail: petalido@ucy.ac.cy}}}

\date{}
\maketitle

\begin{abstract}
We introduce the concept of twisted contact groupoids, as an extension either of contact groupoids or of twisted symplectic ones, and we discuss the integration of twisted Jacobi manifolds by twisted contact groupoids. We also investigate the very close relationships which link homogeneous twisted Poisson manifolds with twisted Jacobi manifolds and homogeneous twisted symplectic groupoids with twisted contact ones. Some examples for each structure are presented.

\vspace{3mm}
\begin{center}
\textbf{R\'esum\'e}
\end{center}

On introduit le concept des groupo\"ides de contact tordus, comme une extension \`a la fois des groupo\"ides de contact et des groupo\"ides symplectiques tordus, et on discute l' int\'egration des vari\'et\'es de Jacobi tordues par groupo\"ides de contact tordus. En plus, on \'etablit quelques relations \'etroites qui unissent les vari\'et\'es de Poisson homog\`enes tordues avec celles de Jacobi tordues et les groupo\"ides homog\`enes symplectiques tordus avec ceux de contact tordus. Des exemples pour chaque structure \'etudi\'ee sont pr\'esent\'es.
\end{abstract}

\vspace{3mm} \noindent {\bf{Keywords: }}{Twisted contact groupoid, homogeneous twisted symplectic groupoid, twisted Jacobi manifold.}

\vspace{3mm} \noindent {\bf{A.M.S. classification (2000):}} 53D10, 53D17, 58Hxx, 53C15.

\section{Introduction}
The notion of \emph{twisted Jacobi manifolds}, introduced by J.M. Nunes da Costa and the author of this present paper in \cite{jf}, is a weakened version of the notion of \emph{Jacobi mani\-folds} due to A. Lichnerowicz \cite{lch} which, at the same time, generalizes in a natural manner that of \emph{twisted Poisson manifolds} studied by P. \v{S}evera and A. Weinstein \cite{sw}. The latter appeared in Park's work on string theory \cite{p}, as well as in the work \cite{kl} on topological field theory by C. Klim\v{c}\'{i}k and T. Str\"{o}bl, and are also relevant in the theory of nonholonomic systems \cite{hgn}. In fact, the associated bracket on the space of smooth functions on the manifold does not satisfy the Jacobi identity whose failure is controlled by a generalized closed $3$-form. Keeping in mind that the Jacobi structures play a central role in the geometric prequantization of Poisson structures \cite{ch-mar-leo, zz, cr-zhu}, one of the motivations behind the study of twisted Jacobi structures is the likely role that they can play in some geometric prequantization process of twisted Poisson structures.

However, the prequantization of Poisson manifolds is also closely related to the integrability of Jacobi structures \cite{cr-zhu}. For this reason, the present paper is concerned with the search of the global geometric objects that will integrate twisted Jacobi structures.

The corresponding problem for Jacobi manifolds was treated, independently, by P. Libermann \cite{lib3}, C. Albert \cite{alb} and Y. Kerbrat and Z. Souici-Benhammadi \cite{krb}. For its approach, they introduced various concepts of \emph{contact groupoids}, as odd-dimensional counterparts of symplectic groupoids \cite{w-syml-gr}. In \cite{dz}, P. Dazord unified and clarified these concepts and he showed contact groupoids as the central tool for integrating local Lie algebras (it is well known that they are intimately connected with Jacobi structures \cite{g-lch}) and prequantizing Poisson manifolds in Weinstein's sense. While, in \cite{cr-zhu}, M. Crainic and Ch. Zhu discussed this problem using the method of $A$-paths. The analogous question for twisted Poisson manifolds was studied by A.S. Cattaneo and P. Xu \cite{cx} and, in the general context of twisted Dirac structures, by H. Bursztyn \textit{et al.} \cite{bur-al}. They presented the notion of \emph{twisted (pre)symplectic groupoids} as a natural extension of the one of symplectic groupoids \cite{w-syml-gr} and they proved that the twisted Poisson manifolds may be regarded as the infinitesimal form of twisted symplectic groupoids.

Inspired by the above studies, we introduce the concept of \emph{twisted contact groupoids}, we establish their fundamental properties (Proposition \ref{propert-propos}), and we prove that these are the global counterparts of twisted Jacobi manifolds (Theorems \ref{th-ind-jac} and \ref{integr-tj}). For this, we use the bijection which exists between twisted contact groupoids and \emph{homogeneous twisted symplectic ones} (Proposition \ref{prop-tcg-tsg}). The latter are a special case of twisted symplectic groupoids and they integrate \emph{homogeneous twisted Poisson manifolds} that are linked to twisted Jacobi manifolds with a very close relationships (Propositions \ref{poissonization} and \ref{th-tj-tp}).

The paper is organized as follows. In Section \ref{sect-tj} we review basic definitions and results concerning twisted Jacobi manifolds and homogeneous twisted Poisson ones and we give some examples of twisted contact structures. In Section \ref{sect-group}, we introduce the notion of contact groupoids, we describe their basic properties and their connection with the homogeneous twisted symplectic groupoids. Finally, we present some examples of such groupoids. In Section \ref{sect-integr}, after a brief presentation of the integrability problem of a Lie algebroid and of its solution given by M. Crainic and R.L. Fernandes \cite{cr-fer-Lie}, we prove our main theorem: \emph{An integrable twisted Jacobi manifold is integrated by a twisted contact groupoid.}

\vspace{2mm}
\noindent
\textbf{Notation:} In order to present the concepts of \emph{twisted Jacobi} and \emph{homogeneous twisted Poisson} structures on a smooth manifold $M$, we recall that, given a bivector field $\Lambda$ on $M$, the usual homomorphism of $\C$-modules $\Lambda^{\#}
: \Gamma(T^*M)\to \Gamma(TM)$, defined, for all $\zeta,\eta \in \Gamma(T^*M)$, by $\langle \eta,\Lambda^{\#}(\zeta)\rangle =
\Lambda(\zeta,\eta)$, can be extended to a homomorphism, also denoted $\Lambda^{\#}$, from $\Gamma(\bigwedge^kT^*M)$ to $\Gamma(\bigwedge^kTM)$, $k\in \N$, by setting, for any $f\in \C$, $\Lambda^{\#}(f)=f$, and, for all $\zeta \in
\Gamma(\bigwedge^k T^*M)$ and $\alpha_1,\ldots,\alpha_k \in
\Gamma(T^*M)$,
\begin{equation}\label{formule-homo}
\Lambda^{\#}(\zeta)(\alpha_1,\ldots,\alpha_k ) =
(-1)^k\zeta(\Lambda^{\#}(\alpha_1),\ldots,\Lambda^{\#}(\alpha_k
)).
\end{equation}
Moreover, for any $(\Lambda,E)\in \Gamma(\bigwedge^2(TM\times \R))$, the homomorphism of $\C$-modules
$(\Lambda,E)^{\#} : \Gamma(T^*M\times \R) \to \Gamma(TM \times \R)$ given,
for any $(\zeta,f)\in \Gamma(T^*M\times \R)$, by $(\Lambda,E)^{\#}(\zeta,f) = (\Lambda^{\#}(\zeta)+fE, -\langle \zeta, E\rangle)$, can be extended to a homomorphism $(\Lambda,E)^{\#}:\Gamma(\bigwedge^k(T^*M\times \R)) \to \Gamma(\bigwedge^k(TM\times \R))$ by setting, for all $(\zeta,\zeta') \in \Gamma
(\bigwedge^k( T^*M \times \R))$ and $(\alpha_1,f_1),
\ldots,(\alpha_k,f_k)\in \Gamma(T^*M \times \R)$,
\begin{equation*}
(\Lambda,E)^\#(\zeta,\zeta')((\alpha_1,f_1), \ldots,
(\alpha_k,f_k))
=(-1)^k(\zeta,\zeta')((\Lambda,E)^\# (\alpha_1,f_1), \ldots ,
(\Lambda,E)^\# (\alpha_k,f_k)).
\end{equation*}
We indicate \cite{im}, for any $(X_1,f_1),\ldots,(X_k,f_k) \in \Gamma(\bigwedge^k(TM\times \R))$,
\begin{equation*}
(\zeta,\zeta')((X_1,f_1), \ldots ,
(X_k,f_k)) = \zeta(X_1, \ldots,X_k) + \sum_{i=1}^k (-1)^{i+1}f_i \zeta' (X_1,\ldots, \hat{X}_i,\ldots,X_k),
\end{equation*}
where the hat denotes missing arguments. Also, following \cite{sw}, we denote by $(\Lambda^{\#}\otimes
1)(\zeta)$ the section of $(\bigwedge^{k-1}TM)\otimes T^*M$ that
acts on multivector fields by contraction with the factor in
$T^*M$. Precisely, for all $X\in \Gamma(TM)$ and $\alpha_1,\ldots,
\alpha_{k-1} \in \Gamma(T^*M)$,
\begin{equation*}\label{rel-otimes}
(\Lambda^{\#}\otimes 1)(\zeta)(\alpha_1,\ldots, \alpha_{k-1})(X) =
(-1)^k\zeta(\Lambda^{\#}(\alpha_1),\ldots,\Lambda^{\#}(\alpha_{k-1}),X).
\end{equation*}

\section{Twisted Jacobi and homogeneous twisted Poisson mani\-folds}\label{sect-tj}
A \emph{twisted Jacobi manifold} is a smooth manifold $M$ endowed with a bivector field
$\Lambda$, a vector field $E$ and a $2$-form $\omega$ such that
\begin{equation}\label{def-tj}
\frac{1}{2}[(\Lambda,E),(\Lambda,E)]^{(0,1)}=(\Lambda,E)^{\#}(d\omega,\omega).
\end{equation}
The bracket on the left hand side is the Schouten
bracket of the Lie algebroid $(TM\times \R, [\cdot,\cdot], \pi)$
over $M$ modified by the 1-cocycle $(0,1)$ of its Lie algebroid
cohomology complex with trivial coefficients \cite{im}. The pair $(d\omega,\omega)\in \Gamma(\bigwedge^3(T^*M\times \R))$ can be viewed as a closed $3$-form of $TM\times \R$ with respect to the exterior derivative operator on $\Gamma(\bigwedge(T^*M\times \R))$ defined by $([\cdot,\cdot],\pi)$ and modified by $(0,1)$ \cite{im}. Writing (\ref{def-tj}) in terms of the usual Schouten bracket, we obtain \cite{jf} its equivalent expression
\begin{equation}\label{equiv-tj}
\left\{
\begin{array}{l}
\frac{1}{2}[\Lambda,\Lambda] + E \wedge \Lambda = \Lambda^{\#}(d\omega)+
\Lambda^{\#}(\omega)\wedge E  \\
\\

[E,\Lambda] = (\Lambda^{\#} \otimes 1)(d\omega)(E)- (( \Lambda^{\#}
\otimes 1)(\omega)(E)) \wedge E.
\end{array}
\right.
\end{equation}

As explained in \cite{jf}, the space $\C$ of smooth functions on $(M,\Lambda,E,\omega)$ is equipped, just as in the case of ordinary Jacobi manifolds, with the internal composition law
\begin{equation}\label{br-j}
\{f,g\} = \Lambda(df,dg) + \langle fdg-gdf, E\rangle, \quad \quad \; f,g\in \C,
\end{equation}
that is bilinear and skew-symmetric but its Jacobi identity acquires an extra term:
\begin{equation}\label{jac-jac}
\{f,\{g,h\}\} + c.p. = (\Lambda,E)^{\#}(d\omega,\omega)((df,f),(dg,g),(dh,h)),  \quad f,g,h\in \C.
\end{equation}
So, (\ref{br-j}) is no more a Lie bracket on $\C$. However, $(\Lambda,E,\omega)$ produces a Lie algebroid
structure $(\{\cdot,\cdot\}^{\omega}, \pi \circ (\Lambda,E)^\#)$
on the vector bundle $T^*M\times \R \to M$. The
bracket on the space $\Gamma(T^*M\times \R)$ of smooth sections of $T^*M\times \R$ is
given, for all $(\zeta,f), (\eta,g)\in \Gamma(T^*M\times \R)$,
by
\begin{equation}\label{croch-form}
\{(\zeta,f),(\eta,g) \} ^{\omega} = \{(\zeta,f),(\eta,g)\} +
(d \omega,\omega)((\Lambda,E)^\# (\zeta,f), (\Lambda,E)^\#
(\eta,g), \cdot),
\end{equation}
$\{\cdot,\cdot\}$ being the Kerbrat-Souici-Benhammadi
bracket \cite{krb} on $\Gamma(T^*M\times \R)$, and the anchor map is $\pi \circ
(\Lambda,E)^\#$, where $\pi :TM\times \R \to TM$ denotes the
projection on the first factor. The section $(-E,0)$ of $TM\times \R$ is a $1$-cocycle of the Lie algebroid cohomology complex with trivial coefficients \cite{im} of $(T^*M\times \R,\{\cdot,\cdot\}^{\omega}, \pi \circ (\Lambda,E)^\#)$. The brackets (\ref{br-j}) and (\ref{croch-form}) are related by
\begin{equation*}
\{(df,f),(dg,g) \} ^{\omega} = (d\{f,g\},\{f,g\}) + (d \omega,\omega)((\Lambda,E)^\# (df,f), (\Lambda,E)^\#
(dg,g), \cdot),
\end{equation*}
whence we conclude that the mapping $f \mapsto X_f =\Lambda^\#(df)+fE = \{f,\cdot \} + \langle df,E\rangle$ from functions to their hamiltonian vectors fields is no longer a homomorphism.

\vspace{2mm}

If $E=0$, equations (\ref{equiv-tj}) are reduced to $\frac{1}{2}[\Lambda,\Lambda]=\Lambda^{\#}(d\omega)$, which means that $(\Lambda,d\omega)$ defines an exact twisted Poisson structure \cite{sw} on $M$.

\vspace{2mm}

Two characteristic examples of twisted Jacobi manifolds are the following.

\vspace{2mm}

\noindent
\emph{\textbf{Conformal twisted Jacobi manifolds (\cite{jf}):}} Let $(M,\Lambda,E,\omega)$ be a twisted Jacobi manifold and $a$ an element of $\C$ that never vanishes on $M$. We set
\begin{equation*}
\Lambda^a=a\Lambda, \quad  E^a = \Lambda^{\#}(da)+aE \quad  \mathrm{and} \quad \omega^a=\frac{1}{a}\omega.
\end{equation*}
Then, $(\Lambda^a,E^a,\omega^a)$ defines a new twisted Jacobi structure on $M$ that is called $a$-\emph{conformal} to the initially given one. Its associated bracket (\ref{br-j}) in $\C$ is given by
\begin{equation*}
\{f,g\}^a = \frac{1}{a}\{af,ag\}, \quad \quad f,g \in \C.
\end{equation*}

\vspace{2mm}
\noindent
\emph{\textbf{Twisted contact manifolds (\cite{jf-ten}):}} A \emph{twisted contact manifold} is a $2n+1$-dimensional smooth manifold $M$ equipped with an $1$-form $\vartheta$ and a $2$-form $\omega$ such that $\vartheta\wedge (d\vartheta + \omega)^n\neq 0$, everywhere in $M$. The Reeb vector field $E$ on $M$, defined by
\begin{equation}\label{reeb}
i(E)\vartheta =1 \;\;\; \mathrm{and} \;\;\; i(E)(d\vartheta
+\omega)=0,
\end{equation}
and the bivector field $\Lambda$ on $M$ whose associated morphism
$\Lambda^{\#}$ is given by
\begin{equation}\label{bivect-reeb}
\Lambda^{\#}(\vartheta)=0\quad \mathrm{and}
\quad i(\Lambda^{\#}(\zeta))(d\vartheta + \omega) = -(\zeta-
\langle \zeta,E\rangle \, \vartheta), \;\; \mathrm{for} \;\;\mathrm{all} \;\;\zeta\in\Gamma(T^*M),
\end{equation}
yield a $(d\omega,\omega)$-twisted Jacobi structure on $M$.

\vspace{2mm}
The above definition includes that $M$ is an orientable manifold and $\vartheta\wedge (d\vartheta + \omega)^n$ is a volume form on $M$. Thus, $\ker \vartheta$ and $\ker (d\vartheta + \omega)$ are complementary subbundles of $TM$, where $\ker (d\vartheta + \omega)$, called the \emph{vertical bundle}, is of rank 1 and is generated by $E$, however, $\ker \vartheta = \mathrm{Im}\Lambda^\#$, called the \emph{horizontal bundle} and denoted by $\mathcal{H}$, is of rank $2n$. If $M$ is not orientable, a twisted contact structure on $M$ cannot be defined by a single pair $(\vartheta,\omega)$ defined on the whole of $M$. It is described by an open covering $U_i$, $i\in I$, of $M$ such that each $U_i$ is endowed with a twisted contact structure $(\vartheta_i,\omega_i)$ and on the overlaps $U_i\cap U_j$ there exist nowhere vanishing functions $f_{ij}$ satisfying $\vartheta_i = f_{ij}\vartheta_j$ and $\omega_i = f_{ij}\omega_j$. Hence, the family $(U_i,\vartheta_i,\omega_i)$, $i\in I$, defines a locally conformal twisted contact structure on $M$. Of course, in this case, the above decomposition of $TM$ holds locally.

\begin{remark}\label{remark-alm-cosympl}
\emph{We recall that an \emph{almost cosymplectic structure} \cite{lib1, lch} on a $2n+1$-dimensional smooth manifold $M$ is defined by a pair $(\vartheta, \Theta)$, where $\vartheta$ is a $1$-form and $\Theta$ is a $2$-form on $M$, such that $\vartheta \wedge \Theta^n \neq 0$ everywhere on $M$. So, a twisted contact structure $(\vartheta,\omega)$ on $M$ can be viewed as an almost cosymplectic for which $\Theta = d\vartheta + \omega$. Reciprocally, each almost cosymplectic manifold $(M,\vartheta, \Theta)$ can be considered as twisted contact with $\omega = \Theta - d\vartheta$.}
\end{remark}

\noindent
\emph{\textbf{Some examples of twisted contact manifolds:}} In \cite{lib2}, P. Libermann gave a list of examples of almost cosymplectic structures: (i) on the sphere $S^5$ by embedding it in the almost complex (so, almost symplectic) manifold $S^6$ and (ii) on the quadrics homeomorphic to $S^1 \times \R^4$, $S^2 \times \R^3$ and $S^3 \times \R^2$ by embedding they in the quadrics homeomorphic to $S^2 \times \R^4$ and $S^3 \times \R^3$ which are also endowed with almost complex structures. From Remark \ref{remark-alm-cosympl}, it is clear that the almost cosymplectic structures obtained on $S^5$ and on the $5$-dimensional quadratics can be viewed as twisted contact.

\vspace{3mm}
Let $(M_1,\Lambda_1,E_1,\omega_1)$ and $(M_2,\Lambda_2,E_2,\omega_2)$ be two twisted Jacobi manifolds. A smooth map $\psi : M_1 \to M_2$ is called \emph{twisted Jacobi map} if it satisfies the following conditions
\begin{equation*}
\Lambda_2 = \psi_*\Lambda_1, \quad  E_2 = \psi_*E_1 \quad  \mathrm{and} \quad \omega_1 = \psi^*\omega_2.
\end{equation*}
Moreover, we say that $\psi : M_1 \to M_2$ is a \emph{conformal twisted Jacobi map} if there exists a non vanishing smooth function $a$ on $M_1$ such that $\psi$ is a twisted Jacobi map between $(M_1,\Lambda_1^a,E_1^a, \omega_1^a)$ and $(M_2,\Lambda_2,E_2,\omega_2)$.

\vspace{3mm}

A \emph{homogeneous twisted Poisson manifold} \cite{jf} is an exact twisted Poisson manifold $(M,\Lambda,d\omega)$ \cite{sw},
\begin{equation*}
\frac{1}{2}[\Lambda, \Lambda] = \Lambda^\#(d\omega),
\end{equation*}
endowed with a vector field $Z$, called the \emph{homothety vector field of} $M$, such that
\begin{equation}\label{def-homogeneous}
\mathcal{L}_Z\Lambda = - \Lambda \quad \mathrm{and} \quad \omega = i(Z)d\omega.\footnote{The natural definition of a \emph{homogeneous twisted Poisson manifold} would be a twisted Poisson manifold $(M,\Lambda,\varphi)$ \cite{sw} with a vector field $Z$ which would satisfy the equations $\mathcal{L}_Z\Lambda = - \Lambda$ and $\mathcal{L}_Z\varphi = \varphi$. But, $\mathcal{L}_Z\varphi = \varphi \Leftrightarrow d(i(Z)\varphi) = \varphi$, whence we get that, in the homogeneous case, $\varphi$ is exact, i.e., $\varphi = d\omega$ with $\omega = i(Z)\varphi$. Thus, $\omega = i(Z)d\omega \Leftrightarrow \omega=\mathcal{L}_Z\omega$ and $i(Z)\omega = 0$.}
\end{equation}

Now, we establish a one to one correspondence between twisted Jacobi and homogeneous twisted Poisson manifolds, as in the non-twisted framework \cite{dlm}.

\begin{proposition}\label{prop-htp-tj}
Let $(M,\Lambda,d\omega,Z)$ be a homogeneous twisted Poisson manifold and $M_0$ a $1$-codimensional submanifold of $M$ transverse to $Z$. Then, $M_0$ receives an induced twisted Jacobi structure $(\Lambda_0,E_0, \omega_0)$ characterized by one of the following properties:
\begin{enumerate}
\item
For any pair $(f,g)$ of homogeneous functions of degree $1$ with respect to $Z$, defined on an open $\mathcal{O}$ of $M$, the bracket $\{f,g\}$ is also a homogeneous function of degree 1 with respect to $Z$.
\item
Let $\varpi : U\to M_0$ be the projection, along the integral curves of $Z$, of a tubular neighborhood $U$ of $M_0$ in $M$ onto $M_0$ and $a$ a function on $U$ that is equal to $1$ on $M_0$ and homogeneous of degree $1$ with respect to $Z$. Then, the projection $\varpi$ is an $a$-conformal twisted Jacobi map.
\end{enumerate}
\end{proposition}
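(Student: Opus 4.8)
\noindent The plan is to transport the Poissonization correspondence of Dazord--Lichnerowicz--Marle \cite{dlm} to the twisted setting. Since $Z$ is transverse to $M_0$, its flow identifies a tubular neighborhood $U$ of $M_0$ with $M_0\times I$, $I$ an open interval around $0$, so that $M_0$ becomes $M_0\times\{0\}$, $Z$ becomes $\partial/\partial t$, and $\varpi$ becomes the first projection. In these coordinates $a=e^{t}$ is, up to the obvious ambiguity, the function with $a|_{M_0}=1$ and $\mathcal{L}_Z a=a$, and a function $F$ on $U$ is homogeneous of degree $1$, i.e. $Z(F)=F$, exactly when $F=a\,\varpi^{*}(F|_{M_0})$; thus restriction to $M_0$ identifies the degree-$1$ functions on $U$ with $C^{\infty}(M_0,\R)$. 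Writing $\Lambda|_U=\Lambda'+\partial_t\wedge E'$ with $\Lambda'$, $E'$ free of $\partial_t$, the homogeneity $\mathcal{L}_Z\Lambda=-\Lambda$ forces $\Lambda'=a^{-1}\Lambda_0$ and $E'=a^{-1}E_0$ with $\Lambda_0$, $E_0$ pulled back from $M_0$; likewise $i(Z)\omega=0$ and $\mathcal{L}_Z\omega=\omega$ force $\omega|_U=a\,\varpi^{*}\omega_0$. This produces the candidate triple $(\Lambda_0,E_0,\omega_0)$, with $\omega_0$ the pullback of $\omega$ to $M_0$.

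I would then verify property $2$. From $\Lambda|_U=a^{-1}(\Lambda_0+\partial_t\wedge E_0)$ and $da=a\,dt$ one gets $a\Lambda=\Lambda_0+\partial_t\wedge E_0$, hence $\varpi_*(a\Lambda)=\Lambda_0$; also $\Lambda^{\#}(da)=(\Lambda_0+\partial_t\wedge E_0)^{\#}(dt)=E_0$ (as $\Lambda_0^{\#}(dt)=0$ and $(\partial_t\wedge E_0)^{\#}(dt)=E_0$), hence $\varpi_*\big(\Lambda^{\#}(da)\big)=E_0$; and $\tfrac1a\omega=\varpi^{*}\omega_0$. These are precisely the identities saying that $\varpi$ is a twisted Jacobi map from $(U,a\Lambda,\Lambda^{\#}(da),\tfrac1a\omega)$ to $(M_0,\Lambda_0,E_0,\omega_0)$, i.e. an $a$-conformal twisted Jacobi map --- provided the target is genuinely a twisted Jacobi manifold. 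To secure that, note that $(U,\Lambda|_U,0,\omega|_U)$ is twisted Jacobi, being the restriction of $(\Lambda,0,\omega)$ on $M$ (which satisfies (\ref{equiv-tj}): with $E=0$ the first equation is the exact twisted Poisson condition $\tfrac12[\Lambda,\Lambda]=\Lambda^{\#}(d\omega)$ and the second is trivial), so its $a$-conformal deformation $(U,a\Lambda,\Lambda^{\#}(da),\tfrac1a\omega)$ is twisted Jacobi by the \emph{Conformal twisted Jacobi manifolds} construction; since its three tensors are $\varpi$-projectable and $\varpi$ is a surjective submersion with connected fibres, equation (\ref{def-tj}) descends to $M_0$ by naturality of the Schouten bracket, of $(\Lambda,E)^{\#}$ and of the exterior derivative on $\Gamma(\bigwedge(T^*M\times\R))$ under $\varpi_*$ and $\varpi^{*}$. (Alternatively one substitutes the explicit forms into $\tfrac12[\Lambda,\Lambda]=\Lambda^{\#}(d\omega)$ on $U$ and reads off the equations (\ref{equiv-tj}) for $(\Lambda_0,E_0,\omega_0)$ by a direct Schouten-bracket computation.) Uniqueness under property $2$ is then immediate, as $\varpi_*$ and the injectivity of $\varpi^{*}$ fix all three tensors.

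For property $1$: for $f,g$ homogeneous of degree $1$ on an open subset of $M$, the bracket (\ref{br-j}) on $M$ reads $\{f,g\}=\Lambda(df,dg)$ (there $E=0$), and using $\mathcal{L}_Z\Lambda=-\Lambda$ and $\mathcal{L}_Z(df)=d(Z(f))=df$ one computes $Z(\{f,g\})=(\mathcal{L}_Z\Lambda)(df,dg)+\Lambda(\mathcal{L}_Zdf,dg)+\Lambda(df,\mathcal{L}_Zdg)=-\{f,g\}+\{f,g\}+\{f,g\}=\{f,g\}$, so $\{f,g\}$ is again homogeneous of degree $1$. Hence, via the identification of the first paragraph, the Poisson bracket of $M$ restricts to a bracket on $C^{\infty}(M_0,\R)$; unwinding that $\varpi$ is an $a$-conformal twisted Jacobi map (with $\{h,k\}^{a}=\tfrac1a\{ah,ak\}$ and $a|_{M_0}=1$) shows this restricted bracket coincides with the bracket (\ref{br-j}) of $(\Lambda_0,E_0,\omega_0)$, namely $\{h,k\}_0=\{\widetilde h,\widetilde k\}|_{M_0}$ where $\widetilde h=a\,\varpi^{*}h$ is the degree-$1$ extension of $h$. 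Since (\ref{br-j}) shows this bracket determines $(\Lambda_0,E_0)$ while $\omega_0$ is forced to be $\omega|_{M_0}$ (meaningful because $i(Z)\omega=0$), properties $1$ and $2$ pick out one and the same twisted Jacobi structure.

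I expect the crux to be exactly the step certifying that the descended data satisfy the twisted Jacobi equations (\ref{equiv-tj}): whichever route one takes --- the naturality/descent of the Schouten calculus on $TM\times\R$ along $\varpi$, or the explicit computation from $\tfrac12[\Lambda,\Lambda]=\Lambda^{\#}(d\omega)$ --- the delicate part is the bookkeeping of homogeneity weights ($\Lambda$ and the $E$-component of weight $-1$, $\omega$ and $d\omega$ of weight $+1$) so that the conformal factors $a^{\pm1}$ cancel and a genuinely basic structure survives on $M_0$.
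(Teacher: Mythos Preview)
Your proposal is correct and follows essentially the same strategy as the paper: regard $(\Lambda,0,\omega)$ as a twisted Jacobi structure, pass to its $a$-conformal $(a\Lambda,\Lambda^{\#}(da),a^{-1}\omega)$, observe that all three tensors are $Z$-invariant and hence $\varpi$-projectable, and push down the twisted Jacobi equations to $M_0$. The only differences are presentational: you work in explicit flow coordinates $U\cong M_0\times I$ with $a=e^t$ and decompose $\Lambda$ along $\partial_t$, whereas the paper argues projectability via $\mathcal{L}_Z(a\Lambda)=0$, $\mathcal{L}_Z(\Lambda^{\#}(da))=0$, $\mathcal{L}_Z\omega^a=0$ without fixing coordinates; and for property~1 you use the Leibniz rule for $\mathcal{L}_Z$ on $\Lambda(df,dg)$ directly, whereas the paper expands $\{a\varpi^*f_0,\,a\varpi^*g_0\}$ to obtain the sharper identity $\{f,g\}=a\,\varpi^*\{f_0,g_0\}_0$ in one stroke.
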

\begin{proof}
We have that any homogeneous function $f\in C^\infty(U,\R)$ of degree $1$ with respect to $Z$ is of type $f=a\varpi^*f_0$, where $f_0\in C^{\infty}(M_0,\R)$. Hence, $df = a\varpi^*df_0 + \varpi^*f_0da$. Let $(f,g)$ be a pair of homogeneous functions of degree $1$ with respect to $Z$, defined on an open $\mathcal{O}\subset U$ of $M$, and $(f_0,g_0)$ the corresponding pair of functions on $M_0$, i.e., $f=a\varpi^*f_0$ and $g=a\varpi^*g_0$. Then,
\begin{eqnarray}\label{eq-2}
\{f,g\} & = & \Lambda(df,dg) = \Lambda(a\varpi^*df_0 + \varpi^*f_0da, a\varpi^*dg_0 + \varpi^*g_0da) \nonumber \\
        & = & a^2\Lambda(\varpi^*df_0,\varpi^*dg_0) + a\Lambda(\varpi^*df_0, \varpi^*g_0da) + a\Lambda(\varpi^*f_0da, \varpi^*dg_0).
\end{eqnarray}
Since $\mathcal{L}_Z(a\Lambda)=0$ and $\mathcal{L}_Z(\Lambda^{\#}(da))=0$, the tensors fields $a\Lambda$ and $\Lambda^{\#}(da)$ are projectable along the integral curves of $Z$. Let $\Lambda_0 = \varpi_*(a\Lambda)$ and $E_0 = \varpi_*(\Lambda^{\#}(da))$ be their projections and $\{\cdot,\cdot\}_0$ the bracket (\ref{br-j}) defined by $(\Lambda_0,E_0)$. Thus, (\ref{eq-2}) can be written as
\begin{equation*}
\{f,g\}  =  a \varpi^*\big(\Lambda_0(df_0,dg_0) + \langle f_0dg_0 - g_0df_0, E_0\rangle \big) = a \varpi^*\{f_0,g_0\}_0,
\end{equation*}
which means that $\{f,g\}$ is also homogeneous with respect to $Z$.

We regard $(\Lambda, d\omega)$ as the twisted Jacobi structure $(\Lambda,0,\omega)$ on $M$ and we consider its $a$-conformal structure $(\Lambda^a,E^a,\omega^a)$, $\Lambda^a =a\Lambda$, $E^a = \Lambda^{\#}(da)$ and $\omega^a = \frac{1}{a}\omega$, which is also twisted Jacobi. Thus, the following equations hold.
\begin{equation}\label{syst-1}
\left\{
\begin{array}{l}
\frac{1}{2}[\Lambda^a,\Lambda^a] + E^a \wedge \Lambda^a = (\Lambda^a)^{\#}(d\omega^a)+
(\Lambda^a)^{\#}(\omega^a)\wedge E^a  \\
\\

[E^a,\Lambda^a] = ((\Lambda^a)^{\#} \otimes 1)(d\omega^a)(E^a)- (( (\Lambda^a)^{\#}
\otimes 1)(\omega^a)(E^a)) \wedge E^a.
\end{array}
\right.
\end{equation}
Since $i(Z)d\omega =\omega$ and $\mathcal{L}_Za=a$, we have $\mathcal{L}_Z\omega^a=0$ and $i(Z)\omega^a=0$, which mean that $\omega^a$ is projectable along the integral curves of $Z$. Let $\omega_0$ be the $2$-form on $M_0$ for which $\omega^a = \varpi^*\omega_0$. So, $d\omega^a = \varpi^*d\omega_0$. Also, the equation $[Z,[\Lambda^a,\Lambda^a]]=0$ means that $[\Lambda^a,\Lambda^a]$ is projectable onto $M_0$ with respect to $Z$. Its projection is the Schouten bracket of the projection of $\Lambda^a$ with itself. Hence, by projecting the system (\ref{syst-1}) along the integral curves of $Z$ and taking into account that $\Lambda_0 = \varpi_*(a\Lambda)$ and $E_0 = \varpi_*(\Lambda^{\#}(da))$, we obtain
\begin{equation*}
\left\{
\begin{array}{l}
\frac{1}{2}[\Lambda_0,\Lambda_0] + E_0 \wedge \Lambda_0 = \Lambda_0^{\#}(d\omega_0)+
\Lambda_0^{\#}(\omega_0)\wedge E_0 \\
\\

[E_0,\Lambda_0] = (\Lambda_0^{\#} \otimes 1)(d\omega_0)(E_0)- ((\Lambda_0^{\#}
\otimes 1)(\omega_0)(E_0)) \wedge E_0,
\end{array}
\right.
\end{equation*}
that signify that $(\Lambda_0,E_0,\omega_0)$ is a twisted Jacobi structure on $M_0$ and $\varpi : (M,\Lambda,0,\omega)\to (M_0,\Lambda_0,E_0,\omega_0)$ is an $a$-conformal twisted Jacobi map.
\end{proof}

\begin{proposition}[\cite{jf}]\label{poissonization}
Let $(\Lambda,E,\omega)$ be a twisted Jacobi structure on a smooth manifold $M$. Its twisted poissonization defines a homogeneous twisted Poisson structure on $\tilde{M}=M\times \R$ whose the bivector field $\tilde{\Lambda}$ and the $2$-form $\tilde{\omega}$ are given, respectively, by
\begin{equation*}
\tilde{\Lambda}=e^{-s}(\Lambda + \frac{\partial}{\partial s}\wedge E) \quad and \quad \tilde{\omega}=e^s\omega,
\end{equation*}
and the homothety vector field is $\frac{\partial}{\partial s}$, where $s$ is the canonical coordinate on $\R$. The canonical projection $\varpi : M\times \R \to M$ is a $e^s$-conformal twisted Jacobi map and the induced twisted Jacobi structure on $M$, viewed as the submanifold $M\times \{0\}$ of $\tilde{M}$, in the sense of Proposition \ref{prop-htp-tj}, is the initial given one.
\end{proposition}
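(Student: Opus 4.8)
The plan is to check the four assertions in order: that $(\tilde M,\tilde\Lambda,d\tilde\omega)$ is an exact twisted Poisson manifold, that $\partial/\partial s$ is a homothety vector field, that $\varpi$ is an $e^{s}$-conformal twisted Jacobi map, and that the twisted Jacobi structure $\varpi$ induces on $M\times\{0\}$ in the sense of Proposition \ref{prop-htp-tj} is $(\Lambda,E,\omega)$. Only the first of these needs genuine work; the rest follow quickly from it together with Proposition \ref{prop-htp-tj} and the formulas for a conformal change.

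The computational core is a description of $\tilde\Lambda^{\#}$. Since $\Lambda$, $E$, $\omega$ and $d\omega$ are pulled back from $M$, they are $\partial/\partial s$-invariant and killed by $i(\partial/\partial s)$, and evaluating $\langle\cdot,\tilde\Lambda^{\#}(\cdot)\rangle=\tilde\Lambda(\cdot,\cdot)$ yields
\begin{equation*}
\tilde\Lambda^{\#}(ds)=e^{-s}E,\qquad
\tilde\Lambda^{\#}(\zeta)=e^{-s}\Big(\Lambda^{\#}(\zeta)-\langle\zeta,E\rangle\,\frac{\partial}{\partial s}\Big)\quad\text{for }\zeta\in\Gamma(T^*M),
\end{equation*}
hence $\tilde\Lambda^{\#}\big(e^{s}(\zeta+f\,ds)\big)=\big(\Lambda^{\#}(\zeta)+fE\big)-\langle\zeta,E\rangle\,\partial/\partial s$; in other words the bundle isomorphism $(\zeta,f)\mapsto e^{s}(\zeta+f\,ds)$ from $T^*M\times\R$ onto $T^*\tilde M$ conjugates $(\Lambda,E)^{\#}$ -- followed by the identification $(X,h)\mapsto X+h\,\partial/\partial s$ of $TM\times\R$ with $T\tilde M$ -- into $\tilde\Lambda^{\#}$. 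Next I would extend this to the isomorphism $(\mu,\nu)\mapsto e^{s}(\mu+ds\wedge\nu)$ of $\bigwedge^{k}(T^*M\times\R)$ with $\bigwedge^{k}T^*\tilde M$, under which the closed $3$-form $(d\omega,\omega)$ of $TM\times\R$ goes precisely to $d\tilde\omega=e^{s}(d\omega+ds\wedge\omega)$, and -- this is the part to be verified carefully -- which intertwines the $(0,1)$-modified de Rham differential and Schouten bracket of $(TM\times\R,[\cdot,\cdot],\pi)$ with the ordinary ones on $\tilde M$; this last fact is just the Lie-algebroid form of the poissonisation correspondence of \cite{im}. Granting this dictionary, equation \eqref{def-tj} for $(\Lambda,E,\omega)$ transports term by term into $\tfrac12[\tilde\Lambda,\tilde\Lambda]=\tilde\Lambda^{\#}(d\tilde\omega)$, so $(\tilde M,\tilde\Lambda,d\tilde\omega)$ is exact twisted Poisson.

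If one prefers a hands-on verification -- and this is where I expect the real obstacle to lie, in the sign bookkeeping rather than in anything conceptual -- one computes instead $[\tilde\Lambda,\tilde\Lambda]=e^{-2s}\big([\Lambda,\Lambda]+2\,E\wedge\Lambda+2\,\tfrac{\partial}{\partial s}\wedge[E,\Lambda]\big)$ by the graded Leibniz rule (using $\mathcal L_{\partial/\partial s}\Lambda=\mathcal L_{\partial/\partial s}E=0$ and $\tfrac{\partial}{\partial s}e^{-s}=-e^{-s}$), expands $\tilde\Lambda^{\#}(d\tilde\omega)=e^{s}\tilde\Lambda^{\#}(d\omega+ds\wedge\omega)$ via \eqref{formule-homo} after splitting every $1$-form into its $ds$-part and its horizontal part, and reads off that, in $\tfrac12[\tilde\Lambda,\tilde\Lambda]=\tilde\Lambda^{\#}(d\tilde\omega)$, the pure-$M$ components reduce to the first line of \eqref{equiv-tj} and the $\partial/\partial s\wedge(\cdot)$ components to the second, each up to the common factor $e^{-2s}$.

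The remaining assertions are now short. Homogeneity: $\mathcal L_{\partial/\partial s}\tilde\Lambda=\big(\tfrac{\partial}{\partial s}e^{-s}\big)\big(\Lambda+\tfrac{\partial}{\partial s}\wedge E\big)=-\tilde\Lambda$, and $i(\partial/\partial s)d\tilde\omega=e^{s}\,i(\partial/\partial s)(d\omega+ds\wedge\omega)=e^{s}\omega=\tilde\omega$. For the conformal statement, regard $(\tilde M,\tilde\Lambda,d\tilde\omega)$ as the twisted Jacobi manifold $(\tilde M,\tilde\Lambda,0,\tilde\omega)$, set $a=e^{s}$, and compute its $a$-conformal structure: $a\tilde\Lambda=\Lambda+\tfrac{\partial}{\partial s}\wedge E$, $\tilde\Lambda^{\#}(da)=e^{s}\tilde\Lambda^{\#}(ds)=E$, and $\tfrac1a\tilde\omega=\varpi^{*}\omega$; each of these is $\partial/\partial s$-invariant and projects under $\varpi$ (is pulled back, for the $2$-form) to the corresponding datum of $(M,\Lambda,E,\omega)$, so $\varpi$ is an $e^{s}$-conformal twisted Jacobi map. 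Finally, a function defined near $M\times\{0\}$ that is homogeneous of degree $1$ with respect to $\partial/\partial s$ and equal to $1$ on $M\times\{0\}$ is necessarily $e^{s}$; feeding this function and the projection $\varpi$ into Proposition \ref{prop-htp-tj} gives the induced data $\Lambda_{0}=\varpi_{*}(e^{s}\tilde\Lambda)=\Lambda$, $E_{0}=\varpi_{*}(\tilde\Lambda^{\#}(de^{s}))=E$ and $\varpi^{*}\omega_{0}=\tfrac1{e^{s}}\tilde\omega=\varpi^{*}\omega$, whence $\omega_{0}=\omega$, so the twisted Jacobi structure induced on $M\times\{0\}\cong M$ is exactly $(\Lambda,E,\omega)$.
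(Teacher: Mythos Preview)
The paper does not prove Proposition \ref{poissonization}; it is stated with a citation to \cite{jf} and no proof is supplied. Your proposal therefore cannot be compared against a proof in this paper, but it stands on its own as a correct argument.

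A few remarks on your approach. The Lie-algebroid dictionary you invoke---identifying $\bigwedge(T^*M\times\R)$ with $\bigwedge T^*\tilde M$ via $(\mu,\nu)\mapsto e^s(\mu+ds\wedge\nu)$ so that the $(0,1)$-modified differential and Schouten bracket on $TM\times\R$ match the ordinary ones on $T\tilde M$---is exactly the Iglesias--Marrero correspondence of \cite{im}, and it transports \eqref{def-tj} to $\tfrac12[\tilde\Lambda,\tilde\Lambda]=\tilde\Lambda^{\#}(d\tilde\omega)$ in one stroke; this is the cleanest route and is presumably close in spirit to what \cite{jf} does. Your alternative hands-on computation is also correct: the Schouten expansion $[\tilde\Lambda,\tilde\Lambda]=e^{-2s}\big([\Lambda,\Lambda]+2E\wedge\Lambda+2\,\tfrac{\partial}{\partial s}\wedge[E,\Lambda]\big)$ is right (the cross term $2E\wedge\Lambda$ arises from $A^{\#}(de^{-s})=-e^{-s}E$ with $A=\Lambda+\tfrac{\partial}{\partial s}\wedge E$), and matching horizontal and $\partial/\partial s$-components against $\tilde\Lambda^{\#}(d\tilde\omega)$ does recover the two lines of \eqref{equiv-tj}. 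The remaining verifications---homogeneity, the $e^s$-conformal property of $\varpi$, and the recovery of $(\Lambda,E,\omega)$ on $M\times\{0\}$ via Proposition \ref{prop-htp-tj} with $a=e^s$---are handled correctly.
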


\begin{corol}
When the structure $(\Lambda,E)$ on $M$ comes from a $\omega$-twisted contact form $\vartheta$ on $M$, then $\tilde{\Lambda}$ is the inverse of the twisted symplectic form $\tilde{\Omega}=d(e^s\varpi^*\vartheta)+e^s\varpi^*\omega$ on $\tilde{M}$, which is homogeneous with respect to $\frac{\partial}{\partial s}$, and reciprocally.
\end{corol}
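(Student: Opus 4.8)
The plan is to lean on Proposition~\ref{poissonization}, which already gives that $(\tilde M,\tilde\Lambda,d\tilde\omega,\partial/\partial s)$ is a homogeneous twisted Poisson manifold, with $\tilde\Lambda=e^{-s}(\Lambda+\frac{\partial}{\partial s}\wedge E)$ and $\tilde\omega=e^{s}\varpi^{*}\omega$; so what remains is to check that $\tilde\Omega$ is a genuine twisted symplectic form, that $\tilde\Lambda^{\#}$ and $\tilde\Omega^{\flat}$ are mutually inverse, and that $\tilde\Omega$ is homogeneous with respect to $\partial/\partial s$. First I would expand $d(e^{s}\varpi^{*}\vartheta)=e^{s}\,ds\wedge\varpi^{*}\vartheta+e^{s}\varpi^{*}d\vartheta$, so that $\tilde\Omega=e^{s}\,ds\wedge\varpi^{*}\vartheta+e^{s}\varpi^{*}(d\vartheta+\omega)$. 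For nondegeneracy, in $\tilde\Omega^{n+1}$ all terms vanish except $(n+1)e^{(n+1)s}\,ds\wedge\varpi^{*}(\vartheta\wedge(d\vartheta+\omega)^{n})$, which is a volume form on $\tilde M$ precisely because $\vartheta\wedge(d\vartheta+\omega)^{n}$ is one on $M$; and a two-line computation gives $d\tilde\Omega=e^{s}\,ds\wedge\varpi^{*}\omega+e^{s}\varpi^{*}d\omega=d\tilde\omega$, so $\tilde\Omega$ is twisted symplectic for the closed $3$-form $d\tilde\omega$.

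The core is the inverse relation, which I would carry out in the splittings $T\tilde M=TM\oplus\R\frac{\partial}{\partial s}$ and $T^{*}\tilde M=\varpi^{*}T^{*}M\oplus\R\,ds$. From the formula for $\tilde\Lambda$ one gets $\tilde\Lambda^{\#}(ds)=e^{-s}E$ and $\tilde\Lambda^{\#}(\varpi^{*}\zeta)=e^{-s}(\Lambda^{\#}(\zeta)-\langle\zeta,E\rangle\frac{\partial}{\partial s})$ for $\zeta\in\Gamma(T^{*}M)$; from the formula for $\tilde\Omega$ one gets $\tilde\Omega^{\flat}(\frac{\partial}{\partial s})=e^{s}\varpi^{*}\vartheta$ and $\tilde\Omega^{\flat}(X)=e^{s}(-\langle\vartheta,X\rangle\,ds+\varpi^{*}(i(X)(d\vartheta+\omega)))$ for $X\in\Gamma(TM)$. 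Composing, $\tilde\Lambda^{\#}(\tilde\Omega^{\flat}(\frac{\partial}{\partial s}))=\Lambda^{\#}(\vartheta)-\langle\vartheta,E\rangle\frac{\partial}{\partial s}=-\frac{\partial}{\partial s}$ by~(\ref{reeb})--(\ref{bivect-reeb}); and for $X\in\Gamma(TM)$, since $\langle i(X)(d\vartheta+\omega),E\rangle=-(d\vartheta+\omega)(E,X)=0$ by~(\ref{reeb}), the relation~(\ref{bivect-reeb}) yields $i(\Lambda^{\#}(i(X)(d\vartheta+\omega))+X)(d\vartheta+\omega)=0$, so $\Lambda^{\#}(i(X)(d\vartheta+\omega))+X\in\ker(d\vartheta+\omega)=\R E$; pairing with $\vartheta$ and using $\langle\vartheta,\Lambda^{\#}(\eta)\rangle=-\langle\eta,\Lambda^{\#}(\vartheta)\rangle=0$ pins this vector down to $\langle\vartheta,X\rangle E$, whence $\tilde\Lambda^{\#}(\tilde\Omega^{\flat}(X))=-\langle\vartheta,X\rangle E+(-X+\langle\vartheta,X\rangle E)=-X$. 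Thus $\tilde\Lambda^{\#}\circ\tilde\Omega^{\flat}=-\mathrm{id}_{T\tilde M}$, which under the sign conventions of the Notation section is exactly the statement that $\tilde\Lambda$ is the inverse of $\tilde\Omega$.

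For homogeneity, the only $s$-dependence of $\tilde\Omega$ is the overall $e^{s}$ and $\frac{\partial}{\partial s}$ kills $ds$ and every $\varpi^{*}$-pullback, so $\mathcal{L}_{\partial/\partial s}\tilde\Omega=\tilde\Omega$ (equivalently this is $\mathcal{L}_{\partial/\partial s}\tilde\Lambda=-\tilde\Lambda$ from Proposition~\ref{poissonization}); together with $i(\frac{\partial}{\partial s})\tilde\Omega=e^{s}\varpi^{*}\vartheta$ and $\tilde\omega=i(\frac{\partial}{\partial s})d\tilde\Omega$ this is precisely the homogeneity of $\tilde\Omega$ with respect to $\partial/\partial s$ (the conditions of~(\ref{def-homogeneous}) transported to the symplectic side). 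For the converse I would start from a twisted symplectic $\tilde\Omega$ on $M\times\R$ homogeneous with respect to $\partial/\partial s$ whose twisting $3$-form is $d\tilde\omega$ with $\tilde\omega=e^{s}\varpi^{*}\omega$: writing $\tilde\Omega=e^{s}\gamma$ with $\gamma$ then $\frac{\partial}{\partial s}$-invariant and matching $d\tilde\Omega=d\tilde\omega$ forces $\tilde\Omega=e^{s}\,ds\wedge\varpi^{*}\vartheta+e^{s}\varpi^{*}(d\vartheta+\omega)$ with $\vartheta=\iota^{*}(e^{-s}i(\frac{\partial}{\partial s})\tilde\Omega)$, $\iota\colon M\hookrightarrow M\times\{0\}$; nondegeneracy of $\tilde\Omega$ gives $\vartheta\wedge(d\vartheta+\omega)^{n}\neq0$, so $\vartheta$ is an $\omega$-twisted contact form, and running the computation of the previous paragraph backwards identifies the pair $(\Lambda,E)$ attached to $\vartheta$ by~(\ref{reeb})--(\ref{bivect-reeb}) with the twisted Jacobi reduction (Proposition~\ref{prop-htp-tj}) of $\tilde\Lambda=\tilde\Omega^{-1}$.

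The only step needing genuine care is the inverse relation: all the algebra must be done in the $TM\oplus\R\frac{\partial}{\partial s}$ splitting, and it is the two defining features $\ker(d\vartheta+\omega)=\R E$ and $\Lambda^{\#}(\vartheta)=0$ of a twisted contact structure that make the mixed terms cancel so that the composite comes out exactly $-\mathrm{id}$; the conventions for $\Lambda^{\#}$, for $\tilde\Omega^{\flat}=i(\cdot)\tilde\Omega$, and for wedge products of vector fields all have to be tracked consistently, or the sign in ``$\tilde\Lambda$ is the inverse of $\tilde\Omega$'' comes out wrong.
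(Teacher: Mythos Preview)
Your proof is correct. Note that in the paper this corollary is stated without proof, immediately after Proposition~\ref{poissonization}, as a direct consequence of the poissonization construction; the author does not supply the explicit verification of $\tilde{\Lambda}^{\#}\circ\tilde{\Omega}^{\flat}=-\mathrm{id}$ or of the nondegeneracy/homogeneity of $\tilde{\Omega}$. Your argument therefore fills in precisely the details the paper leaves to the reader, and the computation you carry out in the splitting $T\tilde M=TM\oplus\R\,\partial/\partial s$ using the defining relations~(\ref{reeb})--(\ref{bivect-reeb}) is the natural way to do so. The only remark is that your converse paragraph assumes slightly more structure than the bare statement ``reciprocally'' might require (you posit a priori that the twisting $3$-form has the specific shape $d(e^{s}\varpi^{*}\omega)$), but this is consistent with how the paper uses the corollary later, in Proposition~\ref{prop-tcg-tsg}, so it is the intended reading.
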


By applying the above results, we may construct another example of twisted contact manifold.

\noindent
\textbf{\emph{The cosphere bundle of a twisted Poisson manifold.}} Let $(M,\Lambda,\varphi)$ be a twisted Poisson manifold. As D. Roytenberg has observed \cite{royt}, the cotangent bundle $T^\ast M$ of $M$ is endowed with a nondegenerate $d\omega$-twisted Poisson structure which corresponds to a $d\omega$-twisted symplectic structure $\Omega = d\vartheta + \omega$, $\vartheta$ being the Liouville form on $T^\ast M$. In a local coordinates system $(x_1,\ldots,x_n,p_1,\ldots,p_n)$ of $T^\ast M$,
\begin{equation*}
\vartheta = \sum_{i=1}^n p_i dx_i  \quad \mathrm{and} \quad \omega = \frac{1}{2}\sum_{i,j,k,l=1}^n p_i\lambda^{ij}\varphi_{jkl}dx_k\wedge dx_l,
\end{equation*}
where $\lambda^{ij}$ and $\varphi_{jkl}$ are, respectively, the local components of $\Lambda$ and $\varphi$. We remark that $(\Omega, \omega)$ is homogeneous with respect to the Liouville vector field $Z = \sum_{i=1}^{n}p_i\frac{\partial}{\partial p_i}$ of $T^\ast M$. Consider the action $\Phi$ of the multiplicative group $\R_+ = (0,+\infty)$ by dilations on the fibers of $T^\ast M \setminus \{0\}$, i.e., for any $x\in M$ and $z\in T_x^\ast M$, $\Phi(s,z)=sz$. The cosphere bundle $S^\ast M$ of $M$ is the quotient manifold $(T^\ast M \setminus \{0\})/\R_+$. By working as in \cite{rs}, we prove that the cone $S^\ast M \times \R_+$ over $S^\ast M$ equipped with the homogeneous twisted symplectic structure $(d(s\vartheta) + s\omega, s\frac{\partial}{\partial s})$, $s\in \R_+$, is twisted symplectomorphic to $(T^\ast M \setminus \{0\}, \Omega, Z)$. Thus, $S^\ast M $ receives a twisted contact structure.

\vspace{2mm}

We continue with the study of the projection of a twisted Jacobi structure along the integral curves of its vector field.

\begin{proposition}\label{th-tj-tp}
Let $(M,\Lambda,E,\omega)$ be a twisted Jacobi manifold and $M_0$ a submanifold of $M$, of codimension $1$, transverse to $E$. We denote by $\varpi : U \to M_0$ the projection onto $M_0$ of a tubular neighbourhood $U$ of $M_0$ in $M$ along the integral curves of $E$, i.e., for any $x_0 \in M_0$, $\varpi^{-1}(x_0)$ is a connected arc of the integral curve of $E$ through $x_0$, and by $\eta$ the $1$-form along $M_0$ that verifies $i(E)\eta = 1$ and $i(X)\eta= 0$, for any vector field $X$ on $M$ tangent to $M_0$, and we set $Z_0=\Lambda^{\#}(\eta)$. If $\omega = \varpi^*\omega_0$, where $\omega_0$ is a $2$-form on $M_0$, then, there exists on $M_0$ a unique exact twisted Poisson structure $(\Lambda_0,d\omega_0)$ such that $\varpi : U \to M_0$ is a twisted Jacobi map and
\begin{equation}\label{eq-Z}
\mathcal{L}_{Z_0}\Lambda_0 = - \Lambda_0 - \Lambda_0^{\#}(d\omega_0(Z_0,\cdot,\cdot)- \omega_0).
\end{equation}
Moreover, $(\Lambda_0,d\omega_0,Z_0)$ is homogeneous if and only if $\omega_0 = d\omega_0(Z_0,\cdot,\cdot)$.
\end{proposition}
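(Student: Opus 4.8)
The plan is to obtain the whole structure on $M_{0}$ by projecting the data of $M$ along the flow of $E$. The starting point is that $\varpi_{*}E=0$ (since $E$ is tangent to the fibres of $\varpi$), so the hypothesis $\omega=\varpi^{*}\omega_{0}$ yields $i(E)\omega=0$, and, as $d\omega=\varpi^{*}d\omega_{0}$, also $i(E)d\omega=0$. A direct computation from the formula for $\Lambda^{\#}\otimes 1$ gives $(\Lambda^{\#}\otimes 1)(\omega)(E)=\Lambda^{\#}(i(E)\omega)$ and $(\Lambda^{\#}\otimes 1)(d\omega)(E)=-\Lambda^{\#}(i(E)d\omega)$, so the entire right-hand side of the second equation of $(\ref{equiv-tj})$ vanishes and that equation becomes $\mathcal{L}_{E}\Lambda=[E,\Lambda]=0$. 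Hence $\Lambda$, $\omega$ and $d\omega$ are invariant under the flow of $E$ and project to $M_{0}$; I set $\Lambda_{0}=\varpi_{*}\Lambda$.

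Next I would project the first equation of $(\ref{equiv-tj})$ along the flow of $E$. By the graded Jacobi identity for the Schouten bracket and $[E,\Lambda]=0$, the trivector $[\Lambda,\Lambda]$ is again flow-invariant and projects to $[\Lambda_{0},\Lambda_{0}]$, while $\Lambda^{\#}(d\omega)=\Lambda^{\#}(\varpi^{*}d\omega_{0})$ projects to $\Lambda_{0}^{\#}(d\omega_{0})$; the remaining terms $E\wedge\Lambda$ and $\Lambda^{\#}(\omega)\wedge E$ carry $E$ as a wedge factor, hence vanish on triples of $\varpi$-basic $1$-forms and project to $0$. The projected identity is $\tfrac{1}{2}[\Lambda_{0},\Lambda_{0}]=\Lambda_{0}^{\#}(d\omega_{0})$, i.e. $(\Lambda_{0},d\omega_{0})$ is an exact twisted Poisson structure, and by construction $\varpi\colon U\to M_{0}$ is a twisted Jacobi map onto $(M_{0},\Lambda_{0},0,\omega_{0})$. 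Uniqueness is immediate: any such map forces $\Lambda_{0}=\varpi_{*}\Lambda$, and, $\varpi^{*}$ being injective on forms, the target $2$-form must be $\omega_{0}$.

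To establish $(\ref{eq-Z})$ I would extend $\eta$ to the exact $1$-form $dt$ on $U$, where $t\colon U\to\R$ satisfies $M_{0}=\{t=0\}$ and $E(t)=1$; then $i(E)dt=1$, $\mathcal{L}_{E}dt=0$ and $dt|_{M_{0}}=\eta$. Put $\tilde{Z}=\Lambda^{\#}(dt)$. From $\mathcal{L}_{E}\Lambda=0$ and $\mathcal{L}_{E}dt=0$ we get $\mathcal{L}_{E}\tilde{Z}=0$, so $\tilde{Z}$ projects; moreover $\tilde{Z}|_{M_{0}}=\Lambda^{\#}(\eta)=Z_{0}$, which is tangent to $M_{0}$ because $\langle\eta,\Lambda^{\#}(\eta)\rangle=\Lambda(\eta,\eta)=0$ and $\ker\eta=TM_{0}$ along $M_{0}$, so $\varpi_{*}\tilde{Z}=Z_{0}$ and hence $\mathcal{L}_{Z_{0}}\Lambda_{0}=\varpi_{*}(\mathcal{L}_{\tilde{Z}}\Lambda)$. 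Now I would invoke the standard identity $\mathcal{L}_{\Lambda^{\#}(dt)}\Lambda=\tfrac{1}{2}\,i(dt)[\Lambda,\Lambda]$ (valid since $dt$ is exact), substitute $\tfrac{1}{2}[\Lambda,\Lambda]=\Lambda^{\#}(d\omega)+\Lambda^{\#}(\omega)\wedge E-E\wedge\Lambda$ from $(\ref{equiv-tj})$, and expand the contractions using $i(dt)E=1$, $i(dt)\Lambda=\tilde{Z}$ and $i(dt)\Lambda^{\#}(\sigma)=-\Lambda^{\#}(i(\tilde{Z})\sigma)$; a short computation then gives $\mathcal{L}_{\tilde{Z}}\Lambda=-\Lambda-\Lambda^{\#}\big(i(\tilde{Z})d\omega-\omega\big)-\big(\Lambda^{\#}(i(\tilde{Z})\omega)+\tilde{Z}\big)\wedge E$. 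Since $i(\tilde{Z})d\omega=\varpi^{*}(i(Z_{0})d\omega_{0})$ and $i(\tilde{Z})\omega=\varpi^{*}(i(Z_{0})\omega_{0})$ are $\varpi$-basic, projecting discards the last term and leaves $\mathcal{L}_{Z_{0}}\Lambda_{0}=-\Lambda_{0}-\Lambda_{0}^{\#}(d\omega_{0}(Z_{0},\cdot,\cdot)-\omega_{0})$, which is $(\ref{eq-Z})$.

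The last assertion is then read off $(\ref{eq-Z})$: if $\omega_{0}=d\omega_{0}(Z_{0},\cdot,\cdot)$ the correction term vanishes, so $(\ref{eq-Z})$ reduces to $\mathcal{L}_{Z_{0}}\Lambda_{0}=-\Lambda_{0}$, and together with $\tfrac{1}{2}[\Lambda_{0},\Lambda_{0}]=\Lambda_{0}^{\#}(d\omega_{0})$ and $\omega_{0}=i(Z_{0})d\omega_{0}$ this is precisely the statement that $(\Lambda_{0},d\omega_{0},Z_{0})$ is homogeneous; conversely, homogeneity contains $\omega_{0}=i(Z_{0})d\omega_{0}$ by definition. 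I expect the main obstacle to lie in the bookkeeping of the third step — checking that every multivector occurring is genuinely projectable (propagating $\mathcal{L}_{E}$-invariance through the Schouten bracket via its graded Jacobi identity) and keeping the signs straight when expanding $i(dt)[\Lambda,\Lambda]$; the rest is formal.
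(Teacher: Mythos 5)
Your argument is correct, and its first half (showing $[E,\Lambda]=0$ from the second equation of (\ref{equiv-tj}) because $i(E)\omega=i(E)d\omega=0$, then projecting the first equation to get $\tfrac12[\Lambda_0,\Lambda_0]=\Lambda_0^{\#}(d\omega_0)$ and the twisted Jacobi map property) coincides with the paper's proof. Where you genuinely diverge is the derivation of (\ref{eq-Z}): the paper works at the level of functions, applying the twisted Jacobi identity (\ref{jac-jac}) to $\{h,\{f,g\}\}$ for the transversal function $h$ (your $t$) and basic $f,g$, with the full Hamiltonian field $X_h=\Lambda^{\#}(dh)+hE$, and then reads off $\mathcal{L}_{Z_0}\Lambda_0$ from a chain of bracket manipulations; you instead work tensorially, invoking $\mathcal{L}_{\Lambda^{\#}(dt)}\Lambda=\tfrac12 i(dt)[\Lambda,\Lambda]$ and contracting the first equation of (\ref{equiv-tj}) with $dt$. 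I checked your intermediate formula against the sign conventions of the paper (in particular the $(-1)^k$ in (\ref{formule-homo}), which indeed gives $i(dt)\Lambda^{\#}(\sigma)=-\Lambda^{\#}(i(\tilde Z)\sigma)$ for both $\omega$ and $d\omega$, and $i(dt)(E\wedge\Lambda)=\Lambda+\tilde Z\wedge E$), and after projection it reproduces exactly the paper's (\ref{eq-Z}); note also that the sign in your "standard identity" is the one forced by the paper's normalization of the Schouten bracket, which is pinned down by the compatibility of (\ref{def-tj}) with (\ref{jac-jac}), so your appeal to it is legitimate rather than convention-dependent luck. The trade-off is that your route replaces the paper's long bracket computation by a short Schouten-calculus one (essentially the same computation evaluated on exact basic forms), at the price of the sign bookkeeping you already flag; you also make explicit two small points the paper leaves implicit, namely that $\Lambda^{\#}(\eta)$ is tangent to $M_0$ (so the projection of $\tilde Z$ really is $Z_0$) and the uniqueness of $(\Lambda_0,d\omega_0)$, while using $\Lambda^{\#}(dt)$ in place of $X_h$ changes nothing since the vertical correction $hE$ projects to zero. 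The final equivalence with homogeneity is read off (\ref{eq-Z}) and (\ref{def-homogeneous}) exactly as in the paper.
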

\begin{proof}
Since $(\Lambda,E,\omega)$ defines a twisted Jacobi structure on $M$, (\ref{equiv-tj}) holds on $M$. Thus, because of $\omega = \varpi^*\omega_0$ and $\varpi_*E=0$,
\begin{eqnarray}\label{E-L}
[E,\Lambda] & = & (\Lambda^{\#} \otimes 1)(\varpi^*d\omega_0)(E)- (( \Lambda^{\#} \otimes 1)(\varpi^*\omega_0)(E)) \wedge E \nonumber \\
            & = & - (\varpi_*\Lambda)^{\#}(d\omega_0(\varpi_*E,\cdot,\cdot)) - (\varpi_*\Lambda)^{\#}(\omega_0(\varpi_*E,\cdot))\wedge E \,=\,0,
\end{eqnarray}
which means that $\Lambda$ is projectable along the integral curves of $E$ onto $M_0$. Let $\Lambda_0 = \varpi_*\Lambda$ be its projection. Furthermore, (\ref{E-L}) implies that $[E,[\Lambda,\Lambda]]=0$, whence we get that $[\Lambda,\Lambda]$ is also projectable onto $M_0$. Its projection $\varpi_*[\Lambda,\Lambda]$ is the Schouten bracket of the projection $\varpi_*\Lambda$ of $\Lambda$ with itself. Thus, by projecting the first equation of the system (\ref{equiv-tj}) parallel to the integral curves of $E$, we obtain
\begin{eqnarray}\label{ind-poisson}
\frac{1}{2}[\varpi_*\Lambda,\varpi_*\Lambda] + (\varpi_*E) \wedge (\varpi_*\Lambda)& = &\varpi_*(\Lambda^{\#}(\varpi^*d\omega_0))+
\varpi_*(\Lambda^{\#}(\varpi^*\omega_0))\wedge (\varpi_*E) \Leftrightarrow \nonumber \\
\frac{1}{2}[\Lambda_0,\Lambda_0] &= &\Lambda_0^{\#}(d\omega_0),
\end{eqnarray}
whence we deduce that
$(\Lambda_0, d\omega_0)$ endows $M_0$ with an exact twisted Poisson structure and that $\varpi : U \to M_0$ is a twisted Jacobi map.

Now, by integrating along the integral curves of $E$ and by restricting, if necessary, the tubular neighbourhood $U$ of $M_0$ in $M$, we can construct a function $h$ on $U$ such that $h\vert_{M_0}=0$ and $i(E)dh=1$, hence $dh\vert_{M_0}=\eta$. Let $X_h = \Lambda^{\#}(dh) + hE$ be the hamiltonian vector field of $h$ with respect to $(\Lambda,E)$. Since $[E,\Lambda](dh,\cdot) = 0$, $[E,X_h]=E$, which says that $X_h$ is projectable onto $M_0$ along the integral curves of $E$. Let $Z_0 = \varpi_*(X_h) = \varpi_*(\Lambda^{\#}(dh))$ be its projection which coincides with the restriction $X_h\vert_{M_0} = \Lambda^{\#}(\eta)$ of $X_h$ on $M_0$.

In order to establish (\ref{eq-Z}), we denote by $\{\cdot,\cdot\}$ the composition law (\ref{br-j}) on $\C$ determined by $(\Lambda,E)$ and by $\{\cdot,\cdot\}_0$ the composition law in $C^{\infty}(M_0,\R)$ defined by $\Lambda_0$. Let $(f_0,g_0)$ be a pair of smooth functions on $M_0$ and $(f,g)=(\varpi^*f_0,\varpi^*g_0)$ the associated pair of smooth functions on $M$ that are constant along the integral curves of $E$. We have
\begin{eqnarray}\label{eq-br}
\{f,g\}& = & \{\varpi^*f_0,\varpi^*g_0\} \stackrel{(\ref{br-j})}{=}\Lambda(\varpi^*df_0,\varpi^*dg_0) + \langle \varpi^*f_0\varpi^*dg_0-\varpi^*g_0\varpi^*df_0, E\rangle \nonumber \\
& = & \varpi^*\{f_0,g_0\}_0.
\end{eqnarray}
On the other hand,
\begin{eqnarray}\label{rel-1}
\lefteqn{(X_h-1)\{f,g\}  = \{h,\{f,g\}\}}  \nonumber \\
                & \stackrel{(\ref{jac-jac})}{=} & \{\{h,f\},g\} + \{f,\{h,g\}\} + (\Lambda,E)^{\#}(d\omega,\omega)((dh,h),(df,f),(dg,g)) \nonumber \\
               & = & \{(X_h-1)f,g\} + \{f, (X_h-1)g\} + (\Lambda,E)^{\#}(d\omega,\omega)((dh,h),(df,f),(dg,g)).
\end{eqnarray}
But, for any $f\in \C$ of type $f = \varpi^*f_0$ with $f_0\in C^{\infty}(M_0,\R)$,
\begin{equation}\label{rel-2}
(X_h-1)f = \varpi^*(\varpi_*(X_h-1)f_0) = \varpi^*((Z_0-1)f_0).
\end{equation}
Also,
\begin{eqnarray}\label{rel-3}
\lefteqn{(\Lambda,E)^{\#}(d\omega,\omega)((dh,h),(df,f),(dg,g))  \stackrel{(\ref{def-tj})}{=}  (\frac{1}{2}[\Lambda,\Lambda] + E\wedge \Lambda)(dh,df,dg)} \nonumber \\
& & + \,h [E,\Lambda](df,dg) + f [E,\Lambda](dg,dh) + g [E,\Lambda](dh,df) \nonumber \\
&  \stackrel{(\ref{equiv-tj})(\ref{E-L})}{=} & (\Lambda^{\#}(d\omega)+
\Lambda^{\#}(\omega)\wedge E)(dh,df,dg) \nonumber \\
&=& \Lambda^{\#}(\varpi^*d\omega_0)(dh,\varpi^*df_0,\varpi^*dg_0) + \Lambda^{\#}(\varpi^*\omega_0)(dh,\varpi^*df_0)\langle \varpi^*dg_0,E \rangle \nonumber \\
& & +\, \Lambda^{\#}(\varpi^*\omega_0)(\varpi^*df_0,\varpi^*dg_0)\langle dh,E \rangle + \Lambda^{\#}(\varpi^*\omega_0)(\varpi^*dg_0,dh)\langle \varpi^*df_0,E \rangle \nonumber \\
&\stackrel{(\ref{formule-homo})}{=} & \varpi^*(- \, d\omega_0(\varpi_*(\Lambda^{\#}(dh)),\varpi_*(\Lambda^{\#}(\varpi^*df_0)),\varpi_*(\Lambda^{\#}(\varpi^*dg_0)))) \nonumber \\
& & +\, \varpi^*(\omega_0(\varpi_*(\Lambda^{\#}(\varpi^*df_0)),\varpi_*(\Lambda^{\#}(\varpi^*dg_0)))) \nonumber \\
& = & \varpi^*(- \, d\omega_0(Z_0, \Lambda_0^{\#}(df_0),\Lambda_0^{\#}(dg_0)) + \omega_0(\Lambda_0^{\#}(df_0),\Lambda_0^{\#}(dg_0))) \nonumber \\
&\stackrel{(\ref{formule-homo})}{=} & \varpi^*((- \, \Lambda_0^{\#}(d\omega_0(Z_0,\cdot,\cdot)) + \Lambda_0^{\#}(\omega_0))(df_0,dg_0)).
\end{eqnarray}
Thus, taking into account (\ref{eq-br}), (\ref{rel-2}) and (\ref{rel-3}), (\ref{rel-1}) takes the form
\begin{eqnarray*}
\varpi^*((Z_0-1)\{f_0,g_0\}_0) & = & \{\varpi^*((Z_0-1)f_0), \varpi^*g_0\} + \{\varpi^*f_0,\varpi^*((Z_0-1)g_0)\} \nonumber \\
& & + \, \varpi^*((- \, \Lambda_0^{\#}(d\omega_0(Z_0,\cdot,\cdot)) + \Lambda_0^{\#}(\omega_0))(df_0,dg_0)) \stackrel{(\ref{eq-br})}{\Leftrightarrow} \nonumber \\
(Z_0-1)\{f_0,g_0\}_0 & = & \{(Z_0-1)f_0,g_0\}_0 + \{f_0, (Z_0-1)g_0\}_0  \\
& & + \, (- \, \Lambda_0^{\#}(d\omega_0(Z_0,\cdot,\cdot)) + \Lambda_0^{\#}(\omega_0))(df_0,dg_0) \Leftrightarrow \nonumber \\
\mathcal{L}_{Z_0}\{f_0,g_0\}_0 - \{f_0,g_0\}_0  & = & \{\mathcal{L}_{Z_0}f_0,g_0\}_0 + \{f_0, \mathcal{L}_{Z_0}g_0\}_0 - 2\{f_0,g_0\}_0 \nonumber \\
& & + \, (- \, \Lambda_0^{\#}(d\omega_0(Z_0,\cdot,\cdot)) + \Lambda_0^{\#}(\omega_0))(df_0,dg_0) \Leftrightarrow \nonumber \\
\mathcal{L}_{Z_0}\Lambda_0(df_0,dg_0) & = & -\Lambda_0(df_0,dg_0) + (- \, \Lambda_0^{\#}(\varphi_0(Z_0,\cdot,\cdot)) + \Lambda_0^{\#}(\omega_0))(df_0,dg_0),
\end{eqnarray*}
whence we deduce (\ref{eq-Z}). Also, according to (\ref{def-homogeneous}) and (\ref{eq-Z}), we get that $(\Lambda_0,d\omega_0)$ is homogeneous with respect to $Z_0$ if and only if $\omega_0 = d\omega_0(Z_0,\cdot,\cdot)$.
\end{proof}

\section{Twisted contact and homogeneous twisted symplectic groupoids}\label{sect-group}
We recall some basic results about the Lie groupoids which are needed in below and we fix our notations. For more detailed information, we suggest the following standard references \cite{mck, aw, duf-zung}.

\vspace{2mm}

Let $\Gamma \overset{\alpha}{\underset{\beta}{\rightrightarrows}}\Gamma_0$ be a \emph{Lie groupoid} with \emph{source map} $\alpha$ and \emph{target map} $\beta$. We denote by
\begin{itemize}
\item[-]
$m : \Gamma_{2} \to \Gamma$ the \emph{product map} on the set of composable pairs $\Gamma_{2} = \{(g,h)\in \Gamma\times \Gamma\, /\, \alpha(g)=\beta(h)\}$ of $\Gamma\times \Gamma$, $(g,h)\mapsto m(g,h)=g\cdot h$, which is compatible with $\alpha$ and $\beta$, i.e., for any $(g,h)\in \Gamma_2$, $\alpha(g\cdot h)=\alpha(h)$ and $\beta(g\cdot h)=\beta(g)$, and associative, i.e., for any triple $(g,h,k)$ of composable elements, $(g\cdot h)\cdot k = g \cdot (h\cdot k)$;
\item[-]
$\varepsilon: \Gamma_0 \hookrightarrow \Gamma$ the \emph{embedding} of $\Gamma_0$ into $\Gamma$, called the \emph{unit section of} $\Gamma$, which associates a unit with each element of $\Gamma_0$\footnote{For this reason, $\Gamma_0$ called, also, manifold of units.}, i.e., for any $g\in \Gamma$, $g\cdot \varepsilon(\alpha(g))=g = \varepsilon(\beta(g))\cdot g$, and satisfies $\alpha \circ \varepsilon = \beta \circ \varepsilon = \mathrm{Id}_{\Gamma_0}$;
\item[-]
$\iota: \Gamma \to \Gamma$ the \emph{inversion map}, $g\mapsto \iota(g)=g^{-1}$, which has the following properties: for any $g\in \Gamma$, $\alpha(g^{-1})=\beta(g)$, $\beta(g^{-1})=\alpha(g)$, $g^{-1}\cdot g = \varepsilon(\alpha(g))$ and $g\cdot g^{-1}=\varepsilon(\beta(g))$.
\end{itemize}

For each $g\in \Gamma$, the maps $L_g : \beta^{-1}(\alpha(g)) \to \beta^{-1}(\beta(g))$, $h\mapsto L_g(h)=g\cdot h$, and $R_g : \alpha^{-1}(\beta(g)) \to \alpha^{-1}(\alpha(g))$, $h\mapsto R_g(h)=h\cdot g$, are smooth diffeomorphisms and they are called the \emph{left translation} and the \emph{right translation} by $g$, respectively. We denote by $C_L^\infty(\Gamma, \R)$ (resp. $C_R^\infty(\Gamma,\R)$) the set of \emph{left} (resp. \emph{right}) \emph{invariant functions} on $\Gamma$, i.e., the set of smooth functions on $\Gamma$ that are invariant by the left (resp. right) translations. It is easy to verify that $C_L^\infty(\Gamma,\R) = \alpha^\ast C^\infty(\Gamma_0,\R)$ (resp. $C_R^\infty(\Gamma,\R) = \beta^\ast C^\infty(\Gamma_0,\R)$). Also, we say that a vector field $X$ on $\Gamma$ is a \emph{left invariant vector field} (resp. \emph{right invariant vector field}) if it is tangent to $\beta$-fibers, so $\beta_\ast(X)=0$, (resp. tangent to $\alpha$-fibers, so $\alpha_\ast(X) = 0$) and invariant under the left translations, i.e., $L_{g\ast}(X(h))=X(g\cdot h)$, (resp. the right translations, i.e., $R_{g\ast}(X(h))=X(h\cdot g)$).

Let $A(\Gamma) = \ker\beta_\ast\cap T_{\Gamma_0}\Gamma$ be the vector bundle over $\Gamma_0$ consisting of tangent spaces to $\beta$-fibers at the points of $\Gamma_0$. By left translations, each section of $A(\Gamma)$ gives rise to a unique left invariant vector field on $\Gamma$. Hence, the space of smooth sections $\Gamma(A(\Gamma))$ of $A(\Gamma)$ may be identified with the space of smooth left invariant vector fields on $\Gamma$ which is closed with respect the usual Lie bracket $[\cdot, \cdot]$ on $T\Gamma$. So, $\Gamma(A(\Gamma))$ inherits a Lie bracket $[\cdot, \cdot]$. $A(\Gamma)$ equipped with the above bracket and the restriction to $A(\Gamma)$ of the linear bundle map $T\alpha : T\Gamma \to T\Gamma_0$ as anchor map, becomes a Lie algebroid over $\Gamma_0$. $(A(\Gamma), [\cdot, \cdot], T\alpha)$ called the \emph{Lie algebroid of the Lie groupoid} $\Gamma \overset{\alpha}{\underset{\beta}{\rightrightarrows}}\Gamma_0$.

Recall that a \emph{Lie algebroid} \cite{mck, duf-zung} over a smooth manifold $M$ is a real vector bundle $A \to M$ together with a Lie bracket $[\cdot,\cdot]$ on the space of smooth sections $\Gamma(A)$ and a bundle map $\rho : A \to TM$, called the \emph{anchor map}, whose extension to sections satisfies the Leibniz identity
\begin{equation*}
[s_1, fs_2] = f[s_1,s_2] + (\rho(s_1)f)s_2, \quad \quad \mathrm{for}\, \mathrm{all}\; s_1, s_2 \in \Gamma(A) \;\; \mathrm{and} \;\; f\in \C.
\end{equation*}

\vspace{2mm}

Moreover, the presence of a multiplicative function $r$ on a (Lie) groupoid $\Gamma \overset{\alpha}{\underset{\beta}{\rightrightarrows}}\Gamma_0$, i.e., for any $(g,h)\in \Gamma_2$, $r(g\cdot h) = r(g) + r(h)$, permits us to define a left groupoid action of $\Gamma$ on the canonical projection $\varpi : \tilde{\Gamma}_0 = \Gamma_0\times \R \to \Gamma_0$ defined by the map
\begin{equation}\label{act-grpd}
\mathbf{ac}^{r} : \Gamma \star \tilde{\Gamma}_0 \longrightarrow \tilde{\Gamma}_0, \quad \quad \mathbf{ac}^{r}(g,(x,s))=(\beta(g), s -r(g)),
\end{equation}
where $\Gamma \star \tilde{\Gamma}_0 = \{(g,(x,s)) \in \Gamma \times \tilde{\Gamma}_0 \, / \, \alpha(g)=\varpi(x,s)=x\}$. Then, $\Gamma \star \tilde{\Gamma}_0$ itself has a (Lie) groupoid structure \cite{mck, duf-zung} with manifold of units $\tilde{\Gamma}_0$ and it is called \emph{the action groupoid associated to} $\mathbf{ac}^{r}$. It is easy to see that $\Gamma \star \tilde{\Gamma}_0$ may be identified with the product manifold $\tilde{\Gamma} = \Gamma \times \R$. Under this identification, the structural maps $\tilde{\alpha}$ (source) and $\tilde{\beta}$ (target) of $\tilde{\Gamma}$ are given, respectively, by
\begin{equation*}
\quad \quad \tilde{\alpha}(g,s)=(\alpha(g),s) \quad \quad \mathrm{and} \quad \quad \tilde{\beta}(g,s)=(\beta(g),s-r(g)), \quad \quad \mathrm{for} \; \mathrm{any} \;(g,s)\in \tilde{\Gamma}.
\end{equation*}
The set $\tilde{\Gamma}_2$ of composable pairs of $\tilde{\Gamma}\times \tilde{\Gamma}$ is identified with $\Gamma_2\times \R$ via the map
\begin{equation*}
((g_1,s_2-r(g_2)),(g_2,s_2))\mapsto ((g_1,g_2),s_2),
\end{equation*}
and the multiplication $\tilde{m}$ on $\tilde{\Gamma}_2$ is defined by
\begin{equation*}
\tilde{m}((g_1,s_2-r(g_2)),(g_2,s_2)) = (m(g_1,g_2),s_2).
\end{equation*}
While, the inversion map of $\tilde{\Gamma}$ is $\tilde{\iota}(g,s) = (\iota(g), s-r(g))$ and the unit section of $\tilde{\Gamma}$ is $\tilde{\varepsilon}(x,s) = (\varepsilon(x),s)$.

\subsection{Twisted contact groupoids}
\begin{definition}
A \emph{twisted contact groupoid} is a Lie groupoid $\Gamma \overset{\alpha}{\underset{\beta}{\rightrightarrows}}\Gamma_0$ of dimension $2n+1$ equipped with a $\omega$-twisted contact form $\vartheta$, $\omega$ being a $2$-form on $\Gamma$ of type $\omega = \alpha^{\ast}\omega_0-e^{-r}\beta^{\ast}\omega_0$, where $\omega_0\in \Gamma(\bigwedge^2T^{\ast}\Gamma_0)$ and $r$ is a smooth function on $\Gamma$, such that
\begin{equation}\label{mult-cond}
m^{\ast}\vartheta = pr_2^{\ast}(e^{-r})\cdot pr_1^{\ast}\vartheta + pr_2^{\ast}\vartheta
\end{equation}
holds on the set of composable pairs $\Gamma_2$ of $\Gamma \times \Gamma$. In (\ref{mult-cond}), $pr_i : \Gamma_2\subset \Gamma \times \Gamma \to \Gamma$ denotes the projection on the $i$-factor, $i=1,2$, of $\Gamma_2$.
\end{definition}

In the next proposition we establish the basic properties of a twisted contact groupoid.
\begin{proposition}\label{propert-propos}
Let $(\Gamma \overset{\alpha}{\underset{\beta}{\rightrightarrows}}\Gamma_0, \vartheta,\omega,r)$ be a twisted contact groupoid and $(A(\Gamma),[\cdot,\cdot],\alpha_\ast)$ its Lie algebroid. We denote by $(\Lambda_{\Gamma},E_{\Gamma},\omega)$ the $(d\omega,\omega)$-twisted Jacobi structure on $\Gamma$ defined by $(\vartheta,\omega)$. Then
\begin{enumerate}
\item[i)]
$r$ is multiplicative, i.e., for any $(g,h)\in \Gamma_2$, $r(g\cdot h)=r(g)+r(h)$, it is annulated on the units of $\Gamma$, i.e., $r\circ \varepsilon =0$, and $r\circ \iota = -r$.
\item[ii)]
$\iota^*\vartheta = -e^r\vartheta$.
\item[iii)]
$\Gamma_0$, identified with $\varepsilon(\Gamma_0)$, is a Legendrian submanifold of $\Gamma$, i.e., $\dim\Gamma_0=n$ and $\varepsilon^*\vartheta = 0$.
\item[iv)]
$E_{\Gamma}$ is a left invariant vector field and it has $r$ as first integral.
\item[v)]
For any section $X_0^l$ of $A(\Gamma)\to \Gamma_0$, we write with $X^l$ (resp. $X^r$) the left (resp. right) invariant vector field on $\Gamma$ generated by $X_0^l$ (resp. $-\iota_{\ast}X_0^l$). Let $E_0^l$ be the section of $A(\Gamma)\to \Gamma_0$ that corresponds to $E_{\Gamma}$, i.e., $E_{\Gamma} = E^l$. Then
\begin{equation}\label{eq-hamil}
\Lambda_{\Gamma}^{\#}(dr) = E^l - e^r E^r.
\end{equation}
\item[vi)]
$\iota : \Gamma \to \Gamma$ is a $-e^{-r}$-twisted conformal Jacobi map.
\item[vii)]
A vector field $Y$ on $\Gamma$ is left (resp. right) invariant if and only if there exists $(\zeta_0,f_0)\in \Gamma(T^*\Gamma_0\times \R)$ such that
\begin{equation}\label{left-right}
Y = \Lambda_{\Gamma}^{\#}(\alpha^*\zeta_0)+\alpha^*f_0E^l \quad \quad (resp. \quad Y = e^{-r}\Lambda_{\Gamma}^{\#}(\beta^*\zeta_0)+\beta^*f_0E^r).
\end{equation}
\item[viii)]
For any pair $(f_0,g_0)$ of smooth functions on $\Gamma_0$, $\{\alpha^{\ast}f_0, e^{-r}\beta^{\ast}g_0\}=0$, where $\{\cdot,\cdot\}$ denotes the bracket (\ref{br-j}) on $C^{\infty}(\Gamma, \R)$ defined by $(\Lambda_{\Gamma},E_{\Gamma})$.
\end{enumerate}
\end{proposition}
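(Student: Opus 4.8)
Write $\Theta=d\vartheta+\omega$, and recall from (\ref{reeb})--(\ref{bivect-reeb}) that $\vartheta$ is nowhere zero, that $\Theta$ is nondegenerate on $\mathcal{H}=\ker\vartheta=\mathrm{Im}\,\Lambda_\Gamma^\#$ with $E_\Gamma$ spanning its one-dimensional kernel, and that $\Lambda_\Gamma^\#$ annihilates $\vartheta$ and inverts $\Theta|_{\mathcal H}$. The plan is to extract everything from the multiplicativity relation (\ref{mult-cond}) by pulling it back along canonical maps built from the groupoid structure. For i) I would pull (\ref{mult-cond}) back along the two maps from the set $\Gamma_3$ of composable triples to $\Gamma$ computing the triple product as $(g_1g_2)g_3$ and as $g_1(g_2g_3)$; associativity makes the two results coincide, and since they differ only in the coefficient of the nowhere-zero $1$-form $P_1^*\vartheta$ (with $P_1\colon\Gamma_3\to\Gamma$ the first projection), equating them forces $e^{-r}(g_2g_3)=e^{-r(g_2)}e^{-r(g_3)}$, i.e.\ $r$ multiplicative; then $r\circ\varepsilon=0$ follows from $\varepsilon(x)\cdot\varepsilon(x)=\varepsilon(x)$ and $r\circ\iota=-r$ from $g\cdot g^{-1}=\varepsilon(\beta(g))$. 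Pulling (\ref{mult-cond}) back along $g\mapsto(\varepsilon(\beta(g)),g)$ (whose product is $\mathrm{id}$ and along which $\mathrm{pr}_1=\varepsilon\circ\beta$) gives $\vartheta=e^{-r}\beta^*\varepsilon^*\vartheta+\vartheta$, hence $\varepsilon^*\vartheta=0$ since $\beta$ is a submersion; pulling it back along $g\mapsto(g,g^{-1})$ (whose product is $\varepsilon\circ\beta$ and along which $\mathrm{pr}_2^*e^{-r}=e^{r}$ by i)) gives $0=e^{r}\vartheta+\iota^*\vartheta$, which is ii).

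\noindent\textbf{Fiberwise behaviour; parts iv) and iii).} Restricting (\ref{mult-cond}) to the slice $\{g\}\times\beta^{-1}(\alpha(g))$ of $\Gamma_2$, on which $\mathrm{pr}_1$ is constant and $m$ reduces to the left translation $L_g$, shows that $\vartheta$ restricts to a left-invariant $1$-form along the $\beta$-fibers; restricting to $\alpha^{-1}(\beta(g))\times\{g\}$, on which $\mathrm{pr}_2$ is constant and $m$ reduces to $R_g$ while $\mathrm{pr}_2^*e^{-r}=e^{-r(g)}$, gives $R_g^*\vartheta=e^{-r(g)}\vartheta$ along the $\alpha$-fibers. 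As $\alpha\circ L_g=\alpha$, $\beta\circ R_g=\beta$ and $r$ is multiplicative, the same left-invariance along $\beta$-fibers and $e^{-r}$-conformal right-invariance along $\alpha$-fibers hold for $\omega=\alpha^*\omega_0-e^{-r}\beta^*\omega_0$, hence for $\Theta$. From this I would run one coupled argument --- the part I expect to be the main obstacle --- to obtain simultaneously: (a) $\ker\Theta=\mathbb RE_\Gamma$, being transported this way, is tangent to the $\beta$-fibers, so $E_\Gamma$ is $\beta$-fibre tangent and, by the left-invariance of $\vartheta$ and $\Theta$ there, left-invariant, and $E_\Gamma(r)=0$ (using the multiplicativity of $r$ once more); (b) at every point the tangent spaces to the source and to the target fibre are $\Theta$-orthogonal modulo $E_\Gamma$. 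Then iii) follows: $\varepsilon^*\vartheta=0$ gives $T\varepsilon(\Gamma_0)\subset\mathcal H$ and $\varepsilon^*\Theta=d(\varepsilon^*\vartheta)+(\omega_0-\omega_0)=0$, so $\varepsilon(\Gamma_0)$ is isotropic in $(\mathcal H,\Theta)$ and $\dim\Gamma_0\le n$, while (b) applied at a unit makes a complement of $\mathbb RE_\Gamma$ inside the source fibre isotropic too, giving $\dim\Gamma_0\ge n$; this is the twisted-contact analogue of $\Gamma_0$ being Lagrangian in a symplectic groupoid \cite{cx, bur-al}.

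\noindent\textbf{Parts v) and vi).} By (\ref{bivect-reeb}) and $E_\Gamma(r)=0$, $\Lambda_\Gamma^\#(dr)$ is the unique vector field $Y$ with $i(Y)\vartheta=0$ and $i(Y)\Theta=-dr$, so for v) it suffices to check $Y=E^l-e^rE^r$ meets both. For the first: $i(E^l)\vartheta=i(E_\Gamma)\vartheta=1$, whereas the $e^{-r}$-conformal right-invariance of $\vartheta$ along $\alpha$-fibers, evaluated against the unit $\varepsilon\circ\beta$ and using ii), gives $i(E^r)\vartheta=e^{-r}$, so $i(E^l-e^rE^r)\vartheta=0$. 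For the second: $i(E^l)\Theta=0$, and a Cartan-formula computation of $i(E^r)\Theta$, using $i(E^r)\vartheta=e^{-r}$, $\alpha_*E^r=0$ (so $i(E^r)\alpha^*\omega_0=0$) and the transformation law of $\vartheta$ for $\mathcal L_{E^r}\vartheta$, yields $i(e^rE^r)\Theta=dr$; this is (\ref{eq-hamil}). For vi): ii) gives $\iota^*\vartheta=-e^r\vartheta=\frac1a\vartheta$ with $a=-e^{-r}$, and a direct computation using $\alpha\circ\iota=\beta$, $\beta\circ\iota=\alpha$, $r\circ\iota=-r$ gives $\iota^*\omega=\beta^*\omega_0-e^r\alpha^*\omega_0=-e^r\omega=\frac1a\omega$. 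So $\iota$ carries the $\omega$-twisted contact pair $(\vartheta,\omega)$ to the $\frac1a\omega$-twisted contact pair $(\frac1a\vartheta,\frac1a\omega)$, whose induced twisted Jacobi structure is the $a$-conformal one $(\Lambda_\Gamma^a,E_\Gamma^a,\omega^a)$ \cite{jf-ten}; since the Reeb field and the bivector of a twisted contact pair solve equations written purely in terms of $\vartheta,\omega,\Theta$, any diffeomorphism conjugating twisted contact pairs this way is a twisted Jacobi map between the induced structures, so $\iota$ is an $a=-e^{-r}$-conformal twisted Jacobi map.

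\noindent\textbf{Parts vii) and viii).} For vii), $(\Leftarrow)$: $\alpha^*f_0\in C_L^\infty(\Gamma,\R)$ and $E^l=E_\Gamma$ is left-invariant, so $\alpha^*f_0\,E^l$ is; and $\Lambda_\Gamma^\#(\alpha^*\zeta_0)\in\mathcal H$, characterized by equations that restrict coherently to the $\beta$-fibers (by the left-invariance there of $\vartheta$, $\Theta$, $\alpha^*\zeta_0$, and by $E_\Gamma$ being $\beta$-fibre tangent), is forced --- via point (b) above at a unit --- to be $\beta$-fibre tangent, equivalently $\Lambda_\Gamma(\alpha^*\zeta_0,\beta^*\eta_0)=0$ for all $\eta_0$, and then left-invariant. $(\Rightarrow)$: by iii) the $C^\infty(\Gamma_0,\R)$-module of left-invariant vector fields has rank $\dim\Gamma-\dim\Gamma_0=n+1$, and $(\zeta_0,f_0)\mapsto\Lambda_\Gamma^\#(\alpha^*\zeta_0)+\alpha^*f_0E^l$ is pointwise injective --- pairing with $\vartheta$ kills $f_0$, and then $\Lambda_\Gamma^\#(\alpha^*\zeta_0)=0$ forces $\alpha^*\zeta_0\in\mathbb R\vartheta$, impossible for $\zeta_0\neq0$ because $\vartheta$ does not vanish on the source fibers --- so its image exhausts the left-invariant fields; the right-invariant statement is the $\iota$-image of this via vi). Finally, for viii), expanding $\{\alpha^*f_0,e^{-r}\beta^*g_0\}$ by (\ref{br-j}): iv) collapses the $E_\Gamma$-terms to $-e^{-r}\beta^*g_0\,\langle\alpha^*df_0,E_\Gamma\rangle$, v) gives $\Lambda_\Gamma(\alpha^*df_0,dr)=-\langle\alpha^*df_0,E^l\rangle+e^r\langle\alpha^*df_0,E^r\rangle=-\langle\alpha^*df_0,E_\Gamma\rangle$ (since $\alpha_*E^r=0$), and vii) gives $\Lambda_\Gamma(\alpha^*df_0,\beta^*dg_0)=0$; the two remaining terms cancel, so $\{\alpha^*f_0,e^{-r}\beta^*g_0\}=0$.
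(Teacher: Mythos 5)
Your handling of i), ii), the isotropy half of iii), vi) and viii) is sound and close in spirit to the paper's (pulling \eqref{mult-cond} back along maps built from the groupoid structure is just a cleaner packaging of the tangent-groupoid computations). But there is a genuine gap exactly where you yourself flag ``the main obstacle'': the ``one coupled argument'' that is supposed to deliver (a) $E_{\Gamma}$ tangent to the $\beta$-fibres, left invariant, with $E_{\Gamma}(r)=0$, and (b) $\Theta$-orthogonality of the $\alpha$- and $\beta$-fibre tangent spaces modulo $E_{\Gamma}$, is never given, and the heuristic you offer for it does not work. Knowing that $\vartheta$ and $\Theta=d\vartheta+\omega$ \emph{restricted to the fibres} are left invariant (resp.\ $e^{-r}$-conformally right invariant) says nothing about where the kernel line $\ker\Theta$ sits inside $T\Gamma$; ``being transported this way, is tangent to the $\beta$-fibers'' is an assertion, not an argument, and ``$E_{\Gamma}(r)=0$ (using the multiplicativity of $r$ once more)'' likewise. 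Since (a) is the whole of iv), and (b) is what you use for the bound $\dim\Gamma_0\ge n$ in iii), for the left-invariance of $\Lambda_{\Gamma}^{\#}(\alpha^*\zeta_0)$ in vii), and (through $T\beta\circ\Lambda_{\Gamma}^{\#}\circ{}^tT\alpha=0$) for viii), a large part of the proposition is resting on an unproved statement. For comparison, the paper proves iv) by differentiating \eqref{mult-cond} to get the identity \eqref{ext-dif-theta} and evaluating it on specific pairs of vectors written as tangent-groupoid products, e.g.\ $(E^l_g,E_{\Gamma g})$ with $E^l_g=0_g\oplus E^l_{\varepsilon(\alpha(g))}$ and $E_{\Gamma g}=E_{\Gamma g}\oplus\varepsilon_\ast(\alpha_\ast E_{\Gamma})$, using $\varepsilon^*\vartheta=0$, $r\circ\varepsilon=0$ and, crucially for $E_{\Gamma}(r)=0$, the special form $\omega=\alpha^{\ast}\omega_0-e^{-r}\beta^{\ast}\omega_0$; the dimension count and the characterization of invariant vector fields are then obtained by running Dazord's symplectic-groupoid argument for the pair $(\mathcal{H},d\vartheta+\omega)$, which is precisely the content of your missing item (b).

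A secondary weak point of the same nature occurs in v): your Cartan-formula step needs $\mathcal{L}_{E^r}\vartheta$ as a $1$-form on all of $T\Gamma$, but the flow of a right-invariant vector field is not a family of right translations $R_h$, so the fibrewise law $R_g^{\ast}\vartheta=e^{-r(g)}\vartheta$ along $\alpha$-fibres does not determine $\mathcal{L}_{E^r}\vartheta$ in the transverse directions. The paper avoids this by again applying \eqref{ext-dif-theta} to the pair $(E^r,X_{e^{-r}})$ and identifying $E^r$ with the Hamiltonian vector field $X_{e^{-r}}=\Lambda_{\Gamma}^{\#}(de^{-r})+e^{-r}E_{\Gamma}$, from which \eqref{eq-hamil} follows. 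To complete your proposal you would either have to reproduce these differentiated-multiplicativity computations (or Dazord's argument) for (a), (b) and for $\mathcal{L}_{E^r}\vartheta$, or find a genuinely different proof of them; as written, the proposal assumes them.
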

\begin{proof}
i) We consider the tangent groupoid $T\Gamma \overset{T\alpha}{\underset{T\beta}{\rightrightarrows}}T\Gamma_0$ of $\Gamma\overset{\alpha}{\underset{\beta}{\rightrightarrows}}\Gamma_0$ \cite{mrl-n}. The set of composable pairs in $T\Gamma \times T\Gamma$ is $T\Gamma_2$ with composition law $\oplus$ the tangent map $Tm$ of $m$, i.e., for any pair $(X,Y)\in T_{(g,h)}\Gamma_2$, where $X=\frac{dg(t)}{dt}\vert_{t=0}\in T_{g}\Gamma$, $Y=\frac{dh(t)}{dt}\vert_{t=0}\in T_{h}\Gamma$, and $(g(t),h(t))$, $t\in \R$, is a path in $\Gamma_2$ with $(g,h)=(g(0),h(0))$,
\begin{equation*}
X\oplus Y = T_{(g,h)}m(X,Y) = \frac{d}{dt}(g(t)\cdot h(t))\vert_{t=0}.
\end{equation*}
From the associativity of $Tm$ and (\ref{mult-cond}) we deduce the multiplicativity of $r$. Also, for any $g\in \Gamma$, we have $r(g)=r(g\cdot \varepsilon(\alpha(g))) = r(g) + r(\varepsilon(\alpha(g)))$ and $r(g)=r(\varepsilon(\beta(g))\cdot g)=r(\varepsilon(\beta(g))) + r(g)$, whence, taking into account the surjectivity of the submersions $\alpha$ and $\beta$, we get $r\circ \varepsilon = 0$. Furthermore, from the relation $g^{-1}\cdot g = \varepsilon(\alpha(g))$ and the above results, we take $r(g^{-1}\cdot g) = r(\varepsilon(\alpha(g))) \Leftrightarrow r(g^{-1})+r(g)=0 \Leftrightarrow (r\circ \iota)(g) = -r(g)$, for any $g \in \Gamma$. Thus, $r\circ \iota = -r$.

\vspace{1mm}
\noindent
ii) For any $g\in \Gamma$ and $X\in \Gamma(T\Gamma)$,
\begin{equation}\label{mult-vect}
X_g = X_g \oplus \varepsilon_\ast(\alpha_\ast X)_{\varepsilon(\alpha(g))} = \varepsilon_\ast(\beta_\ast X)_{\varepsilon(\beta(g))}\oplus X_g,
\end{equation}
since $g=g\cdot \varepsilon(\alpha(g))=\varepsilon(\beta(g))\cdot g$. So, from the multiplicativity (\ref{mult-cond}) of $\vartheta$, we get
\begin{equation*}
\vartheta_g(X_g)= \vartheta_{g\cdot \varepsilon(\alpha(g))}(X_g \oplus \varepsilon_\ast(\alpha_\ast X)_{\varepsilon(\alpha(g))})=e^{-r(\varepsilon(\alpha(g)))}\vartheta_g(X_g) + \vartheta_{\varepsilon(\alpha(g))}(\varepsilon_\ast(\alpha_\ast X)_{\varepsilon(\alpha(g))})
\end{equation*}
and
\begin{equation*}
\vartheta_g(X_g)= \vartheta_{\varepsilon(\beta(g))\cdot g}(\varepsilon_\ast(\beta_\ast X)_{\varepsilon(\beta(g))}\oplus X_g)=e^{-r(g)}\vartheta_{\varepsilon(\beta(g))}(\varepsilon_\ast(\beta_\ast X)_{\varepsilon(\beta(g))}) + \vartheta_g(X_g),
\end{equation*}
whence, taking into consideration of $r\circ \varepsilon = 0$ and of surjectivity of $T\alpha$ and $T\beta$, we obtain $\varepsilon^{\ast}\vartheta =0$. Moreover, from the properties of $\iota$ and (\ref{mult-cond}), we get
\begin{equation*}
\vartheta_{\varepsilon(\alpha(g))} =\vartheta_{g^{-1}\cdot g} = e^{-r(g)}\vartheta_{g^{-1}} + \vartheta_g \Leftrightarrow 0  =  e^{-r(g)}\vartheta_{g^{-1}} + \vartheta_g \Leftrightarrow
\vartheta_{g^{-1}} = - e^{r(g)} \vartheta_g, \quad \forall g\in \Gamma,
\end{equation*}
which means that $\iota^\ast \vartheta = -e^r\vartheta$.

\vspace{1mm}
\noindent
iii) In (ii) we proved $\varepsilon^{\ast}\vartheta =0$. So, in order to establish that $\Gamma_0\equiv \varepsilon(\Gamma_0)$ is a Legendrian submanifold of $\Gamma$, it is enough to show that $\dim \Gamma_0=n$. We consider the horizontal bundle $\mathcal{H}$ on $\Gamma$ which is of type $\mathcal{H}=\ker \vartheta$ and, for any $g\in \Gamma$, we denote by $\mathcal{H}_g$ the hyperplane through $g$. Because $\iota^\ast \vartheta = -e^r\vartheta$, $\mathcal{H}$ is invariant by inversion. Furthermore, for any $(X,Y)\in \mathcal{H}_{g}\times \mathcal{H}_{h}$ with $(g,h)\in \Gamma_2$, $m^\ast \vartheta_{(g,h)} (X,Y) \stackrel{(\ref{mult-cond})}{=}e^{-r(h)}\vartheta_{g}(X)+\vartheta_{h}(Y)\Leftrightarrow \vartheta_{g\cdot h}(X\oplus Y)=0\Leftrightarrow X\oplus Y\in \ker\vartheta_{g\cdot h}=\mathcal{H}_{g\cdot h}$, which means that $\mathcal{H}$ is closed with respect to $Tm$. Hence, by developing the argumentation of P. Dazord \cite{dz} for the pair $(\mathcal{H}, d\vartheta + \omega)$, we get $\dim \Gamma_0 =n$.

\vspace{1mm}
\noindent
iv) At every point of $\varepsilon(\Gamma_0)\equiv \Gamma_0$, the vector field $E_{\Gamma}$ can be decomposed as $E_{\Gamma}=E_0^l + \varepsilon_\ast(\beta_\ast(E_{\Gamma}))$, where $E_0^l$ is the tangent component to the $\beta$-fibre through the considered point. Let $E^l$ be the left invariant vector field on $\Gamma$ generated by $E_0^l$. We have, at each $g\in \Gamma$, $E_g^l = 0_g\oplus E^l_{\varepsilon(\alpha(g))}$. Therefore, $\vartheta_g(E^l_g)=\vartheta_{g\cdot \varepsilon(\alpha(g))}(0_g\oplus E^l_{\varepsilon(\alpha(g))})\stackrel{(\ref{mult-cond})}{=}
e^{-r(\varepsilon(\alpha(g)))}\vartheta_g(0_g)+\vartheta_{\varepsilon(\alpha(g))}(E^l_{\varepsilon(\alpha(g))})=
\vartheta_{\varepsilon(\alpha(g))}(E^l_{\varepsilon(\alpha(g))})$. But, $1=\vartheta_{\varepsilon(\alpha(g))}(E_{\Gamma \varepsilon(\alpha(g))})=\vartheta_{\varepsilon(\alpha(g))}(E_{\varepsilon(\alpha(g))}^l +
\varepsilon_\ast(\beta_\ast(E_{\Gamma}))_{\varepsilon(\alpha(g))})=\vartheta_{\varepsilon(\alpha(g))}(E_{\varepsilon(\alpha(g))}^l)$. Consequently,
\begin{equation}\label{theta-left}
\vartheta_g(E^l_g) =1.
\end{equation}
By exterior differentiation of (\ref{mult-cond}), we obtain
\begin{equation}\label{ext-dif-theta}
m^\ast d\vartheta = pr_2^\ast de^{-r}\wedge pr_1^\ast \vartheta + pr_2^\ast e^{-r}\cdot pr_1^\ast d\vartheta + pr_2^\ast d\vartheta,
\end{equation}
and we apply (\ref{ext-dif-theta}) to the pair $(E^l_g,E_{\Gamma g})$ of vectors at the point $g=g\cdot \varepsilon(\alpha(g))$. Because $r\circ \varepsilon = 0$, $E_g^l = 0_g \oplus E^l_{\varepsilon(\alpha(g))}$ and $E_{\Gamma g} = E_{\Gamma g}\oplus \varepsilon_\ast(\alpha_\ast(E_{\Gamma}))_{\varepsilon(\alpha(g))}$, we take
\begin{eqnarray}\label{theta to omega}
d\vartheta_g (E^l_g,E_{\Gamma g}) & = & de^{-r(\varepsilon(\alpha(g)))}(E^l_{\varepsilon(\alpha(g))}) \vartheta_g(E_{\Gamma g}) + d\vartheta_{\varepsilon(\alpha(g))}(E^l_{\varepsilon(\alpha(g))},\varepsilon_\ast(\alpha_\ast E_{\Gamma})_{\varepsilon(\alpha(g))})
 \stackrel{(\ref{reeb})}{\Leftrightarrow} \nonumber \\ \omega_g(E_{\Gamma g},E^l_g)  & =&  de^{-r(\varepsilon(\alpha(g)))}(E^l_{\varepsilon(\alpha(g))})  + d\vartheta_{\varepsilon(\alpha(g))}(E^l_{\varepsilon(\alpha(g))},\varepsilon_\ast(\alpha_\ast E_{\Gamma})_{\varepsilon(\alpha(g))}). \end{eqnarray}
We compare $d\vartheta_{\varepsilon(\alpha(g))}(E^l_{\varepsilon(\alpha(g))},\varepsilon_\ast(\alpha_\ast(E_{\Gamma}))_{\varepsilon(\alpha(g))})$ with $d\vartheta_g (E^l_g,E_{\Gamma g})$. From (\ref{theta-left}) and the fact $\varepsilon^\ast \vartheta = 0$, we get
\begin{equation*}\label{eqA}
d\vartheta_{\varepsilon(\alpha(g))}(E^l_{\varepsilon(\alpha(g))},\varepsilon_\ast(\alpha_\ast(E_{\Gamma}))_{\varepsilon(\alpha(g))}) = -\vartheta_{\varepsilon(\alpha(g))}([E^l_{\varepsilon(\alpha(g))},\varepsilon_\ast(\alpha_\ast(E_{\Gamma}))_{\varepsilon(\alpha(g))}])
\end{equation*}
and
\begin{equation*}\label{eqB}
d\vartheta_g (E^l_g,E_{\Gamma g})=-\vartheta_g([E^l_g,E_{\Gamma g}]).
\end{equation*}
But,
\begin{eqnarray}\label{rel []}
[E^l_g,E_{\Gamma g}] & =& [0_g \oplus E^l_{\varepsilon(\alpha(g))},E_{\Gamma g}\oplus\varepsilon_\ast(\alpha_\ast(E_{\Gamma }))_{\varepsilon(\alpha(g))}]\nonumber \\
&  = &[0_g,E_{\Gamma g}]\oplus[E^l_{\varepsilon(\alpha(g))},\varepsilon_\ast(\alpha_\ast(E_{\Gamma}))_{\varepsilon(\alpha(g))}].
\end{eqnarray}
So, $\vartheta_g([E^l_g,E_{\Gamma g}])\stackrel{(\ref{mult-cond})(\ref{rel []})}{=}\vartheta_{\varepsilon(\alpha(g))}([E^l_{\varepsilon(\alpha(g))},\varepsilon_\ast(\alpha_\ast(E_{\Gamma }))_{\varepsilon(\alpha(g))}])$, which yields
\begin{equation}\label{eqC}
d\vartheta_{\varepsilon(\alpha(g))}(E^l_{\varepsilon(\alpha(g))},\varepsilon_\ast(\alpha_\ast(E_{\Gamma}))_{\varepsilon(\alpha(g))}) = d\vartheta_g (E^l_g,E_{\Gamma g}).
\end{equation}
Consequently, (\ref{theta to omega}) can be written as
\begin{equation}\label{eqD}
(d\vartheta_g+\omega_g)(E_{\Gamma g},E^l_g) = de^{-r(\varepsilon(\alpha(g)))}(E^l_{\varepsilon(\alpha(g))})\cdot \vartheta_g(E_{\Gamma g}),
\end{equation}
whence, taking into account (\ref{reeb}), we deduce that $de^{-r(\varepsilon(\alpha(g)))}(E^l_{\varepsilon(\alpha(g))})=0$. Hence, from the last equation and (\ref{eqD}), we take
\begin{equation*}
(d\vartheta_g+\omega_g)(E^l_g, \cdot) = - de^{-r(\varepsilon(\alpha(g)))}(E^l_{\varepsilon(\alpha(g))})\cdot \vartheta_g =0,
\end{equation*}
which means that $E_g^l$ and $E_{\Gamma g}$ are parallel vectors of $T_g\Gamma$. Also, we have $\vartheta_g(E_{\Gamma g})\stackrel{(\ref{reeb})}{=}1 \stackrel{(\ref{theta-left})}{=}\vartheta_g(E^l_g)$. Hence, we conclude that, for any $g\in \Gamma$, $E_{\Gamma g} = E^l_g$, which means that $E_{\Gamma}$ is left invariant.

Next, we will show that $r$ is a first integral of $E_{\Gamma}$. At every point of $\varepsilon(\Gamma_0)\equiv \Gamma_0$, the vector field $E_{\Gamma}$ can be decomposed as $E_{\Gamma}=E_0^r + \varepsilon_\ast(\alpha_\ast(E_{\Gamma}))$, where $E_0^r$ is the tangent component to the $\alpha$-fibre through the considered point. Let $E^r$ be the right invariant vector field on $\Gamma$ generated by $E_0^r$. Because, for any $g \in \Gamma$, $g=\varepsilon(\beta(g))\cdot g$ and $E^r_g = E^r_{\varepsilon(\beta(g))}\oplus 0_g$, $\vartheta_g(E^r_g)=\vartheta_{\varepsilon(\beta(g))\cdot g}(E^r_{\varepsilon(\beta(g))}\oplus 0_g)\stackrel{(\ref{mult-cond})}{=}e^{-r(g)}\vartheta_{\varepsilon(\beta(g))}(E^r_{\varepsilon(\beta(g))})$. But, $1\stackrel{(\ref{reeb})}{=}\vartheta_{\varepsilon(\beta(g))}(E_{\Gamma \varepsilon(\beta(g))})=\vartheta_{\varepsilon(\beta(g))}(E^r_{\varepsilon(\beta(g))})$. So, $\vartheta_g(E^r_g)=e^{-r(g)}$. Also,
\begin{eqnarray*}
d\vartheta_g(E^r_g,E_{\Gamma g})  =  d\vartheta_{\varepsilon(\beta(g))\cdot g}(E^r_{\varepsilon(\beta(g))}\oplus 0_g, 0_{\varepsilon(\beta(g))}\oplus E_{\Gamma g})   & \stackrel{(\ref{reeb})(\ref{ext-dif-theta})}{\Leftrightarrow}& \\
\omega_g(E_{\Gamma g},E^r_g)  = -de^{-r(g)}(E_{\Gamma g})\cdot \vartheta_{\varepsilon(\beta(g))}(E^r_{\varepsilon(\beta(g))})  & \Leftrightarrow & \\
\omega_{0 \alpha(g)}(\alpha_{\ast}(E_{\Gamma})_{\alpha(g)},\alpha_{\ast}(E^r)_{\alpha(g)}) - \omega_{0 \beta(g)}(\beta_{\ast}(E_{\Gamma})_{\beta(g)},\beta_\ast(E^r)_{\beta(g)})   =  -de^{-r(g)}(E_{\Gamma g})   & \Leftrightarrow & \\
0 =-de^{-r(g)}(E_{\Gamma g}),
\end{eqnarray*}
whence we conclude $\mathcal{L}_{E_{\Gamma}}r = 0$.

\vspace{1mm}
\noindent
v) We consider the hamiltonian vector field $X_{e^{-r}}=\Lambda_{\Gamma}^{\#}(de^{-r})+e^{-r}E_{\Gamma}$ of $e^{-r}$ and the right invariant vector field $E^r$ on $\Gamma$ defined in the proof of (v). We apply (\ref{ext-dif-theta}) to the pair $(E^r,X_{e^{-r}})$ at a point $g=\varepsilon(\beta(g))\cdot g$ of $\Gamma$ and we prove, as in (iv), that $X_{e^{-r}} = E^r$. Since $E_{\Gamma}=E^l$ at the points of $\varepsilon(\Gamma_0)\equiv \Gamma_0$, we have $E_0^l = E_0^r + \varepsilon_{\ast}(\alpha_{\ast}(E_0^l))$. Thus, $0= \beta_\ast(E_0^r + \varepsilon_{\ast}(\alpha_{\ast}(E_0^l))) = \beta_{\ast}(E_0^r) + \alpha_{\ast}(E_0^l) = \beta_{\ast}(E_0^r) + \beta_{\ast}(\iota_{\ast}E_0^l)$. Because $E_0^r + \iota_{\ast}E_0^l$ is tangent to $\alpha$-fibers, the above equation yields that $E_0^r + \iota_{\ast}E_0^l = 0$, whence we deduce that $E^r$ coincide with the right invariant vector field on $\Gamma$ generated by $-\iota_{\ast}E_0^l$. Hence, (\ref{eq-hamil}) holds.

\vspace{1mm}
\noindent
vi) It is enough to prove that $\iota_{\ast}(-e^{-r}\Lambda_{\Gamma})=\Lambda_{\Gamma}$, $\iota_{\ast}X_{-e^{-r}}=E_{\Gamma}$ and $\iota^\ast\omega = -e^r\omega$. Since $\alpha \circ \iota = \beta$, $\beta \circ \iota = \alpha$ and $r\circ \iota = -r$, we have $\iota^\ast \omega = \iota^\ast(\alpha^\ast \omega_0 - e^{-r}\beta^\ast\omega_0)= \beta^\ast \omega_0 - e^{-r\circ \iota}\alpha^\ast \omega_0 = - e^r\omega$. Also, $\iota_{\ast}X_{-e^{-r}} = \iota_{\ast}(-E^r) = E^l$ and, for any $\zeta \in T^\ast _{g^{-1}}\Gamma$,
$$
\begin{array}{l}
\iota^\ast [i\big((T_g\iota  (-e^{-r(g)}\Lambda_{\Gamma g})^\# \,^tT_g\iota)(\zeta) \big)(d\vartheta +\omega)_{g^{-1}}] =   i\big((-e^{-r(g)}\Lambda_{\Gamma g})^\#(\iota^\ast \zeta)\big)\iota^\ast[(d\vartheta +\omega)_{g^{-1}}]  \nonumber \\
= i\big((-e^{-r(g)}\Lambda_{\Gamma g})^\#(\iota^\ast \zeta)\big) [-e^{r(g)}(d\vartheta + \omega)_g -e^{r(g)}dr\wedge \vartheta_g]    \nonumber \\
  = i\big( \Lambda_{\Gamma g}^\#(\iota^\ast \zeta)\big)(d\vartheta + \omega)_g + \Lambda_{\Gamma g}\big(\iota^\ast\zeta,dr \big)\vartheta_g   \nonumber \\
  \stackrel{(\ref{bivect-reeb})(\ref{eq-hamil})}{=}  - \iota^\ast \zeta + \langle \iota^\ast \zeta, E_{\Gamma g}\rangle \vartheta_g - \langle \iota^\ast \zeta, E_g^l\rangle \vartheta_g  + \langle \iota^\ast \zeta, e^{r(g)}E^r_g\rangle \vartheta_g  \nonumber \\
 \stackrel{(ii)}{=}  - \iota^\ast \zeta - \langle \zeta, \iota_\ast E^r_g\rangle \iota^\ast(\vartheta_{g^{-1}})
 =  \iota^\ast \big[-\zeta + \langle \zeta, E_{\Gamma g^{-1}}\rangle \vartheta_{g^{-1}}\big] = \iota^\ast [i(\Lambda_{\Gamma g^{-1}}^\#(\zeta))(d\vartheta + \omega)_{g^{-1}}],
\end{array}
$$
whence we obtain $T\iota \circ (-e^{-r}\Lambda_{\Gamma})^\#\circ\, ^tT\iota  = \Lambda_{\Gamma}^\#$, since $\mathcal{H}=\mathrm{Im}\Lambda_{\Gamma}^{\#}$ is invariant by $\iota$.

\vspace{1mm}
\noindent
vii) We establish (\ref{left-right}) as in Proposition 4.3 of \cite{dz}. For the proof we use the pair $(\mathcal{H},d\vartheta + \omega)$ and the fact that $\omega = \alpha^\ast \omega_0 - e^{-r}\beta^\ast \omega_0$.

\vspace{1mm}
\noindent
viii) We consider a pair $(f_0,g_0)$ of smooth functions on $\Gamma_0$ and we compute the bracket $\{\alpha^\ast f_0, e^{-r}\beta^\ast g_0\}$. Since $T\beta \circ \Lambda_{\Gamma}^{\#}\circ \,^tT\alpha =0$ (see \cite{dz}), $\mathcal{L}_{E_{\Gamma}}r = 0$ and $\beta_\ast(E_{\Gamma})=0$, we have
\begin{equation*}
\{\alpha^\ast f_0, e^{-r}\beta^\ast g_0\} = e^{-r}\beta^\ast g_0 \,\langle \alpha^\ast df_0, \Lambda_{\Gamma}^{\#}(dr)-E_{\Gamma}\rangle.
\end{equation*}
But, in (v) we proved that $\Lambda_{\Gamma}^{\#}(dr)-E_{\Gamma}=-e^rX_{e^{-r}}$ and $X_{e^{-r}}$ is right invariant. So, $\alpha_\ast (\Lambda_{\Gamma}^{\#}(dr)-E_{\Gamma})=0$ and $\{\alpha^\ast f_0, e^{-r}\beta^\ast g_0\} =0$. The last equation means that the twisted contact hamiltonian vector fields corresponding to the left invariant functions of $\Gamma$ have as first integrals the right invariant functions of $\Gamma$ multiplied by $e^{-r}$, and reciprocally.
\end{proof}

\vspace{2mm}

Now, we can formulate the following theorem.
\begin{theorem}\label{th-ind-jac}
Let  $(\Gamma \overset{\alpha}{\underset{\beta}{\rightrightarrows}}\Gamma_0,\vartheta,\omega,r)$, $\omega = \alpha^{\ast}\omega_0-e^{-r}\beta^{\ast}\omega_0$ with $\omega_0\in \Gamma(\bigwedge^2T^{\ast}\Gamma_0)$, be a twisted contact groupoid, $(A(\Gamma),[\cdot , \cdot], T\alpha)$ its associated Lie algebroid, and $(\Lambda_{\Gamma},E_{\Gamma},\omega)$ the twisted Jacobi structure on $\Gamma$ defined by the $\omega$-twisted contact form $\vartheta$. Then
\begin{enumerate}
\item[1)]
$(\Lambda_{\Gamma},E_{\Gamma},\omega)$ induces on $\Gamma_0$ a twisted Jacobi structure $(\Lambda_0,E_0,\omega_0)$ such that $\alpha : \Gamma\to \Gamma_0$ is a twisted Jacobi map and $\beta: \Gamma \to \Gamma_0$ is a $-e^{-r}$-conformal twisted Jacobi map.
\item[2)]
The map
\begin{equation*}\label{isom}
\begin{tabular}{rcl}
$\mathfrak{I} : \Gamma(T^*\Gamma_0\times \R)$ & $\longrightarrow$ & $\Gamma(A(\Gamma))$ \\
$(\zeta_0,f_0)$ & $\mapsto$ & $\Lambda_{\Gamma}^{\#}(\alpha^*\zeta_0)+\alpha^*f_0E^l$
\end{tabular}
\end{equation*}
induces a Lie algebroid isomorphism between $(T^*\Gamma_0\times \R,\{\cdot,\cdot\}^{\omega_0}, \pi \circ (\Lambda_0,E_0)^\#)$, which is the Lie algebroid over $\Gamma_0$ determined by $(\Lambda_0,E_0,\omega_0)$, and $(A(\Gamma),[\cdot , \cdot], T\alpha)$.
\end{enumerate}
\end{theorem}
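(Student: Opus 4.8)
The plan is to follow P. Dazord's treatment of contact groupoids \cite{dz}, inserting the twisted refinements recorded in Proposition~\ref{propert-propos}. Everything hinges on one structural observation: both $\Lambda_{\Gamma}$ and $E_{\Gamma}$ are projectable along the source map $\alpha$. Indeed, $E_{\Gamma}=E^{l}$ is left invariant by Proposition~\ref{propert-propos}(iv), and, taking $f_{0}=0$ in Proposition~\ref{propert-propos}(vii), so is every $\Lambda_{\Gamma}^{\#}(\alpha^{\ast}\zeta_{0})$. Since left invariant vector fields commute with right invariant ones, and the right invariant vector fields span $\ker T\alpha$ pointwise, any left invariant $Y$ satisfies $[Y,Z]\in\Gamma(\ker T\alpha)$ for every $\alpha$-vertical $Z$, hence is $\alpha$-projectable. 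Thus $\Lambda_{\Gamma}$ and $E_{\Gamma}$ descend to a bivector field $\Lambda_{0}=\alpha_{\ast}\Lambda_{\Gamma}$ (whose skew-symmetry is inherited from that of $\Lambda_{\Gamma}$) and a vector field $E_{0}=\alpha_{\ast}E_{\Gamma}=T\alpha(E_{0}^{l})$ on $\Gamma_{0}$.

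For part 1) I would take $\omega_{0}$ to be the $2$-form of the hypothesis $\omega=\alpha^{\ast}\omega_{0}-e^{-r}\beta^{\ast}\omega_{0}$ and push the two equations of (\ref{equiv-tj}) forward along $\alpha$. The left-hand sides descend by themselves, being Schouten expressions in the projectable fields $\Lambda_{\Gamma},E_{\Gamma}$. For the right-hand sides one observes that as soon as $\Lambda_{\Gamma}^{\#}$ is fed $\alpha^{\ast}$-forms, every argument occurring becomes left invariant, so its $T\beta$-image vanishes; therefore the term $e^{-r}\beta^{\ast}\omega_{0}$ of $\omega$ and the terms $de^{-r}\wedge\beta^{\ast}\omega_{0}$, $e^{-r}\beta^{\ast}d\omega_{0}$ of $d\omega$ drop out, and only the $\alpha^{\ast}$-parts survive, reproducing $\Lambda_{0}^{\#}(d\omega_{0})$, $\Lambda_{0}^{\#}(\omega_{0})$ and the matching $(\Lambda_{0}^{\#}\otimes 1)(\cdot)(E_{0})$ terms on $\Gamma_{0}$; hence $(\Lambda_{0},E_{0},\omega_{0})$ satisfies (\ref{equiv-tj}). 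A direct evaluation of the bracket (\ref{br-j}) on $\alpha$-basic functions gives $\{\alpha^{\ast}f_{0},\alpha^{\ast}g_{0}\}=\alpha^{\ast}\{f_{0},g_{0}\}_{0}$, which together with $\alpha_{\ast}\Lambda_{\Gamma}=\Lambda_{0}$, $\alpha_{\ast}E_{\Gamma}=E_{0}$ and the form of $\omega$ yields that $\alpha$ is a twisted Jacobi map; and since $\beta=\alpha\circ\iota$ while $\iota$ is a $-e^{-r}$-conformal twisted Jacobi map by Proposition~\ref{propert-propos}(vi), the composite $\beta$ is a $-e^{-r}$-conformal twisted Jacobi map.

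For part 2) I identify $\Gamma(A(\Gamma))$ with the Lie algebra of left invariant vector fields on $\Gamma$. By Proposition~\ref{propert-propos}(vii), $\mathfrak{I}$ maps into $\Gamma(A(\Gamma))$ and is surjective; it is visibly $C^{\infty}(\Gamma_{0},\R)$-linear, and injective because contracting $\mathfrak{I}(\zeta_{0},f_{0})=\Lambda_{\Gamma}^{\#}(\alpha^{\ast}\zeta_{0})+\alpha^{\ast}f_{0}\,E^{l}$ with $\vartheta$ returns $\alpha^{\ast}f_{0}$ (using $\vartheta(E^{l})=1$ and $\mathrm{Im}\,\Lambda_{\Gamma}^{\#}\subseteq\ker\vartheta$), while contracting with the right invariant field $E^{r}$, for which $\vartheta(E^{r})=e^{-r}\neq 0$, then forces $\alpha^{\ast}\zeta_{0}=0$. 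So $\mathfrak{I}$ is a vector bundle isomorphism over $\mathrm{id}_{\Gamma_{0}}$, and $T\alpha\circ\mathfrak{I}(\zeta_{0},f_{0})=\Lambda_{0}^{\#}(\zeta_{0})+f_{0}E_{0}=\pi\circ(\Lambda_{0},E_{0})^{\#}(\zeta_{0},f_{0})$ gives compatibility with the anchors at once. For the brackets, by the Leibniz rule and $C^{\infty}(\Gamma_{0})$-linearity it suffices to verify $\mathfrak{I}(\{(df_{0},f_{0}),(dg_{0},g_{0})\}^{\omega_{0}})=[\mathfrak{I}(df_{0},f_{0}),\mathfrak{I}(dg_{0},g_{0})]$ on the generating sections $(df_{0},f_{0})$, $f_{0}\in C^{\infty}(\Gamma_{0},\R)$. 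There $\mathfrak{I}(df_{0},f_{0})=\Lambda_{\Gamma}^{\#}(d\alpha^{\ast}f_{0})+\alpha^{\ast}f_{0}\,E_{\Gamma}=X_{\alpha^{\ast}f_{0}}$, the hamiltonian vector field of $\alpha^{\ast}f_{0}$; since the anchor $\pi\circ(\Lambda_{\Gamma},E_{\Gamma})^{\#}$ of the Lie algebroid $(T^{\ast}\Gamma\times\R,\{\cdot,\cdot\}^{\omega},\pi\circ(\Lambda_{\Gamma},E_{\Gamma})^{\#})$ is a Lie algebroid morphism onto $(T\Gamma,[\cdot,\cdot])$, one gets $[X_{\alpha^{\ast}f_{0}},X_{\alpha^{\ast}g_{0}}]=\pi\circ(\Lambda_{\Gamma},E_{\Gamma})^{\#}\big(\{(d\alpha^{\ast}f_{0},\alpha^{\ast}f_{0}),(d\alpha^{\ast}g_{0},\alpha^{\ast}g_{0})\}^{\omega}\big)$. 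Expanding this last bracket by (\ref{croch-form}) and the identity displayed immediately after it, and using $\{\alpha^{\ast}f_{0},\alpha^{\ast}g_{0}\}=\alpha^{\ast}\{f_{0},g_{0}\}_{0}$ together with the vanishing of the $\beta^{\ast}\omega_{0}$-, $\beta^{\ast}d\omega_{0}$- and $de^{-r}\wedge\beta^{\ast}\omega_{0}$-contributions against left invariant vectors, the right-hand side collapses to $\mathfrak{I}(\{(df_{0},f_{0}),(dg_{0},g_{0})\}^{\omega_{0}})$, as required.

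The step I expect to be the real obstacle is this final bracket identity: one must follow the twist term $(d\omega,\omega)\big((\Lambda_{\Gamma},E_{\Gamma})^{\#}\,\cdot\,,(\Lambda_{\Gamma},E_{\Gamma})^{\#}\,\cdot\,,\cdot\big)$ of (\ref{croch-form}) with care, showing that on $\alpha$-basic (equivalently left invariant) data only its $\alpha^{\ast}\omega_{0}$-component survives and that this surviving piece is exactly the $\alpha$-pullback of the twist term entering $\{\cdot,\cdot\}^{\omega_{0}}$ on $\Gamma_{0}$. The very same accounting of which summands are annihilated by left invariant arguments is what makes the structure equations descend in part 1), so controlling this bookkeeping --- and keeping track of all the signs in (\ref{formule-homo}) and in the conventions of \cite{im} for $(\Lambda^{\#}\otimes 1)$ and for the $3$-form $(d\omega,\omega)$ --- is the heart of the argument.
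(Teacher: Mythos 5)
Your proposal is correct and takes essentially the same route as the paper: both parts rest on Proposition \ref{propert-propos} (left invariance of $E_{\Gamma}$ and of $\Lambda_{\Gamma}^{\#}(\alpha^{\ast}\zeta_0)$, the inversion being a $-e^{-r}$-conformal twisted Jacobi map, $T\beta \circ \Lambda_{\Gamma}^{\#}\circ\,^tT\alpha = 0$) and on the same bookkeeping showing that the $e^{-r}\beta^{\ast}\omega_0$-contributions to $\omega$ and $d\omega$ vanish against left invariant arguments. The only differences are organizational: you project (\ref{equiv-tj}) directly and check bracket compatibility on the generators $(df_0,f_0)$ via hamiltonian vector fields, whereas the paper obtains the induced structure from the bracket and anomaly identities (\ref{br-j}), (\ref{jac-jac}) on $\alpha$-basic functions and proves that $\alpha^{\ast}$ is a bracket homomorphism (its equation (\ref{alfa-homo})) before applying the anchor --- but these amount to the same computation.
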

\begin{proof}
1) Let $(f_0,g_0)$ be a pair of smooth functions on $\Gamma_0$ and $(\alpha^\ast f_0,\alpha^\ast g_0)$ the corresponding pair of left invariant functions on $\Gamma$. We have
\begin{equation*}
\{\alpha^\ast f_0,\alpha^\ast g_0\} = \Lambda_{\Gamma}(\alpha^\ast df_0, \alpha^\ast dg_0)+ \langle \alpha^\ast f_0 \alpha^\ast dg_0 - \alpha^\ast g_0 \alpha^\ast df_0, E_{\Gamma}\rangle.
\end{equation*}
Obviously, the function $\Lambda_{\Gamma}(\alpha^\ast df_0, \alpha^\ast dg_0)$ is left invariant. On the other hand, since $E_{\Gamma}$ is left invariant, it acts by differentiation on $C^\infty_L(\Gamma,\R)=\alpha^\ast C^\infty(\Gamma_0,\R)$ \cite{aw}. Thus, $\langle \alpha^\ast f_0 \alpha^\ast dg_0 - \alpha^\ast g_0 \alpha^\ast df_0, E_{\Gamma}\rangle$ is also a left invariant function on $\Gamma$. So, $\{\alpha^\ast f_0,\alpha^\ast g_0\}$ is left invariant. Consequently, there exists $h_0\in C^\infty(\Gamma_0,\R)$, $h_0=\varepsilon^\ast \{\alpha^\ast f_0,\alpha^\ast g_0\}$, such that $\{\alpha^\ast f_0,\alpha^\ast g_0\} = \alpha^\ast h_0$. The map $(f_0,g_0)\mapsto h_0$ endows $C^\infty(\Gamma_0,\R)$ with an internal composition law $\{\cdot,\cdot\}_0$ which is the bracket (\ref{br-j}) defined by a twisted Jacobi structure $(\Lambda_0,E_0,\omega_0)$ on $\Gamma_0$. Effectively, for any $u_0,v_0\in C^\infty(\Gamma_0)$,
\begin{eqnarray*}\label{eqF}
\{\alpha^\ast u_0, \alpha^\ast v_0\} &\stackrel{(\ref{br-j})}{=}& \Lambda_{\Gamma}(\alpha^\ast du_0,\alpha^\ast dv_0) + \langle \alpha^\ast u_0 \alpha^\ast dv_0 - \alpha^\ast v_0 \alpha^\ast du_0, E_{\Gamma}\rangle \Leftrightarrow \nonumber \\
\alpha^\ast \{u_0,v_0\}_0 & = & \alpha^\ast \big(\Lambda_0(du_0,dv_0)+\langle u_0 dv_0 - v_0du_0, E_0\rangle\big),
\end{eqnarray*}
where $\Lambda_0$ is the bivector field on $\Gamma_0$ associated to $\Lambda_0^\# = T\alpha \circ \Lambda^\#_{\Gamma}\circ \,^tT\alpha$ and $E_0 = \alpha_\ast(E_{\Gamma})$. Since $\alpha$ is a submersion surjective, the last equation means that $\{\cdot,\cdot\}_0$ is the bracket (\ref{br-j}) defined by $(\Lambda_0,E_0)$. Moreover, for any $u_0,v_0,w_0 \in C^\infty(\Gamma_0)$,
\begin{eqnarray}\label{eqE}
\lefteqn{\{\alpha^\ast u_0, \{\alpha^\ast v_0, \alpha^\ast w_0\}\} + c.p  \stackrel{(\ref{jac-jac})(\ref{equiv-tj})}{=}  \big(\Lambda^\#_{\Gamma}(d\omega)+\Lambda^\#_{\Gamma}(\omega)\wedge E_{\Gamma}\big)(\alpha^\ast du_0, \alpha^\ast dv_0, \alpha^\ast dw_0)} \nonumber \\
& & + \, \alpha^\ast u_0 \big((\Lambda^\#_{\Gamma}\otimes 1)(d\omega)(E_{\Gamma})-(\Lambda^\#_{\Gamma}\otimes 1)(\omega)(E_{\Gamma})\wedge E_{\Gamma}\big)(\alpha^\ast dv_0, \alpha^\ast dw_0) + c.p.
\end{eqnarray}
The left side of (\ref{eqE}) is equal to $\alpha^\ast\big(\{u_0,\{v_0,w_0\}_0\}_0 + c.p \big)$. However, taking into consideration that $\omega = \alpha^\ast \omega_0 - e^{-r}\beta^\ast \omega_0$, $\beta_\ast E_{\Gamma}=0$ and $\mathcal{L}_{E_{\Gamma}}r=0$, the right side of (\ref{eqE}) can be written as
\begin{eqnarray*}
\lefteqn{\alpha^\ast \big((\Lambda^\#_0(d\omega_0)+\Lambda^\#_0(\omega_0)\wedge E_0)(du_0,dv_0,dw_0)\big)}\\
& & +\, \big[\alpha^\ast u_0 \alpha^\ast\big(((\Lambda^\#_0\otimes 1)(d\omega_0)(E_0)-(\Lambda^\#_0\otimes 1)(\omega_0)(E_0)\wedge E_0)(dv_0,dw_0)\big) + c.p.\big].
\end{eqnarray*}
From the above results and the surjectivity of the submersion $\alpha$, we deduce that, for any $u_0,v_0,w_0 \in C^\infty(\Gamma_0,\R)$,
\begin{eqnarray*}
\lefteqn{\{u_0,\{v_0,w_0\}_0\}_0 + c.p = (\Lambda^\#_0(d\omega_0)+\Lambda^\#_0(\omega_0)\wedge E_0)(du_0,dv_0,dw_0)} \nonumber \\
& & +\, u_0\big(((\Lambda^\#_0\otimes 1)(d\omega_0)(E_0)-(\Lambda^\#_0\otimes 1)(\omega_0)(E_0)\wedge E_0)(dv_0,dw_0)\big) + c.p.,
\end{eqnarray*}
which means that $(\Lambda_0,E_0,\omega_0)$ defines a twisted Jacobi structure on $\Gamma_0$. By construction, $\alpha$ is a twisted Jacobi map.

Since $\alpha$ is a twisted Jacobi map and $\iota$ is a $-e^{-r}$-conformal twisted Jacobi map (see Proposition \ref{propert-propos}), their composition $\beta = \alpha \circ \iota$ is a $-e^{-r}$-conformal twisted Jacobi map.

\vspace{1mm}
\noindent
2) In order to show that $\mathfrak{I} : \Gamma(T^*\Gamma_0\times \R)\longrightarrow \Gamma(A(\Gamma))$ induces a Lie algebroid isomorphism, it suffices to prove that $\mathfrak{I}$ is compatible with the brackets and the anchors maps and that $\ker \mathfrak{I} = (0,0)$. We denote also by $\alpha^\ast$ the map from $\Gamma(T^\ast\Gamma_0\times \R)$ to $\Gamma(T^\ast \Gamma \times \R)$ given, for any $(\zeta_0,f_0)\in \Gamma(T^\ast\Gamma_0\times \R)$, by $\mathbf{\alpha^\ast}(\zeta_0,f_0) = (\alpha^\ast\zeta_0, \alpha^\ast f_0)$. Being $E_{\Gamma}=E^l$, we have $\mathfrak{I}= \pi \circ (\Lambda_{\Gamma},E_{\Gamma})^\#\circ \,^tT\alpha$. Since $\alpha$ is a twisted Jacobi map, $T\beta \circ \Lambda_{\Gamma}^\# \circ\,^tT\alpha = 0$ and $E_{\Gamma}$ is left invariant, it is easy to check that $\alpha^\ast$ is a Lie algebras homomorphism, i.e., for all $(\zeta_0,f_0), (\eta_0,g_0)\in \Gamma(T^\ast\Gamma_0\times \R)$,
\begin{equation}\label{alfa-homo}
\alpha^\ast(\{(\zeta_0,f_0), (\eta_0,g_0)\}^{\omega_0}) = \{\alpha^\ast(\zeta_0,f_0),\alpha^\ast(\eta_0,g_0)\}^\omega.
\end{equation}
Thus, by applying the anchor map $\pi \circ (\Lambda_{\Gamma},E_{\Gamma})^\#$ to (\ref{alfa-homo}), we obtain
\begin{eqnarray*}
\mathfrak{I}(\{(\zeta_0,f_0), (\eta_0,g_0)\}^{\omega_0})
& = & (\pi \circ (\Lambda_{\Gamma},E_{\Gamma})^\#)\{\alpha^\ast(\zeta_0,f_0),\alpha^\ast(\eta_0,g_0)\}^\omega \nonumber \\
& = & [(\pi \circ (\Lambda_{\Gamma},E_{\Gamma})^\#)(\alpha^\ast(\zeta_0,f_0)),(\pi \circ (\Lambda_{\Gamma},E_{\Gamma})^\#)(\alpha^\ast(\eta_0,g_0))] \nonumber \\
& = & [\mathfrak{I}((\zeta_0,f_0)),\mathfrak{I}((\eta_0,g_0))],
\end{eqnarray*}
which signifies that $\mathfrak{I}$ is compatible with the brackets on $\Gamma(T^*\Gamma_0\times \R)$ and $\Gamma(A(\Gamma))$. As well, we have $T\alpha \circ \mathfrak{I} =T\alpha \circ \pi \circ (\Lambda_{\Gamma},E_{\Gamma})^\#\circ \,^tT\alpha = \pi \circ T\alpha \circ (\Lambda_{\Gamma},E_{\Gamma})^\#\circ \,^tT\alpha =\pi\circ (\Lambda_0,E_0)^\#$, whence we get the compatibility of $\mathfrak{I}$ with the anchors of $T^*\Gamma_0\times \R$ and $A(\Gamma)$. Moreover, since $\ker \vartheta = \mathrm{Im}\Lambda_{\Gamma}^\#$ and $\ker(d\vartheta + \omega)=\langle E^l\rangle$ are complementary subbundles of $T\Gamma$ and $\alpha$ is a submersion surjective, $\mathfrak{I}(\zeta_0,f_0)=\Lambda_{\Gamma}^{\#}(\alpha^*\zeta_0)+\alpha^*f_0E^l=0$ if and only if $(\zeta_0,f_0)=(0,0)$. So, $\ker \mathfrak{I} = (0,0)$.
\end{proof}

\subsection{Homogeneous twisted symplectic and twisted contact groupoids}
We complete this section by investigating, in the groupoid framework, the relationship which links homogeneous twisted symplectic structures with twisted contact structures. For this reason, we recall the following concepts.

\vspace{2mm}

A \emph{multiplicative vector field on a Lie groupoid} $\Gamma \overset{\alpha}{\underset{\beta}{\rightrightarrows}}\Gamma_0$ \cite{mck} is a pair of vectors fields $(Z,Z_0)$, where $Z\in \Gamma(T\Gamma)$ and $Z_0\in \Gamma(T\Gamma_0)$, such that $Z: \Gamma \to T\Gamma$ is a morphism of Lie groupoids over $Z_0: \Gamma_0 \to T\Gamma_0$, when $T\Gamma \overset{T\alpha}{\underset{T\beta}{\rightrightarrows}}T\Gamma_0$ is endowed with the tangent Lie groupoid structure. The above condition is equivalent to the property that the flows $\Psi_t$ of $Z$ are (local) Lie groupoid automorphisms over the flows $\psi_t$ of $Z_0$.

\vspace{2mm}

A \emph{twisted symplectic groupoid} \cite{cx} is a Lie groupoid $\Gamma \overset{\alpha}{\underset{\beta}{\rightrightarrows}}\Gamma_0$ equipped with a nondegenerate multiplicative $2$-form $\Omega$ on $\Gamma$, i.e., $m^\ast\Omega = pr_1^\ast\Omega + pr_2^\ast\Omega$, where $pr_i : \Gamma_2 \to \Gamma$ is the projection on the $i$-factor, $i=1,2$, of $\Gamma_2$, and a closed $3$-form $\phi_0$ on $\Gamma_0$ such that $d\Omega = \alpha^*\phi_0 - \beta^*\phi_0$. If the closed $3$-form $\phi_0$ is exact, i.e., $\phi_0 = d\omega_0$ with $\omega_0 \in \Gamma(\bigwedge^2T^\ast \Gamma_0)$, we say that $(\Gamma \overset{\alpha}{\underset{\beta}{\rightrightarrows}}\Gamma_0,\Omega, d\omega_0)$ is an \emph{exact twisted symplectic groupoid}. Then, $\Omega$ has the particular type $\Omega = \alpha^*\omega_0 - \beta^*\omega_0 + d\eta$, where $\eta$ is an $1$-form on $\Gamma$, and it is multiplicative if and only if $d\eta$ is.

\vspace{2mm}

Taking into consideration the first footnote, we introduce the notion of a \emph{homogeneous twisted symplectic groupoid} as follows.
\begin{definition}
A \emph{homogeneous twisted symplectic groupoid} is an exact twisted symplectic groupoid $(\Gamma \overset{\alpha}{\underset{\beta}{\rightrightarrows}}\Gamma_0,\Omega, d\omega_0)$ endowed with a multiplicative vector field $(Z,Z_0)$ such that $\mathcal{L}_Z\Omega = \Omega$ and $i(Z_0)d\omega_0 = \omega_0$.
\end{definition}

Hence, we have
\begin{proposition}\label{prop-tcg-tsg}
If $(\Gamma \overset{\alpha}{\underset{\beta}{\rightrightarrows}}\Gamma_0, \vartheta,\omega,r)$, $\omega = \alpha^{\ast}\omega_0-e^{-r}\beta^{\ast}\omega_0$ with $\omega_0\in \Gamma(\bigwedge^2T^{\ast}\Gamma_0)$, is a $2n+1$-dimensional twisted contact groupoid, then its action groupoid $\tilde{\Gamma} \overset{\tilde{\alpha}}{\underset{\tilde{\beta}}{\rightrightarrows}}\tilde{\Gamma}_0$, which is defined by (\ref{act-grpd}), endowed with the structure $(\tilde{\Omega}, d\tilde{\omega}_0, (\frac{\partial}{\partial s},\frac{\partial}{\partial s}))$, where $\tilde{\Omega}=d(e^s\vartheta)+e^s\omega$, $\tilde{\omega}_0 = e^s\omega_0$ and $s$ is the canonical coordinate on $\R$, is a homogeneous twisted symplectic groupoid, and reciprocally.
\end{proposition}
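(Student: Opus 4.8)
The plan is to prove the two implications separately, both of which reduce to short computations once we use the explicit description of the action groupoid $\tilde{\Gamma}$ recalled above (valid since $r$ is multiplicative by Proposition \ref{propert-propos}): $\tilde{\Gamma}\cong\Gamma\times\R$ with $\tilde{\alpha}(g,s)=(\alpha(g),s)$, $\tilde{\beta}(g,s)=(\beta(g),s-r(g))$, $\tilde{\varepsilon}(x,s)=(\varepsilon(x),s)$, $\tilde{\iota}(g,s)=(\iota(g),s-r(g))$, $\tilde{m}((g_1,s_2-r(g_2)),(g_2,s_2))=(m(g_1,g_2),s_2)$, and projections $\widetilde{pr}_1,\widetilde{pr}_2:\tilde{\Gamma}_2\to\tilde{\Gamma}$. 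Throughout I abbreviate $Z=\frac{\partial}{\partial s}$ and note $\tilde{\Omega}=d(e^s\vartheta)+e^s\omega=e^s(ds\wedge\vartheta+d\vartheta+\omega)$, where $\vartheta$, $\omega$, $\omega_0$ carry no $ds$-term.

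Assume first that $(\Gamma\rightrightarrows\Gamma_0,\vartheta,\omega,r)$ is a twisted contact groupoid. I would begin by showing that $(\tilde{\Gamma}\rightrightarrows\tilde{\Gamma}_0,\tilde{\Omega},d\tilde{\omega}_0)$ is an exact twisted symplectic groupoid. The crucial point is that $e^s\vartheta$ is multiplicative on $\tilde{\Gamma}$: evaluating $\tilde{m}^{\ast}(e^s\vartheta)$, $\widetilde{pr}_1^{\ast}(e^s\vartheta)$ and $\widetilde{pr}_2^{\ast}(e^s\vartheta)$ at a composable pair $((g_1,s_2-r(g_2)),(g_2,s_2))$ --- using that $s$ pulls back to $s_2$ under $\tilde{m}$ and to $s_2-r(g_2)$ under $\widetilde{pr}_1$, together with the multiplicativity condition (\ref{mult-cond}) for $\vartheta$ --- one obtains $\tilde{m}^{\ast}(e^s\vartheta)=\widetilde{pr}_1^{\ast}(e^s\vartheta)+\widetilde{pr}_2^{\ast}(e^s\vartheta)$, hence $d(e^s\vartheta)$ is multiplicative as well. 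On the other hand, from $\omega=\alpha^{\ast}\omega_0-e^{-r}\beta^{\ast}\omega_0$, the multiplicativity of $r$, and the formulas for $\tilde{\alpha},\tilde{\beta}$, one reads off that $e^s\omega=\tilde{\alpha}^{\ast}\tilde{\omega}_0-\tilde{\beta}^{\ast}\tilde{\omega}_0$, which is multiplicative because every $2$-form $\tilde{\alpha}^{\ast}\mu-\tilde{\beta}^{\ast}\mu$ is. Therefore $\tilde{\Omega}=d(e^s\vartheta)+\tilde{\alpha}^{\ast}\tilde{\omega}_0-\tilde{\beta}^{\ast}\tilde{\omega}_0$ is multiplicative, with potential $\tilde{\eta}=e^s\vartheta$, and $d\tilde{\Omega}=\tilde{\alpha}^{\ast}d\tilde{\omega}_0-\tilde{\beta}^{\ast}d\tilde{\omega}_0$. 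Nondegeneracy of $\tilde{\Omega}$ is the content of the Corollary stated after Proposition \ref{poissonization} --- its inverse is the poissonization $\tilde{\Lambda}=e^{-s}(\Lambda_{\Gamma}+\frac{\partial}{\partial s}\wedge E_{\Gamma})$ --- or, directly, $\tilde{\Omega}^{\,n+1}=(n+1)e^{(n+1)s}\,ds\wedge\vartheta\wedge(d\vartheta+\omega)^n$ is nowhere zero since $\vartheta\wedge(d\vartheta+\omega)^n$ never vanishes on $\Gamma$.

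It then remains to check homogeneity. The flow of $Z$ on $\tilde{\Gamma}$ is $\Psi_t(g,s)=(g,s+t)$, and from the explicit structural maps of $\tilde{\Gamma}$ one verifies directly that each $\Psi_t$ is a Lie groupoid automorphism over the flow $(x,s)\mapsto(x,s+t)$ of $Z$ on $\tilde{\Gamma}_0$; hence $(Z,Z)$ is a multiplicative vector field. Since $ds\wedge\vartheta+d\vartheta+\omega$ and $\omega_0$ are $Z$-invariant and $i(Z)\omega_0=0$, we get $\mathcal{L}_Z\tilde{\Omega}=\mathcal{L}_Z\big(e^s(ds\wedge\vartheta+d\vartheta+\omega)\big)=\tilde{\Omega}$ and $i(Z)d\tilde{\omega}_0=i(Z)\big(e^s\,ds\wedge\omega_0+e^s d\omega_0\big)=e^s\omega_0=\tilde{\omega}_0$, so $(\tilde{\Gamma}\rightrightarrows\tilde{\Gamma}_0,\tilde{\Omega},d\tilde{\omega}_0,(Z,Z))$ is a homogeneous twisted symplectic groupoid. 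For the converse, suppose such a structure is given on the action groupoid $\tilde{\Gamma}=\Gamma\times\R\rightrightarrows\Gamma_0\times\R$ of a groupoid $\Gamma\rightrightarrows\Gamma_0$ with multiplicative $r$. From $i(Z)d\tilde{\omega}_0=\tilde{\omega}_0$ (equivalently $\mathcal{L}_Z\tilde{\omega}_0=\tilde{\omega}_0$ and $i(Z)\tilde{\omega}_0=0$) we get $\tilde{\omega}_0=e^s\omega_0$ for some $2$-form $\omega_0$ on $\Gamma_0$; from $\mathcal{L}_Z\tilde{\Omega}=\tilde{\Omega}$, the $2$-form $e^{-s}\tilde{\Omega}$ is $Z$-invariant, so $e^{-s}\tilde{\Omega}=ds\wedge\vartheta+b$ with $\vartheta:=i(Z)(e^{-s}\tilde{\Omega})$ and $b$ invariant forms pulled back from $\Gamma$; imposing $d\tilde{\Omega}=\tilde{\alpha}^{\ast}d\tilde{\omega}_0-\tilde{\beta}^{\ast}d\tilde{\omega}_0$ and comparing $ds$-components forces $b=d\vartheta+\omega$ with $\omega=\alpha^{\ast}\omega_0-e^{-r}\beta^{\ast}\omega_0$, i.e. $\tilde{\Omega}=d(e^s\vartheta)+e^s\omega$. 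Nondegeneracy of $\tilde{\Omega}$ then gives $\vartheta\wedge(d\vartheta+\omega)^n\neq 0$ on $\Gamma$, and since $e^s\vartheta=i(Z)\tilde{\Omega}$ is the contraction of the multiplicative form $\tilde{\Omega}$ by the multiplicative vector field $Z$, it is multiplicative, so unwinding $\tilde{m}^{\ast}(e^s\vartheta)=\widetilde{pr}_1^{\ast}(e^s\vartheta)+\widetilde{pr}_2^{\ast}(e^s\vartheta)$ at $((g_1,s_2-r(g_2)),(g_2,s_2))$ yields (\ref{mult-cond}). Thus $(\Gamma\rightrightarrows\Gamma_0,\vartheta,\omega,r)$ is a twisted contact groupoid whose associated homogeneous twisted symplectic groupoid, by the first part, is the one we started from.

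I expect the main obstacle to lie in the converse: recognising that $e^s\vartheta$ coincides with $i(Z)\tilde{\Omega}$ (so that its multiplicativity is inherited from that of $\tilde{\Omega}$ and $Z$, rather than having to be proved by hand), and checking that separating the $ds$-component of $d\tilde{\Omega}=\tilde{\alpha}^{\ast}d\tilde{\omega}_0-\tilde{\beta}^{\ast}d\tilde{\omega}_0$ genuinely pins down the type $\omega=\alpha^{\ast}\omega_0-e^{-r}\beta^{\ast}\omega_0$. In the forward direction the only genuinely computational point is the multiplicativity of $e^s\vartheta$, which drops out of (\ref{mult-cond}) and the explicit formula for $\tilde{m}$; the remaining verifications --- nondegeneracy, $(Z,Z)$ multiplicative, and the two Lie-derivative identities --- are routine.
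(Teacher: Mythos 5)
Your proof is correct, and it covers both implications. In the forward direction it is essentially the paper's computation, just reorganized: the paper expands $\tilde{m}^{\ast}\tilde{\Omega}$ and $\tilde{pr}_1^{\ast}\tilde{\Omega}+\tilde{pr}_2^{\ast}\tilde{\Omega}$ at a composable pair $((g_1,s_2-r(g_2)),(g_2,s_2))$ and matches the $ds$-components (this is exactly (\ref{mult-cond})) and the $\omega$-components (this is the multiplicativity of $r$), whereas you split $\tilde{\Omega}=d(e^s\vartheta)+(\tilde{\alpha}^{\ast}\tilde{\omega}_0-\tilde{\beta}^{\ast}\tilde{\omega}_0)$ and check multiplicativity of each summand separately; the nondegeneracy and homogeneity verifications coincide, and your identity $\tilde{\Omega}^{n+1}=(n+1)e^{(n+1)s}\,ds\wedge\vartheta\wedge(d\vartheta+\omega)^n$ makes the rank statement explicit (the paper's exponent $2n+2$ there is a slip). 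The genuine difference is in the converse: the paper gets \emph{reciprocally} for free because every step of its computation is phrased as an equivalence, while you give a standalone reconstruction --- using $i(Z)d\tilde{\omega}_0=\tilde{\omega}_0\Leftrightarrow(\mathcal{L}_Z\tilde{\omega}_0=\tilde{\omega}_0,\ i(Z)\tilde{\omega}_0=0)$ and $\mathcal{L}_Z\tilde{\Omega}=\tilde{\Omega}$ to write $\tilde{\Omega}=e^s(ds\wedge\vartheta+b)$ with $\vartheta,b$ basic, pinning down $b=d\vartheta+\omega$ with $\omega=\alpha^{\ast}\omega_0-e^{-r}\beta^{\ast}\omega_0$ from the $ds$-component of $d\tilde{\Omega}=\tilde{\alpha}^{\ast}d\tilde{\omega}_0-\tilde{\beta}^{\ast}d\tilde{\omega}_0$, and extracting (\ref{mult-cond}) from the multiplicativity of $i(Z)\tilde{\Omega}=e^s\vartheta$ (contraction of a multiplicative form with a multiplicative vector field is multiplicative, since $(Z,Z)$ is $\tilde{m}$- and $\tilde{pr}_i$-related to itself). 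This buys you something slightly stronger than the stated reciprocal --- the twisted contact data is recovered from an arbitrary homogeneous twisted symplectic structure on the action groupoid, not assumed to be in the given form --- at the cost of the extra structural lemmas; the paper's route is shorter but yields only the stated equivalence. All the individual steps you invoke (multiplicativity of $\tilde{\alpha}^{\ast}\mu-\tilde{\beta}^{\ast}\mu$, the flow description of the multiplicative field $(\frac{\partial}{\partial s},\frac{\partial}{\partial s})$, basicness of the $Z$-invariant forms) check out.
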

\begin{proof}
By construction, $\dim \tilde{\Gamma} = 2n+2$. So, $\tilde{\Omega}$ is of maximal rank on $\tilde{\Gamma}$, i.e., $\tilde{\Omega}^{2n+2}\neq 0$, if and only if $\vartheta \wedge (d\vartheta + \omega)^n \neq 0$, i.e., $(\vartheta,\omega)$ is a twisted contact structure on $\Gamma$. Furthermore, $d\tilde{\Omega} = d(e^s\omega) = d(e^s(\alpha^{\ast}\omega_0-e^{-r}\beta^{\ast}\omega_0))=d(e^s\alpha^{\ast}\omega_0)-d(e^{s-r}\beta^{\ast}\omega_0) = \tilde{\alpha}^\ast(d\tilde{\omega}_0)-\tilde{\beta}^\ast(d\tilde{\omega}_0)$ and the multiplicativity of $\tilde{\Omega}$ is equivalent with that of $(\vartheta,r)$. Effectively, let $\tilde{pr}_i$ be the projection on the $i$-factor, $i=1,2$, of $\tilde{\Gamma}_2$ and $((g_1, s_2 - r(g_2)),(g_2,s_2))$ an element of $\tilde{\Gamma}_2$. Since $\tilde{\Omega}=e^sds\wedge \vartheta + e^sd\vartheta + e^s\omega$, we have
\begin{equation}\label{eq1-prop}
\tilde{m}^\ast \tilde{\Omega}_{((g_1, s_2 - r(g_2)),(g_2,s_2))} = \tilde{\Omega}_{(m(g_1,g_2),s_2)} = e^{s_2}[ds\wedge \vartheta_{m(g_1,g_2)} + d\vartheta_{m(g_1,g_2)} + \omega_{m(g_1,g_2)}]
\end{equation}
and
\begin{eqnarray}\label{eq2-prop}
(\tilde{pr}_1^\ast\tilde{\Omega} + \tilde{pr}_2^\ast\tilde{\Omega})_{((g_1, s_2 - r(g_2)),(g_2,s_2))} & = & \tilde{\Omega}_{(g_1, s_2 - r(g_2)} + \tilde{\Omega}_{(g_2, s_2)} \nonumber \\
& = & e^{s_2-r(g_2)}d(s-r)\wedge \vartheta_{g_1} + e^{s_2-r(g_2)}(d\vartheta_{g_1} + \omega_{g_1}) \nonumber \\
& & +\,e^{s_2}ds\wedge \vartheta_{g_2} + e^{s_2}d\vartheta_{g_2} + e^{s_2}\omega_{g_2}.
\end{eqnarray}
Hence, $\tilde{m}^\ast \tilde{\Omega}_{((g_1, s_2 - r(g_2)),(g_2,s_2))} = (\tilde{pr}_1^\ast\tilde{\Omega} + \tilde{pr}_2^\ast\tilde{\Omega})_{((g_1, s_2 - r(g_2)),(g_2,s_2))}$ if and only if the components of (\ref{eq1-prop}) and (\ref{eq2-prop}) containing $ds$ are equals and as also ones containing $\omega$. The first equality gives us $\vartheta_{m(g_1,g_2)} = e^{-r(g_2)}\vartheta_{g_1}+\vartheta_{g_2}$, whence we obtain (\ref{mult-cond}). From the second equality, taking into account that $\alpha(g_1 \cdot g_2)=\alpha(g_2)$, $\beta(g_1 \cdot g_2)=\beta(g_1)$ and $\alpha(g_1)=\beta(g_2)$, we get $\omega_{m(g_1,g_2)} = e^{-r(g_2)}\omega_{g_1} + \omega_{g_2} \Leftrightarrow r(g_1\cdot g_2) = r(g_1) + r(g_2)$.

Finally, it is enough to show that $(\frac{\partial}{\partial s},\frac{\partial}{\partial s})$ is a multiplicative vector field on $\tilde{\Gamma} \overset{\tilde{\alpha}}{\underset{\tilde{\beta}}{\rightrightarrows}}\tilde{\Gamma}_0$ and $(\tilde{\Omega}, \tilde{\omega}_0)$ is homogeneous with respect to $(\frac{\partial}{\partial s},\frac{\partial}{\partial s})$. By considering the identifications $T\tilde{\Gamma}\equiv T\Gamma \times T\R$ and $T\tilde{\Gamma}_0\equiv T\Gamma_0 \times T\R$, we can easily check that $T\tilde{\alpha} \circ \frac{\partial}{\partial s} = \frac{\partial}{\partial s} \circ \tilde{\alpha}$, $T\tilde{\beta} \circ \frac{\partial}{\partial s} = \frac{\partial}{\partial s} \circ \tilde{\beta}$ and $T\tilde{m}(\frac{\partial}{\partial s},\frac{\partial}{\partial s})=\frac{\partial}{\partial s} \circ \tilde{m} $. Hence, $(\frac{\partial}{\partial s},\frac{\partial}{\partial s})$ is a multiplicative vector field on $\tilde{\Gamma}$. The homogeneity of $(\tilde{\Omega}, \tilde{\omega}_0)$ with respect to $(\frac{\partial}{\partial s},\frac{\partial}{\partial s})$ is obvious.
\end{proof}

\vspace{2mm}

Moreover, by adapting Theorem 2.6 of A. Cattaneo and P. Xu \cite{cx} in the homogeneous case, we obtain
\begin{theorem}\label{th-ind-hpois}
Let $(\Gamma \overset{\alpha}{\underset{\beta}{\rightrightarrows}}\Gamma_0, \Omega, d\omega_0,(Z,Z_0))$ be a homogeneous twisted symplectic grou\-poid, $(A(\Gamma),[\cdot , \cdot], T\alpha)$ its associated Lie algebroid, and $(\Lambda, d\omega,Z)$, $\omega =\alpha^\ast\omega_0 - \beta^\ast \omega_0$, the homogeneous twisted Poisson structure on $\Gamma$ defined by $(\Omega,d\omega)$. Then
\begin{enumerate}
\item[1)]
$(\Lambda, d\omega,Z)$ induces on $\Gamma_0$ a homogeneous twisted Poisson structure $(\Lambda_0,d\omega_0,Z_0)$ such that $\alpha : \Gamma\to \Gamma_0$ is a twisted Poisson map and $\beta: \Gamma \to \Gamma_0$ is a twisted anti-Poisson map.
\item[2)]
If $(T^*\Gamma_0,\lcf\cdot,\cdot\rcf^{d\omega_0}, \Lambda_0^\#)$ is the Lie algebroid over $\Gamma_0$ determined by $(\Lambda_0,d\omega_0)$, i.e., for any $\zeta_0,\eta_0 \in \Gamma(T^\ast \Gamma_0)$,
\begin{equation*}
\lcf \zeta_0,\eta_0 \rcf^{d\omega_0} = \mathcal{L}_{\Lambda_0^\#(\zeta_0)}\eta_0 - \mathcal{L}_{\Lambda_0^\#(\eta_0)}\zeta_0 - d\Lambda_0(\zeta_0,\eta_0) + d\omega_0 (\Lambda_0^\#(\zeta_0),\Lambda_0^\#(\eta_0),\cdot),
\end{equation*}
the map $\mathfrak{J} : \Gamma(T^*\Gamma_0)\longrightarrow \Gamma(A(\Gamma))$, $\zeta_0 \mapsto \Lambda^{\#}(\alpha^*\zeta_0)$\footnote{Using the same argumentation as in the case of symplectic groupoids \cite{cdw}, we can show that the left invariant vectors fields on a twisted symplectic groupoid $(\Gamma \overset{\alpha}{\underset{\beta}{\rightrightarrows}}\Gamma_0, \Omega, \phi_0)$ is of type $\Lambda^{\#}(\alpha^*\zeta_0)$, where $\zeta_0 \in \Gamma(T^\ast \Gamma_0)$.}, induces a Lie algebroid isomorphism between $(T^*\Gamma_0,\lcf\cdot,\cdot\rcf^{d\omega_0}, \Lambda_0^\#)$ and $(A(\Gamma),[\cdot , \cdot], T\alpha)$.
\end{enumerate}
\end{theorem}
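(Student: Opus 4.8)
The plan is to reduce the statement to the non-homogeneous version proved by A. Cattaneo and P. Xu in \cite{cx} (their Theorem 2.6) and then to check that the homothety data descend through $\alpha$. First I would make sure that $(\Omega, d\omega)$, with $\omega =\alpha^\ast\omega_0-\beta^\ast\omega_0$, really defines a homogeneous twisted Poisson structure $(\Lambda,d\omega,Z)$ on $\Gamma$. Since $\Omega$ is nondegenerate, $\Lambda$ is its inverse bivector field; from $d\Omega=\alpha^\ast d\omega_0-\beta^\ast d\omega_0=d\omega$ and the \v{S}evera--Weinstein/Roytenberg correspondence one gets $\tfrac12[\Lambda,\Lambda]=\Lambda^\#(d\omega)$. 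Differentiating $\Lambda^\#\circ\Omega^\flat=\pm\mathrm{Id}$ along $Z$ and using $\mathcal{L}_Z\Omega=\Omega$ gives $\mathcal{L}_Z\Lambda=-\Lambda$. Finally, because $(Z,Z_0)$ is multiplicative, $Z$ is both $\alpha$- and $\beta$-related to $Z_0$, i.e. $T\alpha\circ Z=Z_0\circ\alpha$ and $T\beta\circ Z=Z_0\circ\beta$, so the hypothesis $i(Z_0)d\omega_0=\omega_0$ yields $i(Z)d\omega=i(Z)(\alpha^\ast d\omega_0-\beta^\ast d\omega_0)=\alpha^\ast\omega_0-\beta^\ast\omega_0=\omega$. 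Hence $(\Gamma,\Lambda,d\omega,Z)$ is homogeneous twisted Poisson, as asserted.

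For part 1), Theorem 2.6 of \cite{cx} applied to the twisted symplectic groupoid $(\Gamma,\Omega,d\omega_0)$ (equivalently to the twisted Poisson structure $(\Lambda,d\omega)$ on it) produces a unique twisted Poisson structure $(\Lambda_0,d\omega_0)$ on $\Gamma_0$, characterized by $\Lambda_0^\#=T\alpha\circ\Lambda^\#\circ{}^tT\alpha$, for which $\alpha$ is a twisted Poisson map and $\beta$ a twisted anti-Poisson map; this already gives everything in the claim except the homogeneity. It then remains to prove $\mathcal{L}_{Z_0}\Lambda_0=-\Lambda_0$, after which $(\Lambda_0,d\omega_0,Z_0)$ will be homogeneous by (\ref{def-homogeneous}), since $\omega_0=i(Z_0)d\omega_0$ is a hypothesis. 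To this end, take $f_0,g_0\in C^\infty(\Gamma_0,\R)$ and set $f=\alpha^\ast f_0$, $g=\alpha^\ast g_0$. Because $\alpha$ is twisted Poisson, $\{f,g\}=\alpha^\ast\{f_0,g_0\}_0$, and because $T\alpha\circ Z=Z_0\circ\alpha$, $Z(\alpha^\ast h_0)=\alpha^\ast(Z_0h_0)$ for every $h_0$. Writing $\mathcal{L}_Z\Lambda=-\Lambda$ as a derivation identity on the bracket,
\[
Z\{f,g\}-\{Zf,g\}-\{f,Zg\}=-\{f,g\},
\]
and pulling this back through the surjective submersion $\alpha$ using the two relations above, I get $Z_0\{f_0,g_0\}_0-\{Z_0f_0,g_0\}_0-\{f_0,Z_0g_0\}_0=-\{f_0,g_0\}_0$, i.e. $\mathcal{L}_{Z_0}\Lambda_0=-\Lambda_0$. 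Thus $(\Lambda_0,d\omega_0,Z_0)$ is a homogeneous twisted Poisson structure on $\Gamma_0$.

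For part 2), I would invoke again Theorem 2.6 of \cite{cx}: the map $\mathfrak{J}(\zeta_0)=\Lambda^\#(\alpha^\ast\zeta_0)$ takes values in the left invariant vector fields on $\Gamma$ (footnote), and $\alpha^\ast:\Gamma(T^\ast\Gamma_0)\to\Gamma(T^\ast\Gamma)$ is a morphism of the cotangent-type Lie algebroids $(T^\ast\Gamma_0,\lcf\cdot,\cdot\rcf^{d\omega_0},\Lambda_0^\#)$ and $(T^\ast\Gamma,\lcf\cdot,\cdot\rcf^{d\omega},\Lambda^\#)$ precisely because $\alpha$ is a twisted Poisson map; applying the anchor $\Lambda^\#$ transports this compatibility to $A(\Gamma)$, and compatibility with the anchors follows from $T\alpha\circ\Lambda^\#\circ{}^tT\alpha=\Lambda_0^\#$. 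Finally $\ker\mathfrak{J}=0$ since $\Lambda^\#$ is an isomorphism and $\alpha$ is a surjective submersion. This is the homogeneity-free specialization of the argument used for Theorem \ref{th-ind-jac}(2), with the contributions $(E^l,\alpha^\ast f_0)$ suppressed.

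\emph{Main obstacle.} There is very little genuinely new here: the only fresh input is that homogeneity passes to the quotient, which is the short pull-back computation in the second paragraph. The rest is a transcription of \cite{cx}, so the only real care needed is to match the sign/normalization conventions relating a twisted symplectic form to its inverse twisted Poisson structure (consistent with the corollary following Proposition \ref{poissonization}) and to record that it is the multiplicativity of $(Z,Z_0)$ that supplies the $\alpha$- and $\beta$-relatedness invoked throughout.
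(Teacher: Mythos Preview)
Your proposal is correct and follows essentially the same route as the paper: both reduce parts 1) and 2) to Cattaneo--Xu's Theorem 2.6 in \cite{cx} and then verify only the extra homogeneity condition $\mathcal{L}_{Z_0}\Lambda_0=-\Lambda_0$, using that $\mathcal{L}_Z\Lambda=-\Lambda$ (from $\mathcal{L}_Z\Omega=\Omega$) together with the $\alpha$-relatedness of $Z$ and $Z_0$ coming from multiplicativity. The only cosmetic difference is that you check the descent of homogeneity via the bracket identity on $\alpha^\ast$-pullbacks of functions, whereas the paper writes the same step as a direct pushforward, $\mathcal{L}_{Z_0}\Lambda_0=\mathcal{L}_{\alpha_\ast Z}(\alpha_\ast\Lambda)=\alpha_\ast(\mathcal{L}_Z\Lambda)=-\Lambda_0$.
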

\begin{proof}
Taking into account the results of \cite{cx} and our data $i(Z_0)d\omega_0 = \omega_0$, we have only to check the homogeneity of $\Lambda_0$ with respect to $Z_0$, where $\Lambda_0 =\alpha_\ast \Lambda = -\beta_\ast \Lambda$. Since $\Lambda$ is the inverse of $\Omega$ and $\mathcal{L}_Z\Omega = \Omega$, we get $\mathcal{L}_Z\Lambda = - \Lambda$. Also, from the multiplicativity of $(Z,Z_0)$ on $\Gamma$, we have $T\alpha \circ Z = Z_0\circ \alpha$ and $T\beta \circ Z = Z_0\circ \beta$. So, because $\alpha$ is a submersion surjective, $\mathcal{L}_{Z_0}\Lambda_0 = \mathcal{L}_{\alpha_\ast Z}(\alpha_\ast \Lambda) = \alpha_\ast(\mathcal{L}_Z\Lambda)=\alpha_\ast(-\Lambda)=-\Lambda_0$.
\end{proof}

\vspace{2mm}

Using Proposition \ref{prop-tcg-tsg} and Theorem \ref{th-ind-hpois}, we can formulate the following
\begin{proposition}\label{prop-grpd-basemanif}
Let $(\Gamma \overset{\alpha}{\underset{\beta}{\rightrightarrows}}\Gamma_0, \vartheta,\omega,r)$ be a twisted contact groupoid, $(\Lambda_0,E_0,\omega_0)$ its induced twisted Jacobi structure on $\Gamma_0$ (Theorem \ref{th-ind-jac}) and  $(\tilde{\Gamma} \overset{\tilde{\alpha}}{\underset{\tilde{\beta}}{\rightrightarrows}}\tilde{\Gamma}_0,\tilde{\Omega}, d\tilde{\omega}_0, (\frac{\partial}{\partial s},\frac{\partial}{\partial s}))$ its associated homogeneous twisted symplectic groupoid whose the induced homogeneous twisted Poisson structure on $\tilde{\Gamma}_0$ (Theorem \ref{th-ind-hpois}) is $(\tilde{\Lambda}_0, d\tilde{\omega}_0,\frac{\partial}{\partial s})$. Then, the twisted Poissonization of $(\Lambda_0,E_0,\omega_0)$ coincides with $(\tilde{\Lambda}_0, d\tilde{\omega}_0,\frac{\partial}{\partial s})$.
\end{proposition}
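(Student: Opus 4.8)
The plan is to realise the statement as a formal diagram chase linking Propositions \ref{poissonization} and \ref{prop-tcg-tsg} with Theorems \ref{th-ind-jac} and \ref{th-ind-hpois}, the crux being the behaviour of the twisted Poissonization bivector under the source map. First I would identify the two descriptions of the homogeneous twisted Poisson structure carried by $\tilde{\Gamma}=\Gamma\times\R$. On the one hand, $(\Lambda_{\Gamma},E_{\Gamma},\omega)$ being the twisted Jacobi structure on $\Gamma$ defined by the $\omega$-twisted contact form $\vartheta$, Proposition \ref{poissonization} and its Corollary give its twisted Poissonization as the homogeneous twisted Poisson structure $(\tilde{\Lambda},d(e^{s}\omega),\frac{\partial}{\partial s})$ on $\tilde{\Gamma}$, where $\tilde{\Lambda}=e^{-s}(\Lambda_{\Gamma}+\frac{\partial}{\partial s}\wedge E_{\Gamma})$ is the inverse of $\tilde{\Omega}=d(e^{s}\vartheta)+e^{s}\omega$. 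On the other hand, since $\tilde{\beta}(g,s)=(\beta(g),s-r(g))$ and $\omega=\alpha^{\ast}\omega_0-e^{-r}\beta^{\ast}\omega_0$, one has $e^{s}\omega=\tilde{\alpha}^{\ast}\tilde{\omega}_0-\tilde{\beta}^{\ast}\tilde{\omega}_0$ with $\tilde{\omega}_0=e^{s}\omega_0$; hence, by Theorem \ref{th-ind-hpois} applied to the homogeneous twisted symplectic groupoid $(\tilde{\Gamma}\overset{\tilde{\alpha}}{\underset{\tilde{\beta}}{\rightrightarrows}}\tilde{\Gamma}_0,\tilde{\Omega},d\tilde{\omega}_0,(\frac{\partial}{\partial s},\frac{\partial}{\partial s}))$ of Proposition \ref{prop-tcg-tsg}, this very structure is the one it attaches to that groupoid, with $\tilde{\Lambda}$ its inverse bivector.

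Next I would push $\tilde{\Lambda}$ forward along the source map $\tilde{\alpha}:\tilde{\Gamma}\to\tilde{\Gamma}_0$, $(g,s)\mapsto(\alpha(g),s)$, in two ways. Directly: $\tilde{\alpha}$ is a product map that is the identity on the $\R$-factor, so $e^{-s}$ is constant along its fibres and, using $\Lambda_0=\alpha_{\ast}\Lambda_{\Gamma}$ and $E_0=\alpha_{\ast}E_{\Gamma}$ from part (1) of Theorem \ref{th-ind-jac},
\begin{equation*}
\tilde{\alpha}_{\ast}\tilde{\Lambda}=\tilde{\alpha}_{\ast}\Big(e^{-s}\big(\Lambda_{\Gamma}+\tfrac{\partial}{\partial s}\wedge E_{\Gamma}\big)\Big)=e^{-s}\big(\Lambda_0+\tfrac{\partial}{\partial s}\wedge E_0\big),
\end{equation*}
which by Proposition \ref{poissonization} is exactly the bivector field of the twisted Poissonization of $(\Lambda_0,E_0,\omega_0)$. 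By contrast, part (1) of Theorem \ref{th-ind-hpois} says that $\tilde{\alpha}$ is a twisted Poisson map, so $\tilde{\alpha}_{\ast}\tilde{\Lambda}=\tilde{\Lambda}_0$. Comparing the two computations yields $\tilde{\Lambda}_0=e^{-s}(\Lambda_0+\frac{\partial}{\partial s}\wedge E_0)$.

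It then remains to match the remaining data. The twisted Poissonization of $(\Lambda_0,E_0,\omega_0)$ carries the $2$-form $e^{s}\omega_0=\tilde{\omega}_0$, hence the closed $3$-form $d\tilde{\omega}_0$, which is precisely the one in the exact twisted Poisson structure $(\tilde{\Lambda}_0,d\tilde{\omega}_0)$ that Theorem \ref{th-ind-hpois} induces on $\tilde{\Gamma}_0$; and both homothety vector fields are $\frac{\partial}{\partial s}$. Thus all three ingredients agree and the twisted Poissonization of $(\Lambda_0,E_0,\omega_0)$ coincides with $(\tilde{\Lambda}_0,d\tilde{\omega}_0,\frac{\partial}{\partial s})$.

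I expect the one place needing genuine care to be the identification in the first paragraph: verifying that $e^{s}\omega$ really has the form $\tilde{\alpha}^{\ast}\tilde{\omega}_0-\tilde{\beta}^{\ast}\tilde{\omega}_0$ and that the inverse of $\tilde{\Omega}$ is $e^{-s}(\Lambda_{\Gamma}+\frac{\partial}{\partial s}\wedge E_{\Gamma})$ (the Corollary to Proposition \ref{poissonization}), together with the bookkeeping of the identifications $T\tilde{\Gamma}\equiv T\Gamma\times T\R$ and $T\tilde{\Gamma}_0\equiv T\Gamma_0\times T\R$ that make the pushforward in the second paragraph legitimate. Everything else is formal.
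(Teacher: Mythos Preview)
Your argument is correct and is exactly the diagram chase the paper has in mind: the proposition is stated in the paper without proof, merely as a consequence of Proposition~\ref{prop-tcg-tsg} and Theorem~\ref{th-ind-hpois}, and your write-up supplies precisely those details (identifying the Poissonization bivector on $\tilde{\Gamma}$ with the inverse of $\tilde{\Omega}$ via the Corollary to Proposition~\ref{poissonization}, then pushing forward by $\tilde{\alpha}$ and invoking Theorems~\ref{th-ind-jac} and~\ref{th-ind-hpois}).
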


\subsection{Examples of twisted contact and homogeneous twisted symplectic groupoids}
\textbf{1. A twisted contact groupoid constructed by a twisted contact manifold.} Let $(\Gamma_0, \vartheta_0, \omega_0)$ be a $2n+1$-dimensional, simply connected, twisted contact manifold and $(\Lambda_0,E_0)$ the corresponding twisted Jacobi structure on $\Gamma_0$ characterized by (\ref{reeb}) and (\ref{bivect-reeb}). We consider the pair groupoid $\Gamma_0 \times \Gamma_0 \overset{p_2}{\underset{p_1}{\rightrightarrows}} \Gamma_0$ of $\Gamma_0$, $p_i$, $i=1,2$, being the projection on the $i$-factor, and the vector bundle groupoid $\R \overset{\pi}{\underset{\pi}{\rightrightarrows}} \{0\}$. By identifying $\Gamma_0 \times \{0\}$ with $\Gamma_0$, we construct the product groupoid $\Gamma \overset{\alpha}{\underset{\beta}{\rightrightarrows}} \Gamma_0$ of $\Gamma_0\times \Gamma_0$ with $\R$, i.e., $\Gamma = \Gamma_0 \times \Gamma_0 \times \R$. The structure maps $\alpha$ and $\beta$ of $\Gamma$ are, respectively, the projections on the second and first factor of $\Gamma$, that are $\alpha (x,y,t) = y$ and $\beta (x,y,t) = x$. The multiplication map $m$ on $\Gamma_2= \{((x,y,t),(y,z,s)) \, /\,x,y,z \in \Gamma_0 \;\; \mathrm{and}\;\; t,s \in \R \}\subset \Gamma \times \Gamma$ is given by $m((x,y,t),(y,z,s)) = (x,z, t+s)$, the inversion map $\iota : \Gamma \to \Gamma$ by $\iota (x,y,t)=(y,x,-t)$ and the embedding $\varepsilon : \Gamma_0\hookrightarrow \Gamma$ by $\varepsilon(x) = (x,x,0)$. The triple $(\vartheta, \omega, r)$, where
\begin{equation*}
\vartheta = \alpha^\ast \vartheta_0 - e^{-r}\beta^\ast \vartheta_0, \quad \quad \omega = \alpha^\ast \omega_0 - e^{-r}\beta^\ast \omega_0 \quad \quad \mathrm{and} \quad \quad r = p_3,
\end{equation*}
$p_3$ being the projection $\Gamma \to \R$, defines a $r$-multiplicative twisted contact structure on $\Gamma$ whose the Reeb vector field is $E_{\Gamma} = 0 + E_0 + 0$ and its bivector field (\ref{bivect-reeb}) is $\Lambda_{\Gamma} = -e^r\Lambda_0 + \Lambda_0 + 0$. It is easy to verify that
\begin{equation}\label{eq-example}
\vartheta \wedge (d\vartheta +\omega)^{2n+1} = ce^{-(n+1)r} \alpha^\ast(\vartheta_0\wedge (d\vartheta_0 + \omega_0)^n)\wedge \beta^\ast(\vartheta_0\wedge (d\vartheta_0 + \omega_0)^n)\wedge dr \neq 0
\end{equation}
on $\Gamma$, where $c$ is a nonzero constant dependent of $n$, and that $(\vartheta, r)$ satisfies (\ref{mult-cond}). Clearly, the induced twisted Jacobi structure on $\Gamma_0$ by $(\Lambda_{\Gamma},E_{\Gamma},\omega)$ is the initial given one.

\vspace{2mm}
\noindent
\textbf{2. The gauge groupoid of a principal line bundle over a twisted contact manifold.} Let $(\Gamma_0, \vartheta_0, \omega_0)$ be a $2n+1$-dimensional twisted contact manifold, $(\Lambda_0,E_0)$ the corresponding twisted Jacobi structure on $\Gamma_0$ defined by (\ref{reeb}) and (\ref{bivect-reeb}), and $f_0$ a first integral of $E_0$. Let, also, $\pi: P\to \Gamma_0$ be a principal line bundle over $\Gamma_0$, i.e., its structure group $G$ is a $1$-dimensional Lie group that acts freely on $P$ on the right and $P/G = \Gamma_0$, endowed with a connection $\mathfrak{H}$. Precisely, $\mathfrak{H}$ is a $G$-invariant distribution on $P$, complementary to its vertical bundle $\mathfrak{V}=\ker \pi_\ast$. We consider the gauge groupoid $\Gamma \overset{\alpha}{\underset{\beta}{\rightrightarrows}} \Gamma_0$ of $P$ which is the quotient $P\times P /G \rightrightarrows P/G$ of the pair groupoid $P \times P \overset{\pi_2}{\underset{\pi_1}{\rightrightarrows}} P$ of $P$, $\pi_i$, $i=1,2$, being the projection on the $i$-copy of $P$, by the right diagonal action of $G$ on $P\times P$. The natural projection $\varpi : P\times P \to P\times P /G$, $(p_1,p_2)\overset{\varpi}{\mapsto}[(p_1,p_2)]=\{(p_1g,p_2g)\,/\, g\in G\}$, is a Lie groupoid morphism over $\pi$. Thus, the source map $\alpha$ and the target map $\beta$ of $\Gamma$ are, respectively, the maps defined by $\alpha \circ \varpi = \pi \circ \pi_2$ and $\beta \circ \varpi= \pi \circ \pi_1$. Hence, for each $[(p_1,p_2)]\in \Gamma$, $\alpha([(p_1,p_2)])=\pi(p_2)$ and $\beta([(p_1,p_2)])=\pi(p_1)$, while the inversion $\iota : \Gamma \to \Gamma$ and the embedding $\varepsilon : \Gamma_0 \hookrightarrow \Gamma$ are given, respectively, by $\iota ([(p_1,p_2)]) = [(p_2,p_1)]$ and $\varepsilon(x) = [(p,p)]$, where $p\in \pi^{-1}(x)$. Moreover, the product of two composable elements $[(p_1,p_2)]$, $[(q_1,q_2)]$ of $\Gamma$, that means that $\alpha([(p_1,p_2)])=\pi(p_2)=\pi(q_1)=\beta([(q_1,q_2)])$, is $m([(p_1,p_2)],[(q_1,q_2)])=[(p_1,q_2g)]$, where $g$ is the unique element of $G$ for which $p_2 = q_1g$. The triple $(\vartheta,\omega,r)$, where
\begin{equation*}
\vartheta = \alpha^\ast \vartheta_0 - e^{-r}\beta^\ast \vartheta_0, \quad \quad \omega = \alpha^\ast \omega_0 - e^{-r}\beta^\ast \omega_0 \quad \quad \mathrm{and} \quad \quad r=\alpha^\ast f_0 - \beta^\ast f_0,
\end{equation*}
defines a $r$-multiplicative twisted contact structure on $\Gamma$, i.e., (\ref{eq-example}) and (\ref{mult-cond}) hold on $\Gamma$ and $\Gamma_2$, respectively. In order to present the tensors fields $\Lambda_{\Gamma}$ and $E_{\Gamma}$ that define the corresponding $(d\omega,\omega)$-twisted Jacobi structure on $\Gamma$, we recall that given a $q$-vector field $Q_0$ on $\Gamma_0$ there exists a unique $q$-vector field $Q_0^h$ on $P$, called the \emph{horizontal lift of $Q_0$ on $P$ with respect to $\mathfrak{H}$}, such that, for any $p\in P$, $Q_0^h(p)\in \bigwedge^q \mathfrak{H}_p$ and $Q_0^h$ is $\pi$-related to $Q_0$, i.e.,
\begin{equation*}
Q_0^h (\pi^\ast \zeta_1,\ldots,\pi^\ast \zeta_q) = Q_0(\zeta_1,\ldots,\zeta_q)\circ \pi, \quad \quad \mathrm{for}\;\mathrm{all}\; \zeta_1,\ldots,\zeta_q \in \Gamma(T^\ast \Gamma_0).
\end{equation*}
Because of the $G$-invariance of $\mathfrak{H}$, any $q$-vector field of type $Q_0^h + Q_0'^h$ on $P\times P$ descends to the quotient $P\times P /G$. Hence, we may easily verify that
\begin{equation*}
\Lambda_{\Gamma} = \varpi_\ast (-e^{\varpi^\ast r}\Lambda_0^h + \Lambda_0^h) \quad \quad \mathrm{and} \quad \quad E_{\Gamma}=\varpi_\ast(0+E_0^h).
\end{equation*}
Obviously, the induced twisted Jacobi structure on $\Gamma_0$ by $(\Lambda_{\Gamma},E_{\Gamma},\omega)$ is the initial given one.

\vspace{2mm}
\noindent
\textbf{3. An homogeneous twisted symplectic groupoid.} Let $(S,\omega)$ be a symplectic manifold, $\Pi = \Pi^\#(\omega)$ the corresponding nondegenerate Poisson tensor and $(\Gamma_0, t\Pi)$, $\Gamma_0=S\times \R$ and $t$ being the canonical coordinate on $\R$, the associated Heisenberg-Poisson manifold \cite{w-bull} to $S$. We suppose that $(\Gamma_0, t\Pi)$ is integrable \cite{w-bull} and we denote by $(\Gamma,\Omega)\overset{\alpha}{\underset{\beta}{\rightrightarrows}}\Gamma_0$ the symplectic groupoid which integrates it. Since $t\Pi$ is an exact Poisson structure \cite{duf-zung} (there is the vector field $T_0= -t\frac{1}{t}$ which satisfies $t\Pi = [t\Pi, T_0]$), the multiplicative symplectic form $\Omega$ on $\Gamma$ is also exact \cite{cr-fer-Luxem}. Precisely, the flow of $T_0$ integrates to the flow of a vector field $T$ on $\Gamma$ such that $\Omega=d(i(T)\Omega)$ \cite{cr-fer-Luxem}. So, $\Omega=d\sigma$, where $\sigma = i(T)\Omega$, and it is homogeneous with respect to $T$.

Now, we consider on $\Gamma_0$ a twisted Poisson structure $(\Lambda, \varphi)$ such that $\Lambda$ and $\varphi$ are projectable along the integral curves of $T_0$. (For example, we can take $\Lambda = f\Pi$ and $\varphi = -f^{-2}df\wedge \omega$ with $f$ a nonconstant function on $S$.) Let $\Omega_0$ be the corresponding nondegenerate twisted symplectic $2$-form on $T^\ast \Gamma_0$ \cite{royt, ca} which, in a local coordinate system $(x_1,\ldots,x_n,p_1,\ldots,p_n)$, $n=\dim \Gamma_0$, of $T^\ast \Gamma_0$, is written as
\begin{equation*}
\Omega_0 = \sum_{i=1}^n dp_i \wedge dx_i + \frac{1}{2}\sum_{i,j,k,l=1}^n p_i\lambda^{ij}\varphi_{jkl}dx_k\wedge dx_l,
\end{equation*}
where $\lambda^{ij}$ and $\varphi_{jkl}$ are, respectively, the local components of $\Lambda$ and $\varphi$. We remark that $\Omega_0$ is homogeneous with respect to the Liouville vector field $Z_0 = \sum_{i=1}^{n}p_i \frac{\partial}{\partial p_i}$ on $T^\ast \Gamma_0$.

On the other hand, we consider a left Lie groupoid action of $\Gamma$ on the canonical projection $q: T^\ast \Gamma_0 \to \Gamma_0$ with action map $\Phi : \Gamma \star T^\ast \Gamma_0 \to T^\ast \Gamma_0$, where $\Gamma \star T^\ast \Gamma_0 = \{(g,z)\in \Gamma \times T^\ast \Gamma_0 \,/ \, \alpha(g) = q(z)\}$, compatible with the structure maps of $\Gamma$, \cite{duf-zung}. Let $\Gamma\ltimes T^\ast \Gamma_0 \overset{\tilde{\alpha}}{\underset{\tilde{\beta}}{\rightrightarrows}}T^\ast\Gamma_0$ be the semi-direct product groupoid of $\Gamma$ with $T^\ast \Gamma_0$, \cite{duf-zung}. We have $\Gamma\ltimes T^\ast \Gamma_0 = \Gamma \star T^\ast \Gamma_0$, $\tilde{\alpha}(g,z)=z$, $\tilde{\beta}(g,z) = \Phi(g,z)$, $\tilde{\iota}(g,z)=(g^{-1},\Phi(g,z))$, $\tilde{\varepsilon}(z) = (\varepsilon(q(z)),z)$ and $\tilde{m}\big((g,z),(g',z')\big)=(m(g,g'),z')$ with $z=\Phi(g',z')$. The multiplicative nondegenerate $2$-form
\begin{equation*}
\tilde{\Omega} = \tilde{\alpha}^\ast \omega_0 - \tilde{\beta}^\ast \omega_0 + d\tilde{\eta},
\end{equation*}
where
\begin{equation*}
\omega_0 = \frac{1}{2}\sum_{i,j,k,l=1}^n p_i\lambda^{ij}\varphi_{jkl}dx_k\wedge dx_l \quad  \mathrm{and}  \quad \tilde{\eta} = \tilde{\alpha}^\ast(\sum_{i=1}^n p_idx_i)-\tilde{\beta}^\ast(\sum_{i=1}^n p_idx_i) + \sigma,
\end{equation*}
endows $\Gamma\ltimes T^\ast \Gamma_0$ with a $(\tilde{\alpha}^\ast d\omega_0 - \tilde{\beta}^\ast d\omega_0)$-twisted symplectic structure which is homogeneous with respect to the multiplicative vector field $(T+(T_0^\ast+Z_0), T_0^\ast+Z_0)$\footnote{We recall that the space tangent to $\Gamma\ltimes T^\ast \Gamma_0$ at a point $(g,z)$ is the vector subspace of $T_{(g,z)}(\Gamma \times T^\ast \Gamma_0)=T_g\Gamma \times T_zT^\ast \Gamma_0$, $\{(X,Y)\in T_g\Gamma \times T_zT^\ast \Gamma_0 \,/\, \alpha_\ast X(\alpha(g))=q_\ast Y(q(z))\}$ (see \cite{lm}, p. 345). Hence, we have that $T_g + (T_0^\ast + Z_0)_z$ is tangent to $\Gamma\ltimes T^\ast \Gamma_0$ at the point $(g,z)$ because $\alpha_\ast T (\alpha(g))=T_{0 \alpha(g)}=q_\ast(T_0^\ast+Z_0)(q(z))$. Also, we have $T\tilde{\alpha} \circ(T+(T_0^\ast+Z_0))(g,z)=(T_0^\ast+Z_0)_z =(T_0^\ast+Z_0)\circ \tilde{\alpha}(g,z)$ and $T\tilde{\beta} \circ(T+(T_0^\ast+Z_0))(g,z)=\Phi_\ast(T+(T_0^\ast+Z_0))(\Phi(g,z))=(T_0^\ast+Z_0)_{\Phi(g,z)}=(T_0^\ast+Z_0)\circ \tilde{\beta}(g,z)$, whence we obtain the multiplicativity of $(T+(T_0^\ast+Z_0), T_0^\ast+Z_0)$. The seconde equation of the above sequence is deduced from the compatibility condition $q\circ \Phi = \beta \circ p_1$ of $\Phi$ with $\beta$, where $p_1$ is the projection of $\Gamma\ltimes T^\ast \Gamma_0$ on $\Gamma$.} on $\Gamma\ltimes T^\ast \Gamma_0\rightrightarrows T^\ast \Gamma_0$. In above, $T_0^\ast$ denotes the vector field on $T^\ast \Gamma_0$ which, in the local coordinate system $(x_1,\ldots,x_n,p_1,\ldots,p_n)$ of $T^\ast \Gamma_0$, is written as $T_0^\ast = T_0 + \sum_{i = 1}^{n}0\frac{\partial}{\partial p_i}$.

\section{Integration of twisted Jacobi structures}\label{sect-integr}
In this section we study the inverse problem of that which was presented in Theorem \ref{th-ind-jac}. Specifically, we discuss the following question: \emph{Given a twisted Jacobi manifold $(M,\Lambda,E,\omega)$, does there exist a twisted contact groupoid, with manifold of units $M$, such that its associated Lie algebroid is isomorphic to $(T^\ast M\times \R,\{\cdot,\cdot\}^{\omega}, \pi \circ (\Lambda,E)^\#)$?} When this is the case, we say that $(M,\Lambda,E,\omega)$ is \emph{integrable}.

The above problem is a special case of the problem of integration of a Lie algebroid which asks for the existence of a Lie groupoid whose Lie algebroid is isomorphic to a given one. It was a longstanding problem of Differential Geometry which was solved, in 2003, by M. Crainic and R.L. Fernandes \cite{cr-fer-Lie}. However, the preceded works of J. Pradines \cite{pr} and R. Almeida with P. Molino \cite{almo} for Lie algebroids integrated by local Lie groupoids, K. Mackenzie \cite{mck1} for transitive Lie algebroids, P. Dazord \cite{dz1} and P. Dazord with G. Hector \cite{dzh} for the Lie algebroids structures on the cotangent bundles of regular Poisson manifolds, and A.S. Cattaneo with G. Felder \cite{cf} for the Lie algebroids structures on the cotangent bundles of arbitrary Poisson manifolds, had also an important contribution in its study.

In specific, M. Crainic and R.L. Fernandes \cite{cr-fer-Lie} have proved that with each given Lie algebroid $(A, [\cdot,\cdot],\rho)$ over a smooth manifold $M$, we can associate a topological groupoid $G(A)$, called the \emph{Weinstein groupoid of $A$}, with simply connected $\beta$-fibres\footnote{We defined the Lie algebroid $A(\Gamma)$ of a Lie groupoid $\Gamma \overset{\alpha}{\underset{\beta}{\rightrightarrows}}\Gamma_0$ as $A(\Gamma)=\ker \beta_\ast \cap T_{\Gamma_0}\Gamma$, while M. Crainic and R.L. Fernandes in \cite{cr-fer-Lie} identified $A(\Gamma)$ with $\ker \alpha_\ast \cap T_{\Gamma_0}\Gamma$. For this reason, we must construct $G(A)$ by changing the roles of $\alpha$ and $\beta$ in the construction of $G(A)$ presented in \cite{cr-fer-Lie}. On the other hand, we know that the inversion map of a Lie groupoid sends $\alpha$-fibers to $\beta$-fibers, and reciprocally. So, if the $\alpha$-fibers are simply connected, then the $\beta$-fibers are also simply connected, and reciprocally.}, which is the quotient of the space of $A$-paths in $A$ by an homotopy equivalence relation, and they have obtained necessary and sufficient conditions under which $G(A)$ is a Lie groupoid. Whenever this holds, the Lie algebroid of $G(A)$ is isomorphic to $A$.

\vspace{2mm}

Next, we will describe, in brief, the construction of the Weinstein groupoid $G(A)$ for a given Lie algebroid $(A, [\cdot,\cdot],\rho)$, and some properties of $G(A)$ acquired from the presence on $A$ of an $1$-cocycle of its Lie algebroid cohomology complex with trivial coefficients.

\subsection{The Weinstein groupoid}
\begin{definition}[\cite{cr-fer-Lie}]
Let $\pi: A\to M$ be a Lie algebroid and $\rho : A\to TM$ its anchor map. An \emph{$A$-path} of $A$ is a smooth map $c : I=[0,1]\to A$ of class $C^1$ such that, for all $t\in I$,
\begin{equation*}
\rho(c(t)) = \frac{d\gamma(t)}{dt},
\end{equation*}
where $\gamma(t)=\pi\circ c (t)$ is a path in $M$ and it is called the \emph{base path} of $c$. The set of all $A$-paths of $A$ is denoted by $P(A)$.
\end{definition}

\begin{definition}[\cite{cr-fer-Lie}]
An $A$-\emph{connection} on a vector bundle $E$ over $M$ is a bilinear map $\nabla : \Gamma(A)\times \Gamma(E) \to \Gamma(A)$, $(\zeta,\xi)\to \nabla_{\zeta}\xi$, such that, for any $f\in \C$ and $(\zeta,\xi)\in \Gamma(A)\times \Gamma(E)$,
\begin{equation*}
\nabla_{f\zeta}\xi = f\nabla_{\zeta}\xi  \quad \quad \mathrm{and} \quad \quad \nabla_{\zeta}(f\xi) = f\nabla_{\zeta}\xi+\mathcal{L}_{\rho(\zeta)}(f)\xi.
\end{equation*}
If $E=A$, we define the \emph{torsion} of the $A$-connection $\nabla$ on $A$ as the $\C$-bilinear map $T_{\nabla} : \Gamma(A)\times \Gamma(A) \to \Gamma(A)$ given, for any $\zeta_1,\zeta_2\in \Gamma(A)$, by
\begin{equation*}
T_{\nabla}(\zeta_1,\zeta_2) = \nabla_{\zeta_1}\zeta_2 - \nabla_{\zeta_2}\zeta_1 - [\zeta_1,\zeta_2].
\end{equation*}
\end{definition}

We note that with each standard $TM$-connection $\nabla$ on $A$, we can associate an obvious $A$-connection on $A$, denoted also by $\nabla$, by setting, for any $\zeta_1,\zeta_2\in \Gamma(A)$, $\nabla_{\zeta_1}\zeta_2 \equiv \nabla_{\rho(\zeta_1)}\zeta_2$. Let $\nabla$ be a such connection on $A$ and $\partial_t$ the induced derivative operator which associates to each $A$-path $c: I\to A$ the path $\partial_t c$ in $A$ which is the $\nabla$-horizontal component of $\frac{dc}{dt}$, i.e., if $\gamma : I\to M$ is the base path of $c$ and $\zeta$ is a time dependent section of $A$ such that $\zeta(t,\gamma(t))=c(t)$, then
\begin{equation*}
\partial_t c = \nabla_{\rho(c(t))}\zeta + \frac{d\zeta}{dt}.
\end{equation*}

\begin{definition}[\cite{cr-fer-Lie}]
An $A$-\emph{homotopy} is a family $c_{\epsilon}:I\to A$, $c_{\epsilon}(t)=c(\epsilon,t)$, of $A$-paths which depends on a parameter $\epsilon \in I$ in a $C^2$-fashion and which has the following properties: (i) their base paths $\gamma_{\epsilon}: I\to M$, $\gamma_{\epsilon}(t)=\gamma(\epsilon,t)$, have fixed ends points and (ii) the solution $b(\epsilon,t)$ of the equation
\begin{equation*}
\partial_tb - \partial_{\epsilon}c = T_{\nabla}(c,b),  \quad \quad b(\epsilon,0)=0,
\end{equation*}
satisfies, for all $\epsilon\in I$, $b(\epsilon,1)=0$.\footnote{We recall that the solution $b$ does not depend on $\nabla$, \cite{cr-fer-Lie}.}

Two $A$-paths $c_0$ and $c_1$ are said to be \emph{homotopic}, and we write $c_0 \sim c_1$, if there exists an $A$-homotopy $c: I\times I \to A$ joining them, i.e., $c(0,t)=c_0(t)$ and $c(1,t)=c_1(t)$.

For a given $A$-path $c$, we denote by $[c]$ the set of all homotopic $A$-paths to $c$.
\end{definition}

We define now the Weinstein groupoid $G(A)\overset{\alpha}{\underset{\beta}{\rightrightarrows}}M$ of $\pi: A\to M$ as the space of classes of homotopic paths of $P(A)$, i.e.,
\begin{equation*}
G(A) : = P(A)/\sim \,,
\end{equation*}
with the following structure maps. For any $[c]\in G(A)$, the source map $\alpha : G(A)\to M$ and the target map $\beta :G(A)\to M$ are given, respectively, by $\alpha([c])= \gamma(0)$ and $\beta([c])=\gamma(1)$, where $\gamma$ is the base path of $c$, while the inversion $\iota : G(A)\to G(A)$ maps $[c]$ to the class $[\bar{c}]$ of the opposite $A$-path $\bar{c}$ of $c$, i.e., for any $t\in [0,1]$, $\bar{c}(t) = -c(1-t)$, and the embedding $\varepsilon : M\hookrightarrow G(A)$ maps each point $x$ of $M$ to the class $[0_x]$ of the constant trivial path above $x$. The set $G_2(A)$ of composable pairs of $G(A)\times G(A)$ consists of the pairs of classes $([c_1],[c_0])$ whose representatives $(c_1,c_0)$ are pairs of composable $A$-paths, i.e., $\pi(c_1(0))=\pi(c_0(1))$, and the product map $m$ on $G_2(A)$ is given by $m([c_1],[c_0])=[c_1]\cdot[c_0] = [c_1\odot c_0]$, where $c_1\odot c_0$ is the concatenation of $(c_0,c_1)$\footnote{The path $c_1\odot c_0$ is only piecewise smooth. But, by choosing an appropriate cutoff function $\tau\in C^\infty(\R)$, we reparameterize $c_i$, $i=0,1$, by setting $c_i^{\tau}(t):= \tau'(t)c_i(\tau(t))$. Then, $c_1^{\tau}\odot c_0^{\tau}$ is a smooth $A$-path, homotopic to $c_1\odot c_0$. For details, see \cite{cr-fer-Lie}.} defined as follows
\begin{equation*}
c_1\odot c_0 (t)\equiv \left\{
\begin{array}{lc}
2c_0(2t),& 0\leq t\leq \frac{1}{2}, \\
\\

2c_1(2t-1),& \frac{1}{2}< t\leq 1.
\end{array}
\right.
\end{equation*}

\begin{theorem}[\cite{cr-fer-Lie}]\label{th-integ-Lie alg}
The groupoid $G(A)$ is a $\beta$-simply connected topological groupoid. Moreover, whenever $A$ is integrable, $G(A)$ admits a smooth structure which makes it into the unique $\beta$-simply connected Lie groupoid integrating $A$.
\end{theorem}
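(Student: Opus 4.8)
This is the main result of \cite{cr-fer-Lie}; I only indicate the strategy. The plan is to realise $G(A)$ as the leaf space of a canonical finite-codimension foliation on the (infinite-dimensional) Banach manifold of $A$-paths, and to pin down exactly when that leaf space is smooth. First I would give $P(A)$ the structure of a Banach manifold: fixing an auxiliary $TM$-connection on $A$ (used to define $\partial_t$), the $C^1$ $A$-paths form a Banach submanifold of the $C^1$ path space of $A$. On $P(A)$ one has a canonical foliation $\mathcal{F}$ of finite codimension $\dim M+\mathrm{rank}\,A$, whose tangent space at an $A$-path $c$ is spanned by the variations $b$ solving $\partial_t b-\partial_\epsilon c=T_\nabla(c,b)$ with $b(0)=b(1)=0$; equivalently, $\mathcal{F}$ is the foliation whose leaves are the orbits of the (infinitesimal) gauge action of time-dependent sections of $A$ compactly supported in the open interval. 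By the very definition of $A$-homotopy, two $A$-paths are homotopic exactly when they lie in the same leaf of $\mathcal{F}$; hence $\sim$ is an equivalence relation and $G(A)=P(A)/\mathcal{F}$, with the quotient topology, is a topological groupoid --- continuity of $\alpha,\beta,\iota,\varepsilon$ is immediate, and the concatenation $c_1\odot c_0$ descends to $\sim$-classes (concatenate the $A$-homotopies), so that $m$ is well defined and continuous.

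For the $\beta$-simple connectedness, fix $x\in M$ and observe that the space $P^x(A)$ of $A$-paths whose base path ends at $x$ is contractible: the formula $c_s(t)=s\,c(1-s+st)$ produces, for $s\in[0,1]$, a continuous family of $A$-paths in $P^x(A)$ joining $c$ to the trivial path $0_x$. Since $\beta^{-1}(x)=P^x(A)/\mathcal{F}$ is the quotient of a contractible space by a foliation with connected leaves, the homotopy exact sequence of the (Serre-fibration-like) quotient map forces $\beta^{-1}(x)$ to be connected and simply connected. For the smoothness assertion, recall that integrability means, by definition, that the monodromy groups $\mathcal{N}_x(A)\subset\ker\rho_x$ --- which record the variation of the $A$-homotopy class of short $A$-loops based at $x$ --- are locally uniformly discrete, equivalently that the Crainic--Fernandes obstructions vanish. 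Granting this, the plan is to build, around each leaf, a finite-dimensional submanifold of $P(A)$ transverse to $\mathcal{F}$ (a flow-out obtained from an $\exp$-type construction using the anchor) that meets every nearby leaf in exactly one point; local uniform discreteness of the $\mathcal{N}_x$ is precisely what makes this local bijection hold and the resulting charts on $P(A)/\mathcal{F}$ overlap smoothly and be Hausdorff. This promotes $G(A)$ to a Lie groupoid, $\alpha$ to a submersion, and $m,\iota,\varepsilon$ to smooth maps; reading the $\beta$-fibre tangent spaces at the units off the $\mathcal{F}$-transversals identifies $A(G(A))$ with $A$. Uniqueness then follows from Lie's second theorem for Lie algebroid morphisms: any $\beta$-simply connected Lie groupoid $\Gamma\rightrightarrows M$ with $A(\Gamma)\cong A$ receives a groupoid morphism $G(A)\to\Gamma$ integrating this isomorphism, and, being a local diffeomorphism which is a bijection on each $\beta$-fibre (both groupoids having connected, simply connected $\beta$-fibres), it must be an isomorphism.

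The only real obstacle I anticipate is the smoothness of $P(A)/\mathcal{F}$ treated above: showing that a finite-codimension foliation on the Banach manifold $P(A)$ has a smooth Hausdorff leaf space precisely when the monodromy groups are discrete. This is the technical heart of \cite{cr-fer-Lie} and needs a careful analysis of how the leaves of $\mathcal{F}$ wind around themselves; everything else --- the topological groupoid structure, the $\beta$-simple connectedness, and the uniqueness --- is essentially formal once the foliation description is in place.
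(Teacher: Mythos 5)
The paper gives no proof of this theorem at all: it is imported verbatim from \cite{cr-fer-Lie}, and the only commentary the paper adds is the remark, immediately after the statement, that $G(A)$ is the leaf space of a finite-codimension foliation $\mathcal{F}(A)$ on $P(A)$ defined by a Lie algebra action --- which is precisely the strategy your outline develops. Your sketch (Banach manifold of $A$-paths, foliation by gauge orbits of time-dependent sections vanishing at the endpoints, contractibility of the space of $A$-paths ending at $x$, smoothness of the leaf space governed by discreteness of the monodromy groups, uniqueness via Lie II on $\beta$-simply connected integrations) is a faithful summary of the Crainic--Fernandes argument and correctly identifies its technical heart, so it is consistent with the paper's treatment.
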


As M. Crainic and R.L. Fernandes have shown \cite{cr-fer-Lie}, $G(A)$ can be viewed as the leaf space of a foliation $\mathcal{F}(A)$ on $P(A)$, of finite codimension, whose leaves are just the classes of homotopic $A$-paths. In particular, $\mathcal{F}(A)$ may be defined by the orbits of a Lie algebra action on the space $\tilde{P}(A)$ of all $C^1$ curves $c: I \to A$ with base path $\gamma= \pi\circ c$ of class $C^2$, which contains $P(T^\ast M)$ as submanifold. An alternative description of $G(A)$, when $A$ is the Lie algebroid $T^\ast M$ of a twisted Poisson manifold $M$, is developed by A.S. Cattaneo and P. Xu \cite{cx}. By modifying the method introduced in \cite{cf} for Poisson manifolds, they have obtained $G(T^\ast M)$ by an appropriate symplectic reduction of the cotangent bundle $T^\ast P(M)$ of the space $P(M)$ of all paths on $M$ of class $C^2$. In fact, the above two descriptions of $G(T^\ast M)$ coincid since they use the same Lie algebra action and since $\tilde{P}(T^\ast M)$ is identified with $T^\ast P(M)$. For details, we can consult \cite{cr-fer-Poisson} and \cite{ca}.

\subsection{Effects of an $1$-cocycle of $A$ on $G(A)$}
Let $R$ be an $1$-cocycle in the Lie algebroid cohomology complex with trivial coefficients of $(A,[\cdot,\cdot],\rho)$, i.e., $R$ is a section of the dual vector bundle $A^\ast\to M$ of $A \to M$ such that, for any $\zeta,\eta \in \Gamma(A)$,
\begin{equation*}
\langle [\zeta,\eta], R\rangle = \mathcal{L}_{\rho(\zeta)}\langle \eta,R \rangle - \mathcal{L}_{\rho(\eta)}\langle \zeta,R \rangle,
\end{equation*}
and $\mathbf{ac}^R : \Gamma(A) \to \Gamma(T(M\times \R))$ the map given, for any $\zeta \in \Gamma(A)$, by
\begin{equation*}
\mathbf{ac}^R(\zeta) = \rho(\zeta)+\langle \zeta, R\rangle\frac{\partial}{\partial s},
\end{equation*}
$s$ being the canonical coordinate on the factor $\R$ of $M\times \R$. It is easy to show that $\mathbf{ac}^R$ is an action of $A$ on the fibered manifold $\varpi : \tilde{M}=M\times \R \to M$ in the sense of \cite{hmck}. Thus, the vector bundle $\tilde{A}=A\times \R \to M\times \R$, which is isomorphic \cite{im} to the pull-back bundle $\varpi^\ast A \to M\times \R$ of $A$ over $\varpi$, admits \cite{hmck} a Lie algebroid structure $([\cdot,\cdot]^R, \rho^R)$. Having identified $\Gamma(\tilde{A})$ with the set of the time-dependent sections of $A$, the bracket $[\cdot,\cdot]^R$ and the anchor map $\rho^R$ are defined, for any $\tilde{\zeta}, \tilde{\eta} \in \Gamma(\tilde{A})$, respectively, by
\begin{equation*}
[\tilde{\zeta},\tilde{\eta}]^{R} =
[\tilde{\zeta},\tilde{\eta}] + \langle\tilde{\zeta},R\rangle\frac{\partial\tilde{\eta}}{\partial s} - \langle\tilde{\eta},R\rangle\frac{\partial\tilde{\zeta}}{\partial s}
\hspace{3mm}\mathrm{and} \hspace{3mm} \rho^{R}(\tilde{\zeta}) = \rho(\tilde{\zeta})
+\langle\tilde{\zeta},R\rangle\frac{\partial}{\partial s}.
\end{equation*}

\vspace{1mm}

We denote by $G(\tilde{A})$ the Weinstein groupoid of $\tilde{A}$. We will examine the relationship which link $G(\tilde{A})$ with $G(A)$.

\vspace{2mm}

Any section $R$ of $A^\ast$ can be integrating over an $A$-path $c: I \to A$ by setting
\begin{equation}\label{def-integr}
\int_c R : = \int_0^1 \langle c(t), R(\pi(c(t)))\rangle dt.
\end{equation}
A basic property of the integral (\ref{def-integr}) is its invariance under an $A$-homotopy, if $R$ is an $1$-cocycle of $A$. It is proved in \cite{cr-fer-Poisson} for the $1$-cocycles of Lie algebroids coming from Poisson manifolds, i.e., for the Poisson vectors fields, but the same method of proof may be used for the $1$-cocycles of any Lie algebroid. Therefore, if $R$ is an $1$-cocycle of $A$ and $c_0$, $c_1$ are two homotopic $A$-paths, we have
\begin{equation*}
\int_{c_0}R = \int_{c_1}R.
\end{equation*}
From the above property we deduce that the map $r : P(A) \to \R$, $c \to \int_c R$, descends to a well defined map, also denoted by $r$, on the quotient space $G(A)$ by setting
\begin{equation*}
r([c]) : = \int_c R.
\end{equation*}
The additivity of integration with respect the concatenation of paths shows that $r$ is a multiplicative function on $G(A)$, i.e., for any $([c_1],[c_0])\in G_2(A)$,
\begin{equation*}
r([c_1]\cdot[c_0])= r([c_1])+r([c_0]).
\end{equation*}
Hence, we can construct the action groupoid $\widetilde{G(A)} \overset{\tilde{\alpha}}{\underset{\tilde{\beta}}{\rightrightarrows}}\tilde{M}$ associated to the left action $\mathbf{ac}^r$ of $G(A)$ on $\varpi : \tilde{M}= M\times \R \to M$ given by (\ref{act-grpd}). We have
\begin{proposition}[\cite{cr-zhu}]\label{prop-isomor}
(i) The topological groupoids $\widetilde{G(A)}$ and $G(\tilde{A})$ are isomorphic. (ii) $A$ is integrable if and only if $\tilde{A}$ is. In this case, the previous isomorphism is a Lie groupoid isomorphism.
\end{proposition}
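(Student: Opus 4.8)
The plan is to lift the comparison to the level of $A$-paths: I will show that $P(\tilde{A})$ is canonically $P(A)\times\R$, that the $\tilde{A}$-homotopy relation moves only the $P(A)$-factor, and that the quotient $G(\tilde{A})$ is therefore $G(A)\times\R$ equipped with exactly the twisted groupoid structure of the action groupoid $\widetilde{G(A)}$. First I would unravel the anchor condition for $(\tilde{A},[\cdot,\cdot]^{R},\rho^{R})$: since $\rho^{R}(\tilde\zeta)=\rho(\tilde\zeta)+\langle\tilde\zeta,R\rangle\frac{\partial}{\partial s}$ and the fibre of $\tilde{A}=A\times\R$ over $(x,s)$ is identified with $A_{x}$, an $\tilde{A}$-path over a base path $t\mapsto(\gamma(t),s(t))$ in $M\times\R$ is just a $C^{1}$ curve $c(t)\in A_{\gamma(t)}$ with $\rho(c(t))=\dot\gamma(t)$ and $\dot s(t)=\langle c(t),R(\gamma(t))\rangle$. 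Thus $c\in P(A)$, and $s$ is reconstructed from $c$ and the initial value $s(0)$ by $s(t)=s(0)+\int_{0}^{t}\langle c(u),R(\gamma(u))\rangle\,du$; in particular $s(1)=s(0)+\int_{c}R$, with $\int_{c}R$ as in (\ref{def-integr}). Hence $\tilde{c}\mapsto(c,s(0))$ is a bijection $P(\tilde{A})\to P(A)\times\R$ with continuous inverse.

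Next I would analyse $\tilde{A}$-homotopies with an adapted connection. Given a $TM$-connection $\nabla$ on $A$, put on $\tilde{A}\cong\varpi^{\ast}A$ the pullback connection, declared trivial in the $\frac{\partial}{\partial s}$-direction. A short computation then shows that, on time-dependent sections, its torsion as an $\tilde{A}$-connection equals $T_{\nabla}$ (the $R$-terms in $[\cdot,\cdot]^{R}$ and $\rho^{R}$ cancel), and that, choosing the representing sections independent of $s$, the induced $\partial_{t}$-operator and the horizontal part of $\partial_{\epsilon}\tilde{c}$ coincide with their $A$-counterparts along $\gamma$. So the equation $\partial_{t}b-\partial_{\epsilon}\tilde{c}=T_{\tilde\nabla}(\tilde{c},b)$, $b(\epsilon,0)=0$, defining an $\tilde{A}$-homotopy reduces verbatim to the $A$-homotopy equation for the family $c_{\epsilon}$, so $b(\epsilon,1)=0$ is equivalent to $c_{0}\sim c_{1}$ in $P(A)$. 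It remains to compare the base-endpoint conditions in $M\times\R$: fixed endpoints force $s(\epsilon,0)$ to be constant, and then $s(\epsilon,1)=s(\epsilon,0)+\int_{c_{\epsilon}}R$ is automatically constant because $\int_{c}R$ is invariant under $A$-homotopy when $R$ is an $1$-cocycle (the invariance recalled above from \cite{cr-fer-Poisson}). Hence $\tilde{c}_{0}\sim\tilde{c}_{1}$ iff $c_{0}\sim c_{1}$ and their initial $\R$-values agree, so the bijection of the previous paragraph descends to $G(\tilde{A})\cong G(A)\times\R$.

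Then I would identify $G(A)\times\R$ with $\widetilde{G(A)}=G(A)\star\tilde{M}$. Writing $r([c])=\int_{c}R$ as on $G(A)$, one reads off that the source of $[\tilde{c}]$ is $(\alpha([c]),s(0))$, its target is $(\beta([c]),s(0)+r([c]))$, the product of composable classes has $A$-part $c_{1}\odot c_{0}$ and initial value that of $\tilde{c}_{0}$, inversion reverses the base path, and the units are the constant trivial paths; these correspond, under the above bijection, exactly to the structure maps $\tilde\alpha,\tilde\beta,\tilde{m},\tilde\iota,\tilde\varepsilon$ of the action groupoid attached to $\mathbf{ac}^{r}$ by (\ref{act-grpd}), the multiplicativity of $r$ (i.e. additivity of $\int R$ under concatenation) being what makes $\tilde{m}$ come out right. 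Since the bijection and its inverse are continuous, this proves (i). For (ii), if $A$ is integrable then by Theorem \ref{th-integ-Lie alg} $G(A)$ is a $\beta$-simply connected Lie groupoid, so $\widetilde{G(A)}=G(A)\star\tilde{M}$ — whose underlying manifold is $G(A)\times\R$ — is a Lie groupoid, hence so is $G(\tilde{A})$ by (i), and Theorem \ref{th-integ-Lie alg} then makes $\tilde{A}$ integrable with $G(\tilde{A})$ its $\beta$-simply connected integration, so the isomorphism in (i) is one of Lie groupoids. Conversely, if $\tilde{A}$ is integrable then $G(\tilde{A})\cong\widetilde{G(A)}$ is a Lie groupoid over $M\times\R$, and restricting its source map over the embedded submanifold $M\times\{0\}$ exhibits $G(A)$ as an embedded submanifold, hence a Lie groupoid, so $A$ is integrable.

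The main obstacle is the reduction in the second step: one must verify carefully that the $b$-equation defining $\tilde{A}$-homotopy collapses to its $A$-analogue, i.e. that the extra $\frac{\partial}{\partial s}$-direction and the $R$-dependent corrections in $\rho^{R}$ and $[\cdot,\cdot]^{R}$ add no new constraint — this is precisely where the connection and the representing sections must be chosen so the corrections cancel. Once this is in place, the invariance of $\int_{c}R$ under $A$-homotopy takes care of the endpoint matching, and everything else is routine bookkeeping with the groupoid structure maps.
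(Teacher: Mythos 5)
The paper itself gives no proof of this proposition (it is imported from \cite{cr-zhu}), so your attempt can only be measured against the standard argument, which your part (i) essentially reproduces: the identification $P(\tilde{A})\cong P(A)\times\R$ via $\tilde{c}\mapsto(c,s(0))$, the reduction of the $\tilde{A}$-homotopy equation to the $A$-one by means of the pullback connection and $s$-independent representing sections, and the matching of the endpoint condition in the $\R$-direction through the homotopy invariance of $\int_c R$; this part is sound. One wrinkle you should not claim away: by your own computation the target of $[\tilde{c}]$ is $(\beta([c]),s(0)+r([c]))$, whereas formula (\ref{act-grpd}) gives $\tilde{\beta}(g,s)=(\beta(g),s-r(g))$, so your bijection identifies $G(\tilde{A})$ with the action groupoid of $\mathbf{ac}^{-r}$, not \emph{exactly} with $\widetilde{G(A)}$ as defined in the text; one must either compose with $(g,s)\mapsto(g,-s)$ or trace the sign to the paper's interchange of $\alpha$ and $\beta$ relative to \cite{cr-fer-Lie}. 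This is harmless for the statement but should be made explicit.

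Part (ii) has genuine gaps. In the forward direction the step that actually certifies integrability of $\tilde{A}$ is missing: you must check that $r$ is smooth on the Lie groupoid $G(A)$ (the projection $P(A)\to G(A)$ is a submersion in the integrable case) and that the Lie algebroid of the action Lie groupoid $\widetilde{G(A)}$ is precisely $(\tilde{A},[\cdot,\cdot]^{R},\rho^{R})$, i.e.\ that the multiplicative function $r([c])=\int_c R$ differentiates to the cocycle $R$. Transporting ``is a Lie groupoid'' through the topological isomorphism of (i) and then citing Theorem \ref{th-integ-Lie alg} does not suffice: that theorem presupposes integrability, and the Crainic--Fernandes criterion concerns the quotient smooth structure on $P(\tilde{A})/\!\sim$, not an abstract homeomorphism with a manifold. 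The converse direction fails as written: $\tilde{\alpha}^{-1}(M\times\{0\})=G(A)\times\{0\}$ is \emph{not} a subgroupoid of $\widetilde{G(A)}$, since $(g,0)$ and $(h,0)$ are composable there only when $r(h)=0$; so exhibiting it as an embedded submanifold does not give $G(A)$ a Lie groupoid structure, and even a smooth structure on the set $G(A)$ would still have to be shown to be the leaf-space structure of $P(A)/\!\sim$ before one may conclude that $A$ is integrable. A clean repair, which handles both directions at once, is to push your step one level up: under your diffeomorphism $P(\tilde{A})\cong P(A)\times\R$ the foliation $\mathcal{F}(\tilde{A})$ is the product of $\mathcal{F}(A)$ with the foliation of $\R$ by points, so the quotient $G(\tilde{A})$ is smooth in the sense of \cite{cr-fer-Lie} if and only if $G(A)$ is; and when both are smooth, the bijection of (i), being induced by this diffeomorphism of path spaces through submersive quotient maps, is automatically a Lie groupoid isomorphism (alternatively, one compares monodromy groups as in \cite{cr-zhu}).
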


\subsection{Integration of twisted Jacobi structures}
Let $(M,\Lambda,E,\omega)$ be a twisted Jacobi manifold, $\big(T^*M\times \R,\{\cdot,\cdot\}^{\omega}, \pi \circ (\Lambda,E)^\#, (-E,0)\big)$ its associated Lie algebroid with $1$-cocycle and $G(T^*M\times \R)$ the Weinstein groupoid of $T^*M\times \R$. Let, also, $(\tilde{M},\tilde{\Lambda},\tilde{\omega})$ be the twisted Poissonization of $(M,\Lambda,E,\omega)$, i.e., $\tilde{M}=M\times \R$, $\tilde{\Lambda} = e^{-s}(\Lambda + \frac{\partial}{\partial s}\wedge E)$ and $\tilde{\omega} =e^s\omega$, $(\lcf \cdot,\cdot\rcf^{d\tilde{\omega}},\tilde{\Lambda}^\#)$ the corresponding Lie algebroid structure on $T^\ast \tilde{M}$, and $G(T^\ast \tilde{M})$ the Weinstein groupoid of $T^\ast \tilde{M}$. By applying the results of the previous subsection to the case where $(A,R)$ is the Lie algebroid with $1$-cocycle $(T^*M\times \R, (-E,0))$ and taking into account that $(T^*M\times \R)\times \R\cong T^*(M\times \R)$, we get, as direct consequence of Proposition \ref{prop-isomor}, the following
\begin{proposition}\label{prop-is-jac-hpois}
(i) There is an isomorphism of topological groupoids
\begin{equation}\label{isom-grpd}
\widetilde{G(T^\ast M \times \R)}\cong G(T^\ast \tilde{M}).
\end{equation}
(ii) The Lie algebroid $T^*M\times \R$ is integrable if and only if $T^\ast \tilde{M}$ is.
\end{proposition}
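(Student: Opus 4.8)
The plan is to obtain this statement as a specialisation of Proposition \ref{prop-isomor} to the Lie algebroid $A=T^\ast M\times\R$ of the twisted Jacobi manifold $(M,\Lambda,E,\omega)$, carrying the bracket $\{\cdot,\cdot\}^{\omega}$, the anchor $\pi\circ(\Lambda,E)^\#$, and the $1$-cocycle $R=(-E,0)$ of its Lie algebroid cohomology complex with trivial coefficients recalled in Section \ref{sect-tj}. First I would invoke the construction of the previous subsection: for $(A,R)$ as above, the vector bundle $\tilde{A}=A\times\R\to M\times\R=\tilde{M}$ acquires the Lie algebroid structure $([\cdot,\cdot]^{R},\rho^{R})$; and, by \cite{im}, $\tilde{A}=(T^\ast M\times\R)\times\R$ is canonically isomorphic, as a vector bundle over $\tilde{M}$, to $T^\ast(M\times\R)=T^\ast\tilde{M}$.

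The crux is then to verify that this identification carries $([\cdot,\cdot]^{R},\rho^{R})$ onto the Lie algebroid structure $(\lcf\cdot,\cdot\rcf^{d\tilde{\omega}},\tilde{\Lambda}^{\#})$ attached to the twisted Poisson manifold $(\tilde{M},\tilde{\Lambda},d\tilde{\omega})$, where $\tilde{\Lambda}=e^{-s}(\Lambda+\frac{\partial}{\partial s}\wedge E)$ and $\tilde{\omega}=e^s\omega$, that is, the twisted Poissonization of $(M,\Lambda,E,\omega)$ of Proposition \ref{poissonization}. This is the infinitesimal shadow of Proposition \ref{poissonization}: working in a chart $(x^1,\ldots,x^n,s)$ of $\tilde{M}$, the factor $e^{-s}$ in $\tilde{\Lambda}$ and the factor $e^s$ in $\tilde{\omega}$ are precisely what generates the extra $\langle\cdot,R\rangle\frac{\partial}{\partial s}$ terms occurring in $\rho^{R}$ and in $[\cdot,\cdot]^{R}$, so that the two anchors and the two brackets agree (after the natural rescaling built into the isomorphism of \cite{im}). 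I would carry out this comparison of anchors and of brackets in local coordinates, or else quote it from the study of the twisted Poissonization in \cite{jf}. I expect this identification of Lie algebroid structures to be the only non-formal step, and hence the main obstacle.

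Granting it, $\tilde{A}$ and $T^\ast\tilde{M}$ are one and the same Lie algebroid, so their Weinstein groupoids coincide, $G(\tilde{A})=G(T^\ast\tilde{M})$, and $\tilde{A}$ is integrable if and only if $T^\ast\tilde{M}$ is. Part (i) then follows from Proposition \ref{prop-isomor}(i): $\widetilde{G(T^\ast M\times\R)}=\widetilde{G(A)}\cong G(\tilde{A})=G(T^\ast\tilde{M})$. Part (ii) follows from Proposition \ref{prop-isomor}(ii): $A=T^\ast M\times\R$ is integrable if and only if $\tilde{A}$ is, equivalently if and only if $T^\ast\tilde{M}$ is. Thus, beyond the cocycle identification, nothing further is required.
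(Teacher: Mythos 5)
Your proposal follows essentially the same route as the paper: the paper obtains the proposition as a direct consequence of Proposition \ref{prop-isomor} applied to the pair $(A,R)=(T^\ast M\times \R,(-E,0))$, together with the identification $(T^\ast M\times \R)\times \R\cong T^\ast(M\times\R)$ carrying $([\cdot,\cdot]^{R},\rho^{R})$ to the Lie algebroid structure of the twisted Poissonization (a fact the paper quotes from \cite{im}/\cite{jf} rather than reproving). Your explicit flagging and verification of that identification is exactly the step the paper leaves implicit, so the argument is correct and matches the paper's proof.
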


We now establish, via Proposition \ref{prop-tcg-tsg}, the inverse of Theorem \ref{th-ind-jac}.
\begin{theorem}\label{integr-tj}
An integrable twisted Jacobi manifold $(M,\Lambda,E,\omega)$ is integrated by a twisted contact groupoid.
\end{theorem}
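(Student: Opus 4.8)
The plan is to pass to the homogeneous twisted Poisson side by twisted Poissonization, integrate there, and carry the integration back through Proposition~\ref{prop-tcg-tsg}.

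First I would form the twisted Poissonization $(\tilde{M},\tilde{\Lambda},\tilde{\omega},\partial/\partial s)$ of $(M,\Lambda,E,\omega)$ as in Proposition~\ref{poissonization}, with $\tilde{M}=M\times\R$, $\tilde{\Lambda}=e^{-s}(\Lambda+\frac{\partial}{\partial s}\wedge E)$, $\tilde{\omega}=e^s\omega$ and homothety vector field $\partial/\partial s$. Since $(M,\Lambda,E,\omega)$ is integrable, the Lie algebroid $T^{\ast}M\times\R$ is integrable, so by Proposition~\ref{prop-is-jac-hpois}(ii) the Lie algebroid $T^{\ast}\tilde{M}$ is integrable too; hence its Weinstein groupoid $\Sigma:=G(T^{\ast}\tilde{M})$ is a Lie groupoid (Theorem~\ref{th-integ-Lie alg}). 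Adapting Cattaneo--Xu's integration theorem for twisted Poisson manifolds \cite{cx} (as invoked for Theorem~\ref{th-ind-hpois}), $\Sigma$ is a twisted symplectic groupoid integrating $(\tilde{M},\tilde{\Lambda},d\tilde{\omega})$, carrying a multiplicative nondegenerate $2$-form $\Omega$ with $d\Omega=\tilde{\alpha}^{\ast}d\tilde{\omega}_0-\tilde{\beta}^{\ast}d\tilde{\omega}_0$ for $\tilde{\omega}_0=e^s\omega$ on $\tilde{M}$; as the twist is exact, $\Sigma$ is an \emph{exact} twisted symplectic groupoid and $\Omega=\tilde{\alpha}^{\ast}\tilde{\omega}_0-\tilde{\beta}^{\ast}\tilde{\omega}_0+d\eta$. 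Finally, the isomorphism $\widetilde{G(T^{\ast}M\times\R)}\cong G(T^{\ast}\tilde{M})$ of Proposition~\ref{prop-is-jac-hpois}(i), which in the integrable case is a Lie groupoid isomorphism, identifies $\Sigma$ with the action groupoid $\Gamma\times\R\rightrightarrows M\times\R$ of (\ref{act-grpd}), where $\Gamma:=G(T^{\ast}M\times\R)$ is the $\beta$-simply connected Lie groupoid integrating $T^{\ast}M\times\R$, equipped with the multiplicative function $r$ attached to the $1$-cocycle $(-E,0)$.

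The central step, and the one I expect to be the main obstacle, is to show that $\Sigma$ is a \emph{homogeneous} twisted symplectic groupoid. On $\Gamma\times\R$ there is the vector field $\partial/\partial s$ along the $\R$-factor; as in the computation in the proof of Proposition~\ref{prop-tcg-tsg}, $(\partial/\partial s,\partial/\partial s)$ is a multiplicative vector field on $\Gamma\times\R\rightrightarrows M\times\R$, and $i(\partial/\partial s)d\tilde{\omega}_0=\tilde{\omega}_0$ holds trivially since $\tilde{\omega}_0=e^s\omega$. What must be proved is $\mathcal{L}_{\partial/\partial s}\Omega=\Omega$, and this is the homogeneous counterpart of Theorem~\ref{th-ind-hpois} read backwards: the flow $(x,s)\mapsto(x,s+t)$ of $\partial/\partial s$ on $\tilde{M}$ consists of conformal twisted Poisson diffeomorphisms of $(\tilde{M},\tilde{\Lambda},d\tilde{\omega})$, rescaling $\tilde{\Lambda}$ by $e^{t}$ and $d\tilde{\omega}$ by $e^{-t}$ --- which is exactly the content of $\mathcal{L}_{\partial/\partial s}\tilde{\Lambda}=-\tilde{\Lambda}$ together with $\tilde{\omega}=i(\partial/\partial s)d\tilde{\omega}$ --- so by functoriality of the Weinstein groupoid construction it lifts to a flow $\Psi_t$ of Lie groupoid automorphisms of $\Sigma$ with $\Psi_t^{\ast}\Omega=e^{t}\Omega$, whence $\mathcal{L}_{\partial/\partial s}\Omega=\Omega$. (As a cross-check: $\mathcal{L}_{\partial/\partial s}\Omega$ is closed and multiplicative, and $\alpha_{\ast}(\Omega^{-1})=\tilde{\Lambda}$ forces the only constant $c$ with $\mathcal{L}_{\partial/\partial s}\Omega=c\,\Omega$ compatible with $\mathcal{L}_{\partial/\partial s}\tilde{\Lambda}=-\tilde{\Lambda}$ to be $c=1$.) Thus $(\Gamma\times\R,\Omega,d\tilde{\omega}_0,(\partial/\partial s,\partial/\partial s))$ is a homogeneous twisted symplectic groupoid.

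Then the ``reciprocally'' direction of Proposition~\ref{prop-tcg-tsg} applies: using $\Omega=\tilde{\alpha}^{\ast}\tilde{\omega}_0-\tilde{\beta}^{\ast}\tilde{\omega}_0+d\eta$ and $\mathcal{L}_{\partial/\partial s}\Omega=\Omega$, one reads off $i(\partial/\partial s)\Omega=e^s\vartheta$ for a $1$-form $\vartheta$ on $\Gamma$ and $i(\partial/\partial s)d\Omega=e^s(\alpha^{\ast}\omega-e^{-r}\beta^{\ast}\omega)$, so that $\Omega=d(e^s\vartheta)+e^s(\alpha^{\ast}\omega-e^{-r}\beta^{\ast}\omega)$ and $\big(\Gamma,\vartheta,\alpha^{\ast}\omega-e^{-r}\beta^{\ast}\omega,r\big)$ is a twisted contact groupoid over $M$. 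It remains to see it integrates $(M,\Lambda,E,\omega)$. Let $(\Lambda_0,E_0,\omega)$ denote the twisted Jacobi structure it induces on $M$ by Theorem~\ref{th-ind-jac}(1). By Proposition~\ref{prop-grpd-basemanif}, its twisted Poissonization equals the homogeneous twisted Poisson structure induced on $M\times\R$ by $\Sigma$; the latter is $(\tilde{\Lambda},d\tilde{\omega},\partial/\partial s)$ because $\Sigma$ integrates it (Theorem~\ref{th-ind-hpois}(1)), which by Proposition~\ref{poissonization} is also the twisted Poissonization of $(\Lambda,E,\omega)$. Since twisted Poissonization is injective on twisted Jacobi structures (last sentence of Proposition~\ref{poissonization}), $(\Lambda_0,E_0)=(\Lambda,E)$, and Theorem~\ref{th-ind-jac}(2) then furnishes a Lie algebroid isomorphism between $(T^{\ast}M\times\R,\{\cdot,\cdot\}^{\omega},\pi\circ(\Lambda,E)^{\#})$ and $A(\Gamma)$ --- which is exactly the assertion that $(M,\Lambda,E,\omega)$ is integrated by the twisted contact groupoid just constructed. (Consistently, $A(\Gamma)=T^{\ast}M\times\R$ already because $\Gamma=G(T^{\ast}M\times\R)$, by Theorem~\ref{th-integ-Lie alg}.)
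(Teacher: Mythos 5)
Your proposal is correct and follows essentially the same route as the paper: twisted Poissonization, transfer of integrability via Proposition \ref{prop-is-jac-hpois}, the Cattaneo--Xu twisted symplectic form on $G(T^\ast\tilde{M})\cong\widetilde{G(T^\ast M\times\R)}$, homogeneity with respect to $\frac{\partial}{\partial s}$, and then the reciprocal direction of Proposition \ref{prop-tcg-tsg} together with Proposition \ref{prop-grpd-basemanif} to recover $(\Lambda,E,\omega)$ on the units. The only (minor) difference is at the homogeneity step, where you invoke functoriality of the Weinstein construction to get $\Psi_t^\ast\Omega=e^t\Omega$, whereas the paper derives $\mathcal{L}_{\frac{\partial}{\partial s}}\tilde{\Omega}=\tilde{\Omega}$ by pushing the inverse bivector $\tilde{\Lambda}_G$ along the translation flow and using its $\tilde{\alpha}$-projectability onto $\tilde{\Lambda}$ --- which is exactly the argument in your parenthetical cross-check.
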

\begin{proof}
The assumption $(M,\Lambda,E,\omega)$ is integrable means that the corresponding Lie algebroid $(T^*M\times \R,\{\cdot,\cdot\}^{\omega}, \pi \circ (\Lambda,E)^\#)$ is integrable. Consequently, according to Proposition \ref{prop-is-jac-hpois}, $(T^\ast \tilde{M},\lcf \cdot,\cdot\rcf^{d\tilde{\omega}},\tilde{\Lambda}^\#)$ is also integrable; fact which implies that $(\tilde{M},\tilde{\Lambda},\tilde{\omega},\frac{\partial}{\partial s})$ is integrable as homogeneous twisted Poisson manifold. Crainic's and Fernandes's Theorem show that $T^\ast M \times \R$ (resp. $T^\ast \tilde{M}$) is integrated by $G(T^\ast M \times \R)\overset{\alpha}{\underset{\beta}{\rightrightarrows}}M$ (resp. $G(T^\ast \tilde{M})$) which is the unique target-simply connected Lie groupoid integrating it. Furthermore, Cattaneo's and Xu's Theorem in \cite{cx} assures us that $G(T^\ast \tilde{M})$ is endowed with a non-degenerate, multiplicative, twisted symplectic form $\tilde{\Omega}$. Because of (\ref{isom-grpd}), it induces on $\widetilde{G(T^\ast M \times \R)}\overset{\tilde{\alpha}}{\underset{\tilde{\beta}}{\rightrightarrows}}\tilde{M}$ a non-degenerate, multiplicative, $(\tilde{\alpha}^\ast d\tilde{\omega} - \tilde{\beta}^\ast d\tilde{\omega})$-twisted symplectic form denoted, also, by $\tilde{\Omega}$. From construction, $(\frac{\partial}{\partial s},\frac{\partial}{\partial s})$ is a multiplicative vector field on $\widetilde{G(T^\ast M \times \R)} = G(T^\ast M \times \R) \times \R$ whose the corresponding vector field $\frac{\partial}{\partial s}$ on the base manifold $\tilde{M}=M\times \R$ is the homothety vector field of the $d\tilde{\omega}$-twisted Poisson structure $\tilde{\Lambda}$. We need to show that $\tilde{\Omega}$ is homogeneous with respect to $\frac{\partial}{\partial s}$.

The multiplicativity of $\frac{\partial}{\partial s}$ on $\widetilde{G(T^\ast M \times \R)}=G(T^\ast M \times \R) \times \R$ implies that its flows
\begin{eqnarray*}
\Psi_u : G(T^\ast M \times \R) \times \R & \longrightarrow & G(T^\ast M \times \R) \times \R \\
(g, s) & \longrightarrow & (g, s+u)
\end{eqnarray*}
are (local) Lie groupoid automorphisms over the transformations $\psi_u : M\times \R \to M \times \R$ defined by $\psi_u (x,s)=(x,s+u)$. Thus, $\tilde{\alpha}\circ \Psi_u^{-1} = \psi _u^{-1} \circ \tilde{\alpha}$. Let $\tilde{\Lambda}_G$ be the $(\tilde{\alpha}^\ast d\tilde{\omega} - \tilde{\beta}^\ast d \tilde{\omega})$-twisted Poisson structure on $\widetilde{G(T^\ast M \times \R)}$ defined by the inversion of $\tilde{\Omega}$. From Theorem 2.6 of \cite{cx}, we have that $\tilde{\Lambda}_G$ is projectable on $\tilde{M}$ via $\tilde{\alpha}_\ast$ and its projection is the $d\tilde{\omega}$-twisted Poisson structure $\tilde{\Lambda}$ which is homogeneous with respect to $\frac{\partial}{\partial s}$. On the other hand, we have $\psi _u^\ast \tilde{\Lambda} = (\psi _u^{-1})_\ast \tilde{\Lambda} = e^{-u}\tilde{\Lambda}$. Consequently, $(\tilde{\alpha}_\ast \circ (\Psi_u^{-1})_\ast)\tilde{\Lambda}_G = ((\psi _u^{-1})_\ast \circ \tilde{\alpha}_\ast)\tilde{\Lambda}_G \Leftrightarrow \tilde{\alpha}_\ast ((\Psi_u^{-1})_\ast \tilde{\Lambda}_G)=e^{-u}\tilde{\Lambda}$, whence, taking into account the expression of $\Psi_u$, we obtain $(\Psi_u^{-1})_\ast \tilde{\Lambda}_G = e^{-u}\tilde{\Lambda}_G$. Hence, $\mathcal{L}_{\frac{\partial}{\partial s}}\tilde{\Lambda}_G = \frac{d}{du}(\Psi_u^\ast \tilde{\Lambda}_G)\vert_{u=0} = \frac{d}{du}(e^{-u}\tilde{\Lambda}_G)\vert_{u=0} = -\tilde{\Lambda}_G$, which means that $\tilde{\Lambda}_G$ is homogeneous with respect to $\frac{\partial}{\partial s}$. For this, its inverse $\tilde{\Omega}$ is also homogeneous with respect to $\frac{\partial}{\partial s}$, i.e., $\mathcal{L}_{\frac{\partial}{\partial s}}\tilde{\Omega} = \tilde{\Omega}$.

We remark that $\tilde{\Omega}$, as an exact twisted symplectic form, can be written as $\tilde{\Omega} = \tilde{\alpha}^\ast \tilde{\omega} - \tilde{\beta}^\ast \tilde{\omega} + d\tilde{\eta}$, where $\tilde{\eta}$ is an $1$-form on $\widetilde{G(T^\ast M \times \R)}$ such that $d\tilde{\eta}$ is multiplicative. Since the part $\tilde{\alpha}^\ast \tilde{\omega} - \tilde{\beta}^\ast \tilde{\omega}$ of $\tilde{\Omega}$ is homogeneous with respect to $\frac{\partial}{\partial s}$, it is clear that the homogeneity of $\tilde{\Omega}$ implies that of $d\tilde{\eta}$, i.e., $\mathcal{L}_{\frac{\partial}{\partial s}}d\tilde{\eta} = d\tilde{\eta} \Leftrightarrow d(i(\frac{\partial}{\partial s})d\tilde{\eta}) = d\tilde{\eta}$. We set $\tilde{\vartheta} = i(\frac{\partial}{\partial s})d\tilde{\eta}$ and we check that $\mathcal{L}_{\frac{\partial}{\partial s}}\tilde{\vartheta} = \tilde{\vartheta}$. So, we have $\tilde{\vartheta} = e^s \vartheta$, where $\vartheta$ is a $1$-form on $G(T^\ast M \times \R)$, and $d\tilde{\vartheta} = d\tilde{\eta}$. Therefore, $\tilde{\Omega} = \tilde{\alpha}^\ast \tilde{\omega} - \tilde{\beta}^\ast \tilde{\omega} + d\tilde{\vartheta} = e^s(\alpha^\ast \omega - e^{-r}\beta^\ast \omega) + d(e^s \vartheta)$. According to Proposition \ref{prop-tcg-tsg}, we conclude that $(\vartheta, \alpha^\ast \omega - e^{-r}\beta^\ast \omega)$ defines a twisted contact structure on $G(T^\ast M \times \R)$ whose induced twisted Jacobi structure on the manifolds of units is the initial given one (see, Proposition \ref{prop-grpd-basemanif})\end{proof}

\end{document}